\documentclass[12pt,a4paper]{article}
\usepackage{color}
\usepackage[colorlinks=true]{hyperref}
\usepackage[margin=1in]{geometry}
\usepackage{fullpage,mathrsfs,graphicx,framed,color,amssymb,amsmath,amsthm}
\usepackage[boxed, noline, ruled, linesnumbered]{algorithm2e}
\usepackage{epsfig, graphicx}
\usepackage{latexsym,amsfonts,amsbsy,amssymb}
\usepackage{amsmath,amsthm}
\usepackage{enumerate}
\usepackage{cleveref}
\usepackage{tikz}
\usepackage{stmaryrd}
\usetikzlibrary{positioning,calc,decorations.pathreplacing,arrows.meta}
\usepackage{siunitx}
\usepackage{array}
\usepackage{diagbox} 
\usepackage{bm} 
\usepackage{longtable}
\usepackage{caption}
\captionsetup{font=scriptsize} 
\usepackage[backend=biber, style=numeric]{biblatex}

\addbibresource{references.bib}

\newtheorem{thm}{Theorem}
\newtheorem{Def}{Definition}
\newtheorem{cor}{Corollary}
\newtheorem{lem}{Lemma}

\newtheorem{problem}{Problem}
\numberwithin{thm}{section}
\numberwithin{Def}{section}
\numberwithin{cor}{section}
\numberwithin{lem}{section}
\numberwithin{exm}{section}
\numberwithin{rem}{section}
\numberwithin{equation}{section} 

\usepackage{changes} 
\setlength{\marginparwidth}{2cm}
\makeatletter 

\@addtoreset{equation}{section}
\makeatother 
\allowdisplaybreaks 

\newtheorem{remark}{Remark}[section]

\allowdisplaybreaks 



\begin{document}
\title{Continuous Finite Element Method For Maxwell Eigenvalue Problems With 
Regular Decomposition Technique\footnote{This work was supported by 
the Strategic Priority Research Program of the Chinese Academy of 
Sciences (XDB0640000, XDB0640300),  National Key Laboratory of Computational Physics 
(No. 6142A05230501), National Natural Science Foundations of 
China (NSFC 1233000214), National Center for Mathematics and Interdisciplinary Science, CAS.}}
\author{Feiyi Liao\footnote{SKLMS, Academy of Mathematics and Systems Science,
Chinese Academy of Sciences, No.55, Zhongguancun Donglu, 
Beijing 100190, China, and School of
Mathematical Sciences, University of Chinese Academy
of Sciences, Beijing, 100049, China (liaofeiyi@amss.ac.cn).},\ \ \
Haochen Liu\footnote{SKLMS, Academy of Mathematics and Systems Science,
Chinese Academy of Sciences, No.55, Zhongguancun Donglu, 
Beijing 100190, China, and School of
Mathematical Sciences, University of Chinese Academy
of Sciences, Beijing, 100049, China (liuhaochen@lsec.cc.ac.cn).}\ \ \
and \ \ Hehu Xie\footnote{SKLMS, Academy of Mathematics and Systems Science,
Chinese Academy of Sciences, No.55, Zhongguancun Donglu, Beijing 100190, 
China, and School of Mathematical Sciences, University of Chinese Academy
of Sciences, Beijing, 100049, China (hhxie@lsec.cc.ac.cn).}}
\date{}
\maketitle
\begin{abstract}
With the regular decomposition technique, 
we decompose the space $\mathbf{H}_0^s(\mathbf{curl}; \Omega)$ 
into the sum of a vector potential space and the gradient of a scalar space, both possessing higher regularity. 
Based on this new high order regular decomposition, 
a novel numerical method using standard high order 
Lagrange finite elements is designed for solving Maxwell 
eigenvalue problems. 
Specifically, the full convergence orders of the eigenpair approximations are proved for the proposed numerical method. 
Finally, numerical examples are provided to validate the proposed scheme and confirm the theoretical convergence results.
\vskip0.3cm {\bf Keywords.} Maxwell eigenvalue problem,
high order regular decomposition, continuous Lagrange finite element, convergence analysis. 
\vskip0.2cm {\bf AMS subject classifications.} 65N30, 65N25, 65L15, 65B99.
\end{abstract}
\section{Introduction}\label{Section_Intro}
Since their formulation in 1873,
Maxwell's equations and the corresponding eigenvalue problems have been fundamental to describing electromagnetic fields \cite{key22}.
In the realm of numerical simulation, the finite element method has become a standard tool for practical electromagnetic applications.
N\'{e}d\'{e}lec introduced the first and second families of edge elements in 1980 \cite{key24} and 1986 \cite{key25}, respectively.
Both families are conforming elements in  $\mathbf{H}(\mathbf{curl};\Omega)$ 
and their degrees of freedom are associated with edges,
which makes the lowest-order elements straightforward to construct.\cite{key28}.
However, for higher-order edge elements,
constructing systematic representations of basis functions corresponding to their degrees of freedom remains a challenge.
Most implementations of high-order edge elements do not conform to the standard finite element framework \cite{key33,key34},
as the basis functions often fail to satisfy the duality relation with the degrees 
of freedom defined for N\'{e}d\'{e}lec elements.
Consequently, developing higher-order discrete elements that rely solely on point-based degrees of freedom remains a compelling research objective.

Motivated by the ease of implementation in standard FEM packages and their excellent approximation properties for $\mathrm{H}^1$ functions, researchers have long attempted to discretize $\mathbf{H}(\mathbf{curl};\Omega)$ using continuous ($\mathrm{C}^0$) finite elements, such as conventional Lagrange elements.
However, a significant drawback is that when the solution does not belong to $\mathrm{H}^1(\Omega)$—which is often the case for Maxwell systems on domains with concave corners or re-entrant edges \cite{key3}—discrete problems based on continuous finite elements frequently converge to incorrect solutions \cite{key35,key5}.
To mitigate this, a commonly employed strategy is to add a weighted divergence stabilization term to the variational formulation, which can yield a correct $\mathrm{L}^2$ approximation when appropriate weighting coefficients and the corresponding $\mathrm{C}^0$ element function space are selected \cite{key36}.
Some special composite triangulations of the domain have also been adopted, such as Alfeld mesh, Powell-Sabin mesh and Worsey-Farin mesh \cite{key36,key39,key40}.
Additionally, Duan et al. proposed a method incorporating a projection operator into the penalty term \cite{key37}. 
Bonito and Guermond considered a mixed method controlling the divergence of the discrete solution 
in fractional Sobolev 
space $\mathrm{H}^{-\alpha}$ with $\alpha\in(1/2,1)$ \cite{key41}. 
While effective for specific problems, these methods are often cumbersome to implement.  
Special meshes are also impractical for large-scale computations due to numerous constraints on the triangulation \cite{key42}.

The aim of this paper is to design a robust continuous finite element method for 
the Maxwell eigenvalue problem utilizing the regular decomposition technique.
The core concept involves using a vector potential lifting operator to isolate the portion of the solution lacking $\mathbf{H}^1$ regularity into the gradient of the $\mathrm{H}^1(\Omega)$ scalar space \cite{key3}.
In particular, Pasciak and Zhao established an $\mathrm{L}^2$-stable regular 
decomposition \eqref{order-1-decomposion}, which represents functions in $\mathbf{H}_0(\mathbf{curl};\Omega)$ as the sum of $\mathbf{u}\in\mathbf{H}_0^{1}(\Omega)$ and the gradient of $p\in\mathrm{H}_0^{1}(\Omega)$ \cite{key23}.
The discrete scheme based on this decomposition employs two continuous finite element spaces to discretize the $\mathbf{H}^1$ vector space and the $\mathrm{H}^1$ scalar space.
Unfortunately, our numerical experiments in Section \ref{Section_Example_Cube} reveal that even when high-order Lagrange finite elements are used for smoothing problems,
this discrete scheme with homogeneous Dirichlet boundary conditions fails to achieve full convergence orders for eigenvalue approximations, as shown in Figure \ref{fig:cube H01}. 
This limitation may be attributed to inherent regularity
constraints of the $\mathbf{H}_0^1$-conforming potential functions in the regular decomposition.

Building on the regularity results for 
the de Rham complex on bounded Lipschitz domains established by Costabel and McIntosh \cite{key16},
we propose a novel high order regular decomposition for  $\mathbf{H}_0^s(\mathbf{curl};\Omega)$ with $s>1/2$.   
Specifically, by relaxing the boundary condition of the vector potential 
to a tangential trace vanishing condition, we demonstrate that any function 
$\boldsymbol{\xi}\in\mathbf{H}_0^s(\mathbf{curl};\Omega)$, whose components 
and curl possess regularity of order $s>1/2$, can be decomposed into 
a vector function $\mathbf{u}\in\mathbf{H}^{s+1}(\Omega)$ 
and the gradient of a scalar function $p\in\mathrm{H}^{s+1}(\Omega)$.
This decomposition allows for the direct discretization of the Maxwell eigenvalue problem using standard conforming Lagrange finite elements on regular meshes, leading 
to the corresponding numerical scheme. 

Based on the spectral convergence theory of compact operators \cite{key19}, a rigorous convergence analysis is provided in this paper.
We prove that our discrete scheme is free of spurious zero modes and exhibits spectral convergence when the vector and scalar potential spaces are discretized using Lagrange finite elements of order $k$ and $k+1$ with corresponding boundary conditions, respectively.
Furthermore, 
we also establish the optimal convergence rates for the eigenpair approximations 
in the $\mathbf{H}(\mathbf{curl};\Omega)$-norm. 
These rates are determined by the regularity of the exact eigenfunctions and the polynomial 
degree of the Lagrange elements used.
Numerical experiments on three representative polyhedrons  with varying concavity validate the efficiency of the proposed scheme and confirm the theoretical convergence rates.
Unlike existing approaches requiring specialized composite meshes or complex stabilization,
our numerical scheme is formally concise, attains the theoretical optimal convergence rates using only standard nodal Lagrange elements on regular tetrahedral meshes, and can be easily implemented within existing standard finite element packages.

The remainder of this paper is organized as follows.
Section \ref{Section_Pre} introduces the regular decomposition and preliminary spectral approximation results. 
As an application of regular decomposition, the discretization scheme using continuous finite elements is designed in Section \ref{Section_Problem}.
Sections \ref{Section_Operator} and \ref{Section_Error_Estimates} are devoted to presenting the spectral convergence analysis and error estimates for eigenpair approximations.
Finally, numerical examples are provided in Section \ref{Section_Numerical}
to validate the corresponding theoretical results. 

\section{Preliminaries}\label{Section_Pre}
Unless otherwise specified,
let $\Omega\subset\mathbb{R}^3$ be an open bounded contractible polyhedron with Lipschitz boundary $\partial\Omega$,
which means it has the homotopy
type of a one-point space \cite[P. 366]{key11}.  
Consequently, any domain that is topologically homeomorphic to the unit ball is contractible.

Assuming that the medium in $\Omega$ is homogeneous and isotropic, 
the electric permittivity $\varepsilon$ and the magnetic permeability $\mu$ are both positive constants in $\Omega$.
In such case, we can assume without loss of generality that $\varepsilon = \mu = 1$.
Let $\mathbf E$ represent the electric field.  
In this paper, we are concerned with the following Maxwell eigenvalue problem: 
Find $\lambda\in\mathbb R$ and $\mathbf{0}\neq \mathbf E:\Omega\to \mathbb{R}^3$ such that 
\begin{eqnarray}
\label{maxwell-eigenvalue-problem}
\left\{
\begin{array}{rll}
\nabla\times\nabla\times\mathbf{E}&=\lambda\mathbf{E},\quad&\mathrm{in}~\Omega, \\
\nabla\cdot\mathbf{E}&=\mathbf0,\quad&\mathrm{in}~\Omega, \\
\mathbf{E}\times\mathbf n &=\mathbf0,\quad&\mathrm{on}~\partial\Omega,
\end{array}
\right.
\end{eqnarray}
where $\mathbf n$ denotes 
the unit outward normal vector on $\partial\Omega$.

First, we introduce the functional settings which will be used here.  
Bold characters will denote vector valued functions and their corresponding functional spaces.
Given a three-dimensional domain $D$, for any $p\geq1$, $\mathrm{L}^p(D)$ 
denotes the classical Lebesgue function space 
and $\mathbf{L}^p(D) := [\mathrm{L}^p(D)]^3$.
For any nonnegative $s$, $\mathrm{H}^s(D)$ denotes the standard 
Sobolev space and $\mathbf{H}^s(D) := [\mathrm{H}^s(D)]^3$. 
For the analysis in this paper, 
we introduce the following classical Hilbert spaces 
\begin{align*}
\mathrm{H}_{0}^{1}(\Omega)&:=\left\{q\in\mathrm{H}^{1}(\Omega):
q=0\mathrm{~on~}\partial\Omega\right\},\\
\mathbf{H}(\mathrm{div},\Omega)&:=\left\{\mathbf{v}\in\mathbf{L}^2(\Omega):
\nabla\cdot\mathbf{v}\in\mathrm{L}^2(\Omega)\right\},\\
\mathbf{H}_0(\mathrm{div},\Omega)&:=\{\mathbf{v}\in\mathbf{H}(\mathrm{div},\Omega):
\mathbf{v}\cdot\mathbf{n}=0\mathrm{~on~}\partial\Omega\},\\
\mathbf{H}(\mathrm{div}^{0},\Omega)&:=\{\mathbf{v}\in\mathbf{H}(\mathrm{div},\Omega):
\nabla\cdot\mathbf{v}=0\},\\
\mathbf{H}_{0}(\mathrm{div}^{0},\Omega)&:
=\mathbf{H}_0(\mathrm{div},\Omega)\cap\mathbf{H}(\mathrm{div}^0,\Omega),\\
\mathbf{H}(\mathbf{curl},\Omega)&:=\left\{\mathbf{v}\in\mathbf{L}^2(\Omega):
\nabla\times\mathbf{v}\in\mathbf{L}^2(\Omega)\right\},\\
\mathbf{H}_0(\mathbf{curl},\Omega)&:=\{\mathbf{v}\in\mathbf{H}(\mathbf{curl},\Omega):
\mathbf{v}\times\mathbf{n}=\mathbf{0}\mathrm{~on~}\partial\Omega\},\\
\mathbf{H}(\mathbf{curl}^0,\Omega)&:=\{\mathbf{v}\in\mathbf{H}(\mathbf{curl},\Omega):
\nabla\times\mathbf{v}=\mathbf{0}\},\\
\mathbf{H}_0(\mathbf{curl}^0,\Omega)&:
=\mathbf{H}_0(\mathbf{curl},\Omega)\cap\mathbf{H}(\mathbf{curl}^0,\Omega),\\
\mathbf{H}^s(\mathbf{curl},\Omega)&:=\left\{\mathbf{v}\in\mathbf{H}^s(\Omega):
\nabla\times\mathbf{v}\in\mathbf{H}^s(\Omega)\right\},\\
\mathbf{H}_0^s(\mathbf{curl};\Omega)&:
=\mathbf{H}_0(\mathbf{curl},\Omega)\cap\mathbf{H}^s(\mathbf{curl},\Omega), 
\end{align*}
with the corresponding norm definitions 
\begin{align*}
\|\mathbf{v}\|^2_{\mathbf{H}(\mathrm{div},\Omega)}&=
\|\mathbf{v}\|_{\mathbf{L}^2(\Omega)}^2+\|\nabla\cdot\mathbf{v}\|_{\mathrm{L}^2(\Omega)}^2,\\
\|\mathbf{v}\|^2_{\mathbf{H}(\mathbf{curl},\Omega)}&=
\|\mathbf{v}\|_{\mathbf{L}^2(\Omega)}^2+\|\nabla\times\mathbf{v}\|_{\mathbf{L}^2(\Omega)}^2,\\
\|\mathbf{v}\|^2_{\mathbf{H}^s(\mathbf{curl};\Omega)}&=
\|\mathbf{v}\|_{\mathbf{H}^s(\Omega)}^2+\|\nabla\times\mathbf{v}\|_{\mathbf{H}^s(\Omega)}^2.
\end{align*}
For the sake of simplicity, the inner product of 
both $\mathrm{L}^2(\Omega)$ and $\mathbf{L}^2(\Omega)$ is denoted as $(\cdot, \cdot)$ 
Besides, the analysis for the Maxwell problems always need following space 
\begin{align*}
X(\Omega):=\mathbf{H}(\mathbf{curl};\Omega)\cap \mathbf{H}(\mathrm{div};\Omega),
\end{align*}
and its subspaces 
\begin{align*}
X_N(\Omega):=\mathbf{H}_0(\mathbf{curl};\Omega)
\cap \mathbf{H}(\mathrm{div};\Omega),\quad
X_T(\Omega):=\mathbf{H}(\mathbf{curl};\Omega)\cap \mathbf{H}_0(\mathrm{div};\Omega).
\end{align*}
The space $X(\Omega)$ is endowed with the following norm 
\begin{align*}
\|\mathbf{v}\|^{2}_{X(\Omega)}=\|\mathbf{v}\|_{\mathbf{L}^{2}(\Omega)}^2
+\|\nabla\times\mathbf{v}\|_{\mathbf{L}^{2}(\Omega)}^2
+\|\nabla\cdot\mathbf{v}\|_{\mathrm{L}^{2}(\Omega)}^2.
\end{align*}
In the paper, we will repeatedly use the following embedding theorem, 
which depends largely on the geometry of $\Omega$.

\begin{lem}(\cite[Section 2]{key3})\label{X_N-X_T-embedded-theorem}
Let $\Omega\subset\mathbb{R}^3$ be a bounded Lipschitz domain. 
Then the space $X_N(\Omega)$ and $X_T(\Omega)$ 
are compactly embedded into $\mathbf{L}^2(\Omega)$.
In particular, when $\Omega$ is convex, 
$X_N(\Omega)$ and $X_T(\Omega)$ are continuously embedded into $\mathbf{H}^1(\Omega)$.
When $\Omega$ is a polyhedron, there exists a constant $s>1/2$, 
depending only on the geometry of $\Omega$, 
such that $X_N(\Omega)$ and $X_T(\Omega)$ are continuously 
embedded into $\mathbf{H}^s(\Omega)$.
More precisely, $s$ depends on the apertures along the edges 
and the solid angles at the vertices.
\end{lem}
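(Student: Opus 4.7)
I would establish the three claims in sequence, since they rely on progressively more delicate regularity theory.

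For the compact embedding into $\mathbf{L}^2(\Omega)$ on a general bounded Lipschitz domain, I would use a Helmholtz-type decomposition to convert an $X_N(\Omega)$-bound into a sum of bounds in slightly smoother spaces. Given $\mathbf{v}\in X_N(\Omega)$, let $\phi\in\mathrm{H}_0^1(\Omega)$ solve $-\Delta\phi=-\nabla\cdot\mathbf{v}$. Then $\mathbf{w}:=\mathbf{v}-\nabla\phi$ is divergence-free and still satisfies $\mathbf{w}\times\mathbf{n}=\mathbf{0}$. Jerison--Kenig elliptic regularity on Lipschitz domains gives $\phi\in\mathrm{H}^{1+\varepsilon}(\Omega)$, hence $\nabla\phi\in\mathbf{H}^\varepsilon(\Omega)$, while Costabel's regularity result for the $\mathbf{H}_0(\mathbf{curl};\Omega)\cap\mathbf{H}(\mathrm{div}^0;\Omega)$ system yields $\mathbf{w}\in\mathbf{H}^\varepsilon(\Omega)$. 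Summing, $X_N(\Omega)$ embeds continuously into $\mathbf{H}^\varepsilon(\Omega)$ for some $\varepsilon>0$, and Rellich--Kondrachov closes the argument. The case of $X_T(\Omega)$ is symmetric, with the roles of the boundary conditions interchanged.

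For the convex case, I would invoke the classical integration-by-parts identity
\[
\|\nabla\mathbf{v}\|_{\mathbf{L}^2(\Omega)}^2 = \|\nabla\times\mathbf{v}\|_{\mathbf{L}^2(\Omega)}^2 + \|\nabla\cdot\mathbf{v}\|_{\mathrm{L}^2(\Omega)}^2 + B(\mathbf{v},\mathbf{v}),
\]
where $B(\mathbf{v},\mathbf{v})$ is a boundary term involving the second fundamental form of $\partial\Omega$. On smooth convex domains $B(\mathbf{v},\mathbf{v})\geq 0$ for $\mathbf{v}$ satisfying either the tangential or the normal vanishing trace condition, giving $\|\nabla\mathbf{v}\|_{\mathbf{L}^2}\lesssim\|\mathbf{v}\|_{X(\Omega)}$. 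The extension to convex Lipschitz domains proceeds by approximation with smooth convex domains and a uniform bound. For the polyhedral case this clean identity fails near edges and vertices, so I would follow the Amrouche--Bernardi--Dauge--Girault strategy: decompose $\mathbf{v}=\mathbf{v}_{\mathrm{reg}}+\mathbf{v}_{\mathrm{sing}}$, where $\mathbf{v}_{\mathrm{reg}}\in\mathbf{H}^1(\Omega)$ and $\mathbf{v}_{\mathrm{sing}}$ is a finite linear combination of singular model functions localized at each edge and vertex. The Sobolev exponent of each singular function is dictated by the smallest nontrivial eigenvalue of a Laplace--Beltrami-type pencil on a cross-sectional interval (for edges) or spherical polygon (for vertex cones), and these eigenvalues depend only on the dihedral apertures and the solid angles.

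The main obstacle is the polyhedral case: proving the uniform lower bound $s>1/2$ for \emph{every} admissible Lipschitz polyhedron requires the full singular-function calculus on edges and corners, together with careful matching of edge and vertex expansions at each vertex where edges meet. Every gain of regularity beyond $\mathbf{H}^{1/2}$ is controlled by explicit spectral thresholds of the geometric pencil operators, and verifying that these thresholds remain strictly above the critical value $1/2$ uniformly over admissible apertures is the technical heart of the argument and the step I would expect to be most delicate.
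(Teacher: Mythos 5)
The paper offers no proof of this lemma: it is imported verbatim from the cited reference (Section~2 of \cite{key3}, the Amrouche--Bernardi--Dauge--Girault framework), so there is no in-paper argument to compare against. Your plan is a faithful reconstruction of how that literature actually proves the three claims --- Weber/Costabel-type compactness via a Helmholtz split and Rellich, the curvature-signed integration-by-parts identity for convex domains, and the edge/vertex singular-function calculus for the polyhedral exponent $s>1/2$ --- and you correctly identify the spectral thresholds of the edge and corner pencils as the delicate step. Two minor remarks: invoking Costabel's $\mathbf{H}^{1/2}$ regularity for the divergence-free part already subsumes the compact-embedding claim (his theorem applies to all of $X_N(\Omega)$ directly, so the Helmholtz split is not needed for compactness, though it does no harm); and for $X_T(\Omega)$ the auxiliary potential must solve a Neumann rather than a Dirichlet problem, which you flag only implicitly by saying the roles of the boundary conditions are interchanged. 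Neither point is a gap.
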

Next, we present the procedure of obtaining the regular decomposition 
for the space $\mathbf{H}_0^s(\mathbf{curl};\Omega)$.
As a well-known result, for the special case of $s=0$, there exists a curl-preserving 
operator from $\mathbf{H}_0(\mathbf{curl};\Omega)$ onto $\mathbf{H}_0^1(\Omega)$.
\begin{lem}(\cite[Section 2]{key23})\label{thm:Lipschitz-contractible-order_1-decomposion}
Let $\Omega$ be a bounded contractible Lipschitz domain in $\mathbb{R}^3$. Then there exist 
continuous linear operators $\mathsf{P}_0: \mathbf{H}_0(\mathbf{curl};\Omega)\mapsto \mathbf{H}_0^1(\Omega)$ 
and $\mathsf{Q}_0$: $\mathbf{H}_0(\mathbf{curl};\Omega)$ $\mapsto$  $\mathrm{H}_0^1(\Omega)$ such that
\begin{align}
\mathsf{P}_0+\mathbf{grad}\circ\mathsf{Q}_0=\mathsf{Id},
\end{align}
where $\mathsf{Id}$ denotes the identity mapping on $\mathbf{H}_0(\mathbf{curl};\Omega)$.
\end{lem}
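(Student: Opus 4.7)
The plan is to split the construction into two parts: first build $\mathsf{P}_0\boldsymbol{\xi}\in\mathbf{H}_0^1(\Omega)$ as a lifted vector potential carrying the curl of $\boldsymbol{\xi}$, and then obtain $\mathsf{Q}_0\boldsymbol{\xi}\in\mathrm{H}_0^1(\Omega)$ as a scalar potential for the curl-free remainder via the Poincar\'e lemma. The identity $\mathsf{P}_0+\mathbf{grad}\circ\mathsf{Q}_0=\mathsf{Id}$ will then be automatic from the construction.

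For the first part, given $\boldsymbol{\xi}\in\mathbf{H}_0(\mathbf{curl};\Omega)$, set $\mathbf{w}:=\nabla\times\boldsymbol{\xi}\in\mathbf{L}^2(\Omega)$. Integration by parts against $\nabla\phi$ with $\phi\in\mathrm{H}^1(\Omega)$, combined with $\boldsymbol{\xi}\times\mathbf{n}=\mathbf{0}$ on $\partial\Omega$, gives $(\mathbf{w},\nabla\phi)=0$, so $\mathbf{w}\in\mathbf{H}_0(\mathrm{div}^0;\Omega)$. The goal is then to produce a continuous linear right-inverse $R\colon\mathbf{H}_0(\mathrm{div}^0;\Omega)\to\mathbf{H}_0^1(\Omega)$ of the curl operator. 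I would zero-extend $\mathbf{w}$ to a large ball $B\supset\Omega$---admissible precisely because the normal trace of $\mathbf{w}$ on $\partial\Omega$ vanishes---obtaining $\widetilde{\mathbf{w}}\in\mathbf{H}(\mathrm{div}^0;B)$, then build a potential $\boldsymbol{\phi}\in\mathbf{H}^1(B)$ with $\nabla\times\boldsymbol{\phi}=\widetilde{\mathbf{w}}$ either by the Biot--Savart representation on $\mathbb{R}^3$ or by the smooth vector Laplacian on $B$. Finally, $\boldsymbol{\phi}|_\Omega$ must be corrected to vanish on $\partial\Omega$ without destroying the curl, which I would do by subtracting the gradient of a scalar solving a suitable Dirichlet problem on $\Omega$. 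Setting $\mathsf{P}_0\boldsymbol{\xi}:=R\mathbf{w}$ then gives a continuous linear operator into $\mathbf{H}_0^1(\Omega)$.

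For the second part, define $\boldsymbol{\eta}:=\boldsymbol{\xi}-\mathsf{P}_0\boldsymbol{\xi}$. By construction $\nabla\times\boldsymbol{\eta}=\mathbf{0}$ in $\Omega$, and $\boldsymbol{\eta}\times\mathbf{n}=\mathbf{0}$ on $\partial\Omega$ since both summands have vanishing tangential trace, hence $\boldsymbol{\eta}\in\mathbf{H}_0(\mathbf{curl}^0;\Omega)$. Contractibility of $\Omega$ delivers via the Poincar\'e lemma a scalar $p\in\mathrm{H}^1(\Omega)$ with $\nabla p=\boldsymbol{\eta}$, unique up to an additive constant. The tangential condition $\nabla p\times\mathbf{n}=\mathbf{0}$ forces $p$ to be locally constant on $\partial\Omega$; since contractibility of $\Omega$ guarantees that $\partial\Omega$ is connected, a single constant subtraction places $p$ in $\mathrm{H}_0^1(\Omega)$. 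Setting $\mathsf{Q}_0\boldsymbol{\xi}:=p$, linearity of both operators is immediate from the construction, and continuity of $\mathsf{Q}_0$ follows from the Poincar\'e inequality:
$$\|p\|_{\mathrm{H}^1(\Omega)}\lesssim\|\nabla p\|_{\mathbf{L}^2(\Omega)}\leq\|\boldsymbol{\xi}\|_{\mathbf{L}^2(\Omega)}+\|\mathsf{P}_0\boldsymbol{\xi}\|_{\mathbf{L}^2(\Omega)}\lesssim\|\boldsymbol{\xi}\|_{\mathbf{H}(\mathbf{curl};\Omega)}.$$

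The main obstacle is the construction of $R$. Since $\Omega$ is only assumed Lipschitz (neither smooth nor convex), standard vector-Laplacian regularity on $\Omega$ provides at best an $\mathbf{H}^s(\Omega)$ potential with $s\leq 1$ and offers no homogeneous boundary trace. The extension--lifting--correction scheme sketched above is what achieves the full $\mathbf{H}_0^1$ target; ensuring that the correction depends linearly and continuously on the data, and that it does not reintroduce curl, is the delicate technical step and the true content of the lemma.
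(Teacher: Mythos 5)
The paper does not prove this lemma; it is imported verbatim from \cite[Section 2]{key23} (Pasciak--Zhao), so there is no in-paper argument to compare against. Judged on its own, your attempt has a genuine gap at the decisive step. Reducing everything to a continuous right inverse $R:\mathbf{H}_0(\mathrm{div}^0;\Omega)\to\mathbf{H}_0^1(\Omega)$ of the curl is fine up to the point where you must force the lifted potential $\boldsymbol{\phi}|_\Omega$ to have \emph{vanishing full trace} on $\partial\Omega$. Subtracting $\nabla\psi$ with $\psi$ ``solving a suitable Dirichlet problem'' cannot do this: you need $\nabla\psi=\boldsymbol{\phi}$ on all of $\partial\Omega$, i.e.\ you must simultaneously match the tangential trace (forcing $\nabla_{\partial\Omega}(\psi|_{\partial\Omega})=\boldsymbol{\phi}_t$, which presupposes that $\boldsymbol{\phi}_t$ is a surface gradient and that its scalar potential lies in the $\mathrm{H}^2$-trace class) and the normal derivative $\partial_n\psi=\boldsymbol{\phi}\cdot\mathbf{n}$. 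That is a two-component Whitney-type $\mathrm{H}^2$-trace lifting on a merely Lipschitz boundary, not a Dirichlet problem, and you neither verify the compatibility conditions nor the boundedness of such a lifting. You correctly flag this as ``the true content of the lemma,'' but that is precisely the part left unproved, so the argument is incomplete.

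For contrast, the standard proof in \cite{key23} sidesteps this trace-matching entirely: one extends $\boldsymbol{\xi}$ \emph{itself} (not just its curl) by zero to a ball $B\supset\overline{\Omega}$ --- admissible because the tangential trace vanishes --- applies the boundary-condition-free regular decomposition $\widetilde{\boldsymbol{\xi}}=\mathbf{w}+\nabla p$ on $B$, and observes that on $B\setminus\overline{\Omega}$ one has $\mathbf{w}=-\nabla p$, so $p\in\mathrm{H}^2(B\setminus\overline{\Omega})$ there. A bounded $\mathrm{H}^2$-extension of $p|_{B\setminus\overline{\Omega}}$ back over $B$, say $\widetilde{p}$, then yields $\mathbf{w}+\nabla\widetilde{p}$ and $p-\widetilde{p}$, both of which vanish identically on $B\setminus\overline{\Omega}$ and hence restrict to $\mathbf{H}_0^1(\Omega)$ and $\mathrm{H}_0^1(\Omega)$ respectively, with all operators linear and bounded. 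The information that $\boldsymbol{\xi}$ vanishes outside $\Omega$ --- which your scheme discards by extending only $\nabla\times\boldsymbol{\xi}$ --- is exactly what makes the boundary conditions come out for free. Your second part (the Poincar\'e-lemma step producing $\mathsf{Q}_0$, including the normalization using connectedness of $\partial\Omega$) is sound, but it rests on the unestablished first part.
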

Lemma \ref{thm:Lipschitz-contractible-order_1-decomposion} means that  
the space $\mathbf{H}_0(\mathbf{curl};\Omega)$ has following regular decomposition 
\begin{align}\label{order-1-decomposion}
\mathbf{H}_0(\mathbf{curl};\Omega)=\mathbf{H}_0^1(\Omega)+\nabla (\mathrm{H}_0^1(\Omega)).
\end{align} 
To consider the regular decomposition of the space $\mathbf{H}_0^s(\mathbf{curl};\Omega)$ 
with $s>0$, we begin by employing the potential lifting theory of arbitrary 
order 
from \cite{key16} and the de Rham complexes  
from \cite{key8}. 
\begin{lem}(\cite[Corollary 4.7]{key16})\label{forall-order-derham}
Let $\Omega$ be a bounded Lipschitz domain in $\mathbb{R}^3$ 
and $s,t>0$.
If $\mathbf{u}\in \mathbf{H}^s(\Omega)$ satisfies
$\mathbf{u}=\nabla\times \mathbf{v}$ for some $\mathbf{v}\in \mathbf{H}^t(\Omega)$, then there exists
$\mathbf{w}\in \mathbf{H}^{s+1}(\Omega)$ 
such that $\mathbf{u}=\nabla\times \mathbf{w}$,
and there is a constant $C$ independent of $\mathbf{u}$ and $\mathbf{v}$ with
\begin{align}
\|\mathbf{w}\|_{\mathbf{H}^{s+1}(\Omega)}
\leq C(\|\mathbf{u}\|_{\mathbf{H}^s(\Omega)}+\|\mathbf{v}\|_{\mathbf{H}^{t}(\Omega)}).
\end{align}
\end{lem}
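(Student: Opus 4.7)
The plan is to build $\mathbf{w}$ from $\mathbf{u}$ by a regularized Poincar\'e-type integral operator that inverts the curl while gaining one degree of Sobolev regularity. The hypothesis $\mathbf{u}=\nabla\times\mathbf{v}$ with $\mathbf{v}\in\mathbf{H}^t(\Omega)$ plays a purely cohomological role: it forces $\nabla\cdot\mathbf{u}=0$ distributionally and certifies that $\mathbf{u}$ lies in the range of $\nabla\times$, so every topological obstruction to the existence of a global primitive already vanishes. The regularity of $\mathbf{w}$ itself will not be inherited from $\mathbf{v}$ but manufactured from that of $\mathbf{u}$. Since any bounded Lipschitz domain admits a finite open cover $\{\Omega_j\}$ by subdomains each star-shaped with respect to a ball $B_j$, I would first reduce to this local setting via a partition of unity.

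On a star-shaped piece $\Omega_j$, and for a cutoff $\theta_j\in C_c^\infty(B_j)$ of unit mass, I would use the averaged Poincar\'e operator
\[
(R_j\mathbf{u})(x) := \int_{B_j}\theta_j(y)\int_0^1 t\,\mathbf{u}\bigl(y+t(x-y)\bigr)\times(x-y)\,dt\,dy,
\]
which satisfies $\nabla\times R_j\mathbf{u}=\mathbf{u}$ whenever $\nabla\cdot\mathbf{u}=0$. After a change of variables the kernel is smooth in $y$ with a singularity in $x-y$ homogeneous of degree $-2$, identifying $R_j$ as a classical pseudodifferential operator of order $-1$ on $\mathbb{R}^3$. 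Consequently, after a Stein extension $\tilde{\mathbf{u}}\in\mathbf{H}^s(\mathbb{R}^3)$ of $\mathbf{u}$, we obtain $\mathbf{w}_j:=R_j\tilde{\mathbf{u}}\in\mathbf{H}^{s+1}(\Omega_j)$ with $\nabla\times\mathbf{w}_j=\mathbf{u}$ on $\Omega_j$ and the expected bound.

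To globalize, I would use a \v{C}ech--Mayer--Vietoris argument. On overlaps $\Omega_i\cap\Omega_j$ the differences $\mathbf{w}_i-\mathbf{w}_j$ are curl-free and hence equal $\nabla\phi_{ij}$ for scalar potentials $\phi_{ij}$ of one order higher regularity (obtained by solving a Poisson problem or by a scalar Poincar\'e operator). These cocycles assemble into a class in the first de Rham cohomology whose vanishing is exactly certified by the global primitive $\mathbf{v}$; decomposing $\phi_{ij}=\phi_i-\phi_j$ and setting
\[
\mathbf{w} := \sum_j \chi_j\bigl(\mathbf{w}_j-\nabla\phi_j\bigr)
\]
for a partition of unity $\{\chi_j\}$ subordinate to $\{\Omega_j\}$ produces a global $\mathbf{w}\in\mathbf{H}^{s+1}(\Omega)$ with $\nabla\times\mathbf{w}=\mathbf{u}$, since the commutator terms coming from derivatives of the $\chi_j$ telescope by the cocycle condition.

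The main obstacle is this patching step on a Lipschitz (possibly topologically non-trivial) $\Omega$: each scalar correction must land in $\mathrm{H}^{s+2}$ rather than merely $\mathrm{H}^{s+1}$, which requires scalar elliptic regularity on Lipschitz subdomains with the sharp gain, and the commutators arising from the cutoffs must be controlled uniformly in the cover. Once these ingredients are in place, the norm estimate follows by collecting the bounds for the $R_j$, the extension operator, and the elliptic correction step, with $\|\mathbf{v}\|_{\mathbf{H}^t(\Omega)}$ only controlling the cohomology class and not entering the regularity gain itself.
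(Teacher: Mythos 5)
The paper does not actually prove this statement: it is quoted verbatim from Costabel and McIntosh \cite[Corollary 4.7]{key16}, so there is no internal proof to compare against. Your sketch is essentially a reconstruction of the strategy of that reference --- the averaged operator $R_j$ you write down is exactly their regularized Poincar\'e operator for $2$-forms, and its identification as a pseudodifferential operator of order $-1$ on star-shaped domains is their main theorem --- so the local step is sound and correctly sourced.

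The globalization, however, contains a concrete gap exactly where you flag the ``main obstacle.'' Pairwise intersections $\Omega_i\cap\Omega_j$ of star-shaped pieces of a Lipschitz domain need not be connected, let alone simply connected, so ``curl-free and hence equal to $\nabla\phi_{ij}$'' is not justified on the overlaps; moreover, the obstruction to splitting the resulting cocycle sits in degree two (it is the class of $\mathbf{u}$, viewed as a closed $2$-form, detected in $\check{H}^2$ of the cover with locally constant coefficients), not in the first de Rham cohomology as you assert. Both defects are repaired by invoking $\mathbf{v}$ earlier and more explicitly: on each star-shaped (hence simply connected) piece $\Omega_k$ the field $\mathbf{w}_k-\mathbf{v}$ is curl-free, so $\mathbf{w}_k-\mathbf{v}=\nabla\psi_k$ with $\psi_k$ normalized to zero mean, and $\phi_{ij}:=\psi_i-\psi_j$ is then an explicit primitive of $\mathbf{w}_i-\mathbf{w}_j$ on every overlap regardless of its topology, satisfying the cocycle identity $\phi_{ij}+\phi_{jk}+\phi_{ki}=0$ exactly with no constants to adjust. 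Its regularity is recovered a posteriori from $\nabla\phi_{ij}=\mathbf{w}_i-\mathbf{w}_j\in\mathbf{H}^{s+1}$ together with an $\mathrm{L}^2$ bound on $\phi_{ij}$ controlled by $\|\mathbf{u}\|_{\mathbf{H}^s(\Omega)}+\|\mathbf{v}\|_{\mathbf{H}^{t}(\Omega)}$. This is also precisely where $\|\mathbf{v}\|_{\mathbf{H}^{t}(\Omega)}$ enters the final estimate --- through the low-order part of the scalar correctors --- a quantitative point your sketch leaves unaccounted for when it says $\mathbf{v}$ ``only controls the cohomology class.''
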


\begin{lem}(\cite[Section 2]{key8})\label{thm:exact_de_rham_zero_bc}
Let $\Omega$ be a bounded Lipschitz domain in $\mathbb{R}^3$.
The following two de Rham complexes hold 
\begin{align}
\label{eq:de_rham_sequence}
&\mathrm{H}^1(\Omega) \xrightarrow{\mathbf{grad}} \mathbf{H}(\mathbf{curl};\Omega) 
\xrightarrow{\mathbf{curl}} \mathbf{H}(\mathrm{div};\Omega) \xrightarrow{\mathrm{div}} \mathrm{L}^2(\Omega), \\
\label{eq:de_rham_sequence_zero_bc}
&\mathrm{H}_0^1(\Omega) \xrightarrow{\mathbf{grad}} \mathbf{H}_0(\mathbf{curl};\Omega) 
\xrightarrow{\mathbf{curl}} \mathbf{H}_0(\mathrm{div};\Omega) \xrightarrow{\mathrm{div}} \mathrm{L}^2(\Omega).
\end{align}
When $\Omega$ is also contractible, the two de Rham complexes are exact 
in the sense of 
\begin{equation}
\mathrm{Im}(\mathbf{grad}) = \ker(\mathbf{curl}),  \quad \mathrm{Im}(\mathbf{curl}) = \ker(\mathrm{div}).
\end{equation}
\end{lem}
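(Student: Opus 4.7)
The plan is to establish each de Rham sequence in two stages: first verify that the compositions vanish so that the sequences are genuine complexes, and then prove exactness under the contractibility assumption. The inclusion statements $\mathbf{grad}(\mathrm{H}^1(\Omega))\subset \mathbf{H}(\mathbf{curl};\Omega)$, $\mathbf{curl}(\mathbf{H}(\mathbf{curl};\Omega))\subset \mathbf{H}(\mathrm{div};\Omega)$ and $\mathrm{div}(\mathbf{H}(\mathrm{div};\Omega))\subset \mathrm{L}^2(\Omega)$ are immediate from the distributional identities $\mathbf{curl}\circ\mathbf{grad}=0$ and $\mathrm{div}\circ\mathbf{curl}=0$, which hold for all distributions. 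For the sequence with vanishing boundary data, I would additionally note that the boundary trace conditions are preserved under the differential operators in the sense used to define $\mathbf{H}_0(\mathbf{curl};\Omega)$ and $\mathbf{H}_0(\mathrm{div};\Omega)$; for instance, if $u\in \mathrm{H}^1_0(\Omega)$ then $\mathbf{grad}\,u\times\mathbf{n}=0$ on $\partial\Omega$ because the tangential gradient of a boundary-vanishing function is zero.

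For exactness, the inclusions $\mathrm{Im}(\mathbf{grad})\subset\ker(\mathbf{curl})$ and $\mathrm{Im}(\mathbf{curl})\subset\ker(\mathrm{div})$ are trivial. The reverse inclusions are the content of the Poincaré lemma for contractible Lipschitz domains. For $\ker(\mathbf{curl})\subset\mathrm{Im}(\mathbf{grad})$ in the first complex, I would first treat smooth curl-free vector fields by defining the scalar potential via line integrals from a fixed base point, whose value is path-independent by Stokes' theorem and contractibility. Extension to $\mathbf{L}^2$ data proceeds by mollification inside compactly contained subdomains together with a limiting argument in $\mathrm{H}^1$ on an exhaustion of $\Omega$, using a uniform Poincaré inequality for the normalized potentials. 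The inclusion $\ker(\mathrm{div})\subset\mathrm{Im}(\mathbf{curl})$ is handled analogously, producing a vector potential by componentwise path integrals of a divergence-free field on a star-shaped (or contractible) region.

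The homogeneous case \eqref{eq:de_rham_sequence_zero_bc} is the main obstacle, because one must simultaneously solve $\mathbf{grad}\,\varphi=\mathbf{v}$ (respectively $\mathbf{curl}\,\boldsymbol{\psi}=\mathbf{u}$) and arrange that $\varphi\in\mathrm{H}^1_0(\Omega)$ (respectively $\boldsymbol{\psi}\in\mathbf{H}_0(\mathbf{curl};\Omega)$). For $\mathbf{v}\in\mathbf{H}_0(\mathbf{curl};\Omega)$ with $\mathbf{curl}\,\mathbf{v}=0$, the potential obtained from the first step is constant on each connected component of $\partial\Omega$ because the tangential trace of $\mathbf{v}$ vanishes; contractibility of $\Omega$ forces $\partial\Omega$ to be connected, so subtracting this constant gives $\varphi\in\mathrm{H}^1_0(\Omega)$. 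For $\mathbf{u}\in\mathbf{H}_0(\mathrm{div};\Omega)$ with $\mathrm{div}\,\mathbf{u}=0$, the standard route is extension by zero across $\partial\Omega$ to a slightly larger contractible Lipschitz domain $\widetilde{\Omega}\supset\overline{\Omega}$: the extension $\widetilde{\mathbf{u}}$ is still divergence-free as a distribution on $\widetilde{\Omega}$ thanks to the vanishing normal trace, so by the non-boundary-condition case applied on $\widetilde{\Omega}$ there exists $\widetilde{\boldsymbol{\psi}}\in\mathbf{H}(\mathbf{curl};\widetilde{\Omega})$ with $\mathbf{curl}\,\widetilde{\boldsymbol{\psi}}=\widetilde{\mathbf{u}}$; since $\widetilde{\mathbf{u}}=0$ on $\widetilde{\Omega}\setminus\overline{\Omega}$, there $\widetilde{\boldsymbol{\psi}}$ is curl-free, hence is a gradient on that neighborhood of $\partial\Omega$, which can be subtracted off to obtain $\boldsymbol{\psi}\in\mathbf{H}_0(\mathbf{curl};\Omega)$.

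The technical subtleties I expect to wrestle with are all rooted in the Lipschitz regularity of the boundary: mollification does not preserve boundary conditions, traces are defined only in fractional Sobolev spaces, and the extension-by-zero step requires a careful distributional computation of $\mathrm{div}\,\widetilde{\mathbf{u}}$ across $\partial\Omega$. These are precisely the issues resolved by the Costabel–McIntosh machinery of \cite{key16} already invoked elsewhere in the paper, so for a self-contained treatment I would invoke their continuous right-inverses of $\mathbf{curl}$ and $\mathrm{div}$ directly in place of the hand-built potentials sketched above.
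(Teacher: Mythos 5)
The paper does not prove this lemma at all: it is quoted verbatim as a known result from \cite{key8} (with the lifting machinery of \cite{key16} invoked separately in Theorem \ref{thm:Lipschitz-contractible-order_s+1-decomposion}), so there is no internal proof to compare against. Your sketch is the standard textbook route (Poincar\'e lemma plus trace/extension arguments, as in Girault--Raviart or Amrouche--Bernardi--Dauge--Girault), and its overall architecture is sound: complexes via $\mathbf{curl}\circ\mathbf{grad}=0$, $\mathrm{div}\circ\mathbf{curl}=0$ and preservation of the homogeneous traces; scalar potentials by path integrals; the $\mathrm{H}_0^1$ potential by observing that the boundary trace is locally constant and that a contractible domain has connected boundary; the $\mathbf{H}_0(\mathbf{curl})$ vector potential by extension by zero to a larger domain followed by a gradient correction in the exterior collar. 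Two points are asserted where the real content lies and where the argument would genuinely fail without contractibility: (i) that contractibility of a bounded Lipschitz $\Omega$ forces $\partial\Omega$ to be connected (this follows from $\tilde{H}^2(\Omega)=0$ via Alexander duality, giving a connected complement and hence a connected boundary, but it is not automatic from ``simply connected''); and (ii) that the exterior shell $\widetilde{\Omega}\setminus\overline{\Omega}$ has trivial first de Rham cohomology, so that the curl-free restriction of $\widetilde{\boldsymbol{\psi}}$ there is a gradient --- again true by a duality/Mayer--Vietoris argument for contractible $\Omega$, but false for, say, a solid torus, which is exactly why the lemma's exactness claim needs the contractibility hypothesis. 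Since you explicitly defer the Lipschitz-regularity technicalities (mollification near the boundary, fractional traces, the distributional computation of $\mathrm{div}\,\widetilde{\mathbf{u}}$ across $\partial\Omega$) to \cite{key16}, and those are indeed where a fully self-contained proof would have to do real work, I would only ask you to make the two topological facts above explicit; otherwise the proposal is a correct reconstruction of the result the paper takes for granted.
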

As an application of Lemmas \ref{forall-order-derham} and \ref{thm:exact_de_rham_zero_bc}, we can naturally obtain the following regular decomposition theorem of the space $\mathbf{H}^s(\mathbf{curl};\Omega)$ with $s\geq0$.

\begin{thm}
\label{thm:Lipschitz-contractible-order_s+1-decomposion}
Let $\Omega$ be a bounded contractible Lipschitz domain in $\mathbb{R}^3$ and $s\geq0$. 
Then there exist continuous linear operators $\mathsf{M}:\mathbf{H}^s(\mathbf{curl};\Omega)\mapsto \mathbf{H}^{s+1}(\Omega)$ and $\mathsf{N}:\mathbf{H}^s(\mathbf{curl};\Omega)\mapsto \mathrm{H}^{s+1}(\Omega)$ such that 
\begin{align}
\mathsf{M}+\mathbf{grad}\circ\mathsf{N}=\mathsf{Id},
\end{align}
where $\mathsf{Id}$ denotes the identity mapping on $\mathbf{H}^s(\mathbf{curl};\Omega)$.
Furthermore, for any $\boldsymbol{\xi}\in \mathbf{H}^s(\mathbf{curl};\Omega)$, 
the following inequalities hold 
\begin{align}\label{eq:Lipschitz-contractible-order_s+1-decomposion-estimate}
\|\mathsf{M}\boldsymbol{\xi}\|_{\mathbf{H}^{s+1}(\Omega)}
\leq C_1\|\boldsymbol{\xi}\|_{\mathbf{H}^s(\mathbf{curl};\Omega)}, \quad
\|\mathsf{N}\boldsymbol{\xi}\|_{\mathrm{H}^{s+1}(\Omega)}
\leq C_2\|\boldsymbol{\xi}\|_{\mathbf{H}^s(\mathbf{curl};\Omega)},
\end{align}
where $C_1$ and $C_2$ are constants depending on the domain $\Omega$.
\end{thm}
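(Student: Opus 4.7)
The plan is to construct $\mathsf{M}$ first by lifting the curl via Lemma \ref{forall-order-derham}, then define $\mathsf{N}$ via the exactness of the de Rham complex in Lemma \ref{thm:exact_de_rham_zero_bc}, promoting the scalar potential's regularity at the end. Fix $\boldsymbol{\xi}\in\mathbf{H}^s(\mathbf{curl};\Omega)$. Observe that $\nabla\times\boldsymbol{\xi}\in\mathbf{H}^s(\Omega)$ and is itself the curl of $\boldsymbol{\xi}\in\mathbf{H}^s(\Omega)$, so Lemma \ref{forall-order-derham} applies with $\mathbf{u}=\nabla\times\boldsymbol{\xi}$, $\mathbf{v}=\boldsymbol{\xi}$, and $t=s$. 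This yields $\mathbf{w}\in\mathbf{H}^{s+1}(\Omega)$ with $\nabla\times\mathbf{w}=\nabla\times\boldsymbol{\xi}$ and
\begin{align*}
\|\mathbf{w}\|_{\mathbf{H}^{s+1}(\Omega)}\leq C\bigl(\|\nabla\times\boldsymbol{\xi}\|_{\mathbf{H}^s(\Omega)}+\|\boldsymbol{\xi}\|_{\mathbf{H}^s(\Omega)}\bigr)=C\|\boldsymbol{\xi}\|_{\mathbf{H}^s(\mathbf{curl};\Omega)}.
\end{align*}
I would then define $\mathsf{M}\boldsymbol{\xi}:=\mathbf{w}$, which immediately delivers the first estimate in \eqref{eq:Lipschitz-contractible-order_s+1-decomposion-estimate}.

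Next, since $\nabla\times(\boldsymbol{\xi}-\mathsf{M}\boldsymbol{\xi})=\mathbf{0}$ and $\boldsymbol{\xi}-\mathsf{M}\boldsymbol{\xi}\in\mathbf{H}(\mathbf{curl};\Omega)$, the exact sequence \eqref{eq:de_rham_sequence} on the contractible Lipschitz domain $\Omega$ guarantees the existence of $p\in\mathrm{H}^1(\Omega)$ such that $\nabla p=\boldsymbol{\xi}-\mathsf{M}\boldsymbol{\xi}$. To upgrade regularity, I would note that the right-hand side lies in $\mathbf{H}^s(\Omega)$ (as the sum of something in $\mathbf{H}^s(\Omega)$ and something in $\mathbf{H}^{s+1}(\Omega)\hookrightarrow\mathbf{H}^s(\Omega)$), so that $\nabla p\in\mathbf{H}^s(\Omega)$, and by the standard characterization $p\in\mathrm{H}^{s+1}(\Omega)$ (modulo constants). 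To render $\mathsf{N}$ single-valued and linear, I would impose the normalization $\int_\Omega p\,dx=0$ and set $\mathsf{N}\boldsymbol{\xi}:=p$; the Poincaré inequality then gives $\|p\|_{\mathrm{H}^{s+1}(\Omega)}\leq C\|\nabla p\|_{\mathbf{H}^s(\Omega)}\leq C\|\boldsymbol{\xi}\|_{\mathbf{H}^s(\mathbf{curl};\Omega)}$, yielding the second estimate and the identity $\mathsf{M}+\mathbf{grad}\circ\mathsf{N}=\mathsf{Id}$.

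The main obstacle is ensuring that the construction is genuinely linear and continuous, not just pointwise. For $\mathsf{M}$, this requires that the lifting produced by Lemma \ref{forall-order-derham} be realized by a bounded linear operator; this is indeed the content of the Costabel–McIntosh construction via Bogovski\u\i-type integral operators in \cite{key16}, and I would cite this explicitly rather than reprove it. For $\mathsf{N}$, linearity follows from the uniqueness enforced by the zero-mean normalization once $\mathsf{M}$ is linear. A minor secondary point is the regularity promotion $\nabla p\in\mathbf{H}^s(\Omega)\Rightarrow p\in\mathrm{H}^{s+1}(\Omega)$ for non-integer $s$, which on a Lipschitz domain is handled by the standard interpolation argument between integer orders together with the Poincaré inequality on the zero-mean subspace. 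With these two ingredients in place the continuity constants $C_1,C_2$ depend only on $\Omega$, as stated.
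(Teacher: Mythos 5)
Your proposal follows essentially the same route as the paper's own proof: define $\mathsf{M}$ by lifting $\nabla\times\boldsymbol{\xi}$ via Lemma \ref{forall-order-derham} with $\mathbf{v}=\boldsymbol{\xi}$, then obtain the scalar potential $p$ from the exactness of \eqref{eq:de_rham_sequence}, promote its regularity from $\nabla p\in\mathbf{H}^s(\Omega)$, and bound it. Your zero-mean normalization with the Poincar\'e inequality is in fact a cleaner way to make $\mathsf{N}$ well defined than the paper's appeal to the seminorm equivalence on $\mathrm{H}_0^1(\Omega)$ (which does not literally apply here since $p$ need not vanish on $\partial\Omega$), and your explicit remark that the linearity of $\mathsf{M}$ rests on the operator-valued nature of the Costabel--McIntosh lifting addresses a point the paper glosses over.
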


\begin{proof}
For any function $\boldsymbol{\xi}\in \mathbf{H}^s(\mathbf{curl};\Omega)$, 
let $\boldsymbol{\eta}=\nabla\times \boldsymbol{\xi}\in \mathbf{H}^s(\Omega)\cap \mathbf{H}(\mathrm{div}^0;\Omega)$. 
By Lemma \ref{forall-order-derham}, there exists $\mathbf{w}\in \mathbf{H}^{s+1}(\Omega)$ 
and a constant $C_1$, depending only on $\Omega$, such that $\nabla\times \mathbf{w}=\boldsymbol{\eta}$, 
and the following inequalities hold
\begin{align}\label{eq:Lipschitz-contractible-order_s+1-decomposion-estimate1}
\|\mathbf{w}\|_{\mathbf{H}^{s+1}(\Omega)}
\leq C_1(\|\boldsymbol{\eta}\|_{\mathbf{H}^s(\Omega)}+\|\boldsymbol{\xi}\|_{\mathbf{H}^s(\Omega)})
\leq \sqrt{2}C_1\|\boldsymbol{\xi}\|_{\mathbf{H}^s(\mathbf{curl};\Omega)}.
\end{align}
Then, let us define the operator $\mathsf{M}: \mathbf{H}^s(\mathbf{curl};\Omega)\mapsto 
\mathbf{H}^{s+1}(\Omega)$ as 
$\mathsf{M}\boldsymbol{\xi}=\mathbf{w}$.
From \eqref{eq:Lipschitz-contractible-order_s+1-decomposion-estimate1}, $\mathsf{M}$ is a bounded linear operator 
and satisfies $\nabla\times\mathsf{M}\boldsymbol{\xi}=\nabla\times \boldsymbol{\xi}$.

Since $\Omega$ is a bounded contractible Lipschitz domain in $\mathbb{R}^3$, and $\boldsymbol{\xi}
-\mathsf{M}\boldsymbol{\xi}\in \mathbf{H}(\mathbf{curl}^0;\Omega)$, it follows from Lemma \ref{thm:exact_de_rham_zero_bc} 
that there exists $p\in \mathrm{H}^1(\Omega)$ such that $\boldsymbol{\xi}-\mathsf{M}\boldsymbol{\xi}=\nabla p$.
Moreover, since $\boldsymbol{\xi}-\mathsf{M}\boldsymbol{\xi}\in \mathbf{H}^s(\Omega)$, we have $p\in \mathrm{H}^{s+1}(\Omega)$.
Similarly, we can define  the operator $\mathsf{N}: \mathbf{H}_0^s(\mathbf{curl};\Omega) 
\mapsto \mathrm{H}^{s+1}(\Omega)\cap \mathrm{H}_0^1(\Omega)$ as $\mathsf{N}\boldsymbol{\xi}=p$.
It is straightforward to show the operator $\mathsf{N}$ is linear. 
Furthermore, by the equivalence of the norm $\|\cdot\|_{\mathrm{H}^1(\Omega)}$ 
and the seminorm $|\cdot|_{\mathrm{H}^1(\Omega)}$ on $\mathrm{H}_0^1(\Omega)$, there is a constant $C$ such that $\mathsf{N}$ is bounded 
in the following sense 
\begin{align}
\label{eq:Lipschitz-contractible-order_s+1-decomposion-estimate2}
\|\mathsf{N}\boldsymbol{\xi}\|_{\mathrm{H}^{s+1}(\Omega)}
&\leq C \|\nabla(\mathsf{N}\boldsymbol{\xi})\|_{\mathbf{H}^s(\Omega)}
=C \|\boldsymbol{\xi}-\mathsf{M}\boldsymbol{\xi}\|_{\mathbf{H}^s(\Omega)}\nonumber\\
&\leq C\|\boldsymbol{\xi}\|_{\mathbf{H}^s(\Omega)}+C\|\mathsf{M}\boldsymbol{\xi}\|_{\mathbf{H}^s(\Omega)}
\leq C_2\|\boldsymbol{\xi}\|_{\mathbf{H}^s(\mathbf{curl};\Omega)},
\end{align}
where $C_2=C+CC_1$.
Combining \eqref{eq:Lipschitz-contractible-order_s+1-decomposion-estimate1} 
and \eqref{eq:Lipschitz-contractible-order_s+1-decomposion-estimate2} yields 
the estimate \eqref{eq:Lipschitz-contractible-order_s+1-decomposion-estimate}. 
The operator relation $\mathsf{M}+\mathbf{grad}\circ\mathsf{N}=\mathsf{Id}$
follows naturally from the definitions of $\mathsf{M}$ and $\mathsf{N}$.
\end{proof}

By utilizing Theorem \ref{thm:Lipschitz-contractible-order_s+1-decomposion}, $\mathbf{H}^s(\mathbf{curl};\Omega),s\geq0$ can be decomposed into
\begin{align}
\label{eq:Hcurl-order-s-decomposion}
\mathbf{H}^s(\mathbf{curl};\Omega)=\mathbf{H}^{s+1}(\Omega)+\nabla (\mathrm{H}^{s+1}(\Omega)),
\end{align}
where $s\geq 0$.
When tangential zero boundary condition is imposed on the Lipschitz polyhedron $\Omega$, we do not intend to consider the sum space in \eqref{eq:Hcurl-order-s-decomposion}, but rather aim to treat the boundary conditions for each potential function space separately.
Therefore, we introduce the lowest order trace theory for $\mathrm{H}^{s}(\Omega)$ on polyhedra.
For more information, please refer to \cite{key46}.
The faces of $\Omega$ are denoted by $f_j,1\leq j\leq J$, the edges by $e_\ell,1\leq \ell\leq L$,  and the vertices by $\bm{a}_i,1\leq i\leq I$.
Besides, $\mathbf{n}_j$ stands for the unit outward normal vector on each $f_j$.
Arbitrarily fix a vertex $\bm{a}$ of $\Omega$. For any two different faces $f_j$ and $f_{j^\prime}$ sharing $\bm{a}$, $1\leq j<j^\prime\leq J$, the intersection of the two planes containing $f_j$ and $f_{j^\prime}$ respectively is a line $d_{jj^\prime}$.
Let $\bm{\sigma}_{jj^\prime}$ stand for a unit vector in $d_{jj^\prime}$.
The set of all $\bm{\sigma}_{jj^\prime},1\leq j<j^\prime\leq J$ is denoted by $\Sigma_{\bm{a}}$.
For all $\bm{\sigma}$ in $\Sigma_{\bm{a}}$, we introduce the set $J(\bm{\sigma})$ of indices $j, 1\leq j\leq J$, such that $\bm{\sigma}$ is tangential to $f_j$.
For any $j$ in $J(\bm{\sigma})$, $\bm{\tau}_j$ stands for the unit vector tangential to $f_j$ which is directly orthogonal to $\bm{\sigma}$.

Given $s>1/2$, there exists a trace operator $\gamma^{(1)}$ which is continuous from $\mathrm{H}^{s}(\Omega)$ into 
$\mathbb{H}^{s(1)}(\partial\Omega):=\prod_{j=1}^{J}\mathrm{H}^{s-1/2}(f_j)$ \cite[Corollary 4.3]{key46}.
The trace operator $\gamma^{(1)}$ is not surjective, and the elements in its image space satisfy some compatibility conditions, whose geometric interpretation is reflected in the continuity at edges and vertices.
Assuming that $s-1/2$ is not an integer.
Then, a $J$-tuple $(g^1,\ldots,g^J)$ in $\mathbb{H}^{s+1(1)}(\partial\Omega)$ is the image of a function in $\mathrm{H}^{s+1}(\Omega)$ by the trace mapping $\gamma^{(1)}$ if and only if \cite[Theorem 6.1]{key46} \\
(i)\ For all $\ell,1\leq \ell\leq L$, suppose $f_{j_-(\ell)}$ and $f_{j_+(\ell)}$ are two different faces sharing $e_\ell$, 
it holds that 
\begin{align}
\label{trace-condition1}
g^{j_-(\ell)}(\bm{x})=g^{j_+(\ell)}(\bm{x}) \quad\mathrm{for}\ \mathrm{a.e.}\ \bm{x}\in e_\ell,
\end{align}
(ii)\ For all integers $n, 0\leq n<s-1/2$ and $m, 0\leq m<s-1/2-n$, for all vertex $\bm{a}$ 
and $\bm{\sigma}$ in $\Sigma_{\bm{a}}$, for any differential operators $\mathcal{L}_j=c_j\partial_{\bm{\tau}_j}^n, j\in J(\bm{\sigma})$ 
such that $\sum_{j\in J(\bm{\sigma})}\mathcal{L}_j=0$, the following conditions hold: 
\begin{align}
\label{trace-condition2}
\partial_{\bm{\sigma}}^m\sum_{j\in J(\bm{\sigma})}(\mathcal{L}_jg^j)(\bm{a})=0.
\end{align}
The space $\widetilde{\mathbb{H}}^{s+1(1)}(\partial\Omega)$ of all functions in $\mathbb{H}^{s+1(1)}(\partial\Omega)$ satisfying all compatibility conditions \eqref{trace-condition1} and \eqref{trace-condition2} is closed in $\mathbb{H}^{s+1(1)}(\partial\Omega)$.
Moreover, the operator $\gamma^{(1)}$ is continuous from $\mathrm{H}^{s+1}(\Omega)$ onto $\widetilde{\mathbb{H}}^{s+1(1)}(\partial\Omega)$ and admits a continuous inverse from $\widetilde{\mathbb{H}}^{s+1(1)}(\partial\Omega)$ into $\mathrm{H}^{s+1}(\Omega)$ \cite[Corollary 6.2]{key46}.

Combining Theorem \ref{thm:Lipschitz-contractible-order_s+1-decomposion} with the trace theory on polyhedra, we obtain the following regular decomposition theorem of the space $\mathbf{H}_0^s(\mathbf{curl};\Omega)$, which guides the convergence analysis for the numerical method in this paper.

\begin{thm}
\label{thm:Lipschitz-contractible-order_s+1-decomposion-zeroboundary}
Let $\Omega$ be a bounded contractible Lipschitz polyhedron in $\mathbb{R}^3$ and $s>1/2$ such that $s-1/2$ is not an integer. 
Then there exist continuous linear operators $\mathsf{P}:\mathbf{H}_0^s(\mathbf{curl};\Omega)\mapsto \mathbf{H}^{s+1}(\Omega)
\cap \mathbf{H}_0(\mathbf{curl};\Omega)$ and $\mathsf{Q}:\mathbf{H}_0^s(\mathbf{curl};\Omega)\mapsto \mathrm{H}^{s+1}(\Omega)
\cap \mathrm{H}_0^1(\Omega)$ such that 
\begin{align}
\mathsf{P}+\mathbf{grad}\circ\mathsf{Q}=\mathsf{Id},
\end{align}
where $\mathsf{Id}$ denotes the identity mapping 
on $\mathbf{H}_0^s(\mathbf{curl};\Omega)$.
Furthermore, for any $\boldsymbol{\xi}\in \mathbf{H}_0^s(\mathbf{curl};\Omega)$, 
the following inequalities hold 
\begin{align}\label{eq:Lipschitz-contractible-order_s+1-decomposion-estimate-zeroboundary}
\|\mathsf{P}\boldsymbol{\xi}\|_{\mathbf{H}^{s+1}(\Omega)}
\leq C_1\|\boldsymbol{\xi}\|_{\mathbf{H}^s(\mathbf{curl};\Omega)}, \quad
\|\mathsf{Q}\boldsymbol{\xi}\|_{\mathrm{H}^{s+1}(\Omega)}
\leq C_2\|\boldsymbol{\xi}\|_{\mathbf{H}^s(\mathbf{curl};\Omega)},
\end{align}
where $C_1$ and $C_2$ are constants depending on the polyhedron $\Omega$.
\end{thm}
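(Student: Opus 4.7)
The plan is to refine the non-boundary decomposition from Theorem \ref{thm:Lipschitz-contractible-order_s+1-decomposion} by subtracting from the scalar potential a high-regularity lifting of its own boundary trace, producing a new pair of potentials that inherit $\mathbf{H}^{s+1}$-regularity and simultaneously satisfy the required homogeneous boundary conditions. First I would apply that theorem to $\boldsymbol{\xi}\in\mathbf{H}_0^s(\mathbf{curl};\Omega)\subset\mathbf{H}^s(\mathbf{curl};\Omega)$, obtaining $\mathbf{w}:=\mathsf{M}\boldsymbol{\xi}\in\mathbf{H}^{s+1}(\Omega)$ and $q:=\mathsf{N}\boldsymbol{\xi}\in\mathrm{H}^{s+1}(\Omega)$ with $\boldsymbol{\xi}=\mathbf{w}+\nabla q$. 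The essential extra input from the boundary condition $\boldsymbol{\xi}\times\mathbf{n}=\mathbf 0$ is that on every face $f_j$ of $\Omega$ one has the tangential identity $\nabla_T(q|_{f_j})=-(\mathbf{w}|_{f_j})_T$. Because $\mathbf{w}\in\mathbf{H}^{s+1}(\Omega)$ has face trace in $\mathbf{H}^{s+1/2}(f_j)$, this identity upgrades the face regularity of $q$ from the generic $\mathrm{H}^{s+1/2}(f_j)$ up to $\mathrm{H}^{s+3/2}(f_j)$---one order higher than what $q\in\mathrm{H}^{s+1}(\Omega)$ alone would give.

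Next I would invoke the continuous inverse trace theorem of \cite[Corollary 6.2]{key46}, applied one regularity index higher, to produce $\phi\in\mathrm{H}^{s+2}(\Omega)$ with $\phi|_{\partial\Omega}=q|_{\partial\Omega}$ and the estimate $\|\phi\|_{\mathrm{H}^{s+2}(\Omega)}\leq C(\|\mathbf{w}\|_{\mathbf{H}^{s+1}(\Omega)}+\|q\|_{\mathrm{H}^{s+1}(\Omega)})$. Applicability requires verifying that the face-trace family $(q|_{f_j})_j$ lies in $\widetilde{\mathbb{H}}^{s+2(1)}(\partial\Omega)$. The edge conditions \eqref{trace-condition1} are immediate since $q$ is a single function in $\mathrm{H}^{s+1}(\Omega)$, and the vertex conditions \eqref{trace-condition2} of total derivative order strictly below $s+1/2$ are already carried by $q\in\mathrm{H}^{s+1}(\Omega)$. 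The additional vertex conditions of order in $[s+1/2,\,s+3/2)$ should reduce, via the identity $\nabla_T(q|_{f_j})=-(\mathbf{w}|_{f_j})_T$ used to trade one tangential derivative of $q$ for a lower-order derivative of $\mathbf{w}$, to vertex compatibility conditions for $\mathbf{w}$ that are already guaranteed by $\mathbf{w}\in\mathbf{H}^{s+1}(\Omega)$. This vertex-compatibility reduction, for which the hypothesis that $s-1/2$ is not an integer is essential so that no new conditions appear exactly at the critical order, is the main technical obstacle of the argument.

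Finally I would define $\mathsf{P}\boldsymbol{\xi}:=\mathbf{w}+\nabla\phi$ and $\mathsf{Q}\boldsymbol{\xi}:=q-\phi$. Linearity and the identity $\mathsf{P}+\mathbf{grad}\circ\mathsf{Q}=\mathsf{Id}$ hold by construction. The membership $\mathsf{Q}\boldsymbol{\xi}\in\mathrm{H}^{s+1}(\Omega)\cap\mathrm{H}_0^1(\Omega)$ follows from $\phi\in\mathrm{H}^{s+2}(\Omega)\subset\mathrm{H}^{s+1}(\Omega)$ together with $\phi|_{\partial\Omega}=q|_{\partial\Omega}$, while the $\mathbf{H}^{s+1}$-regularity of $\mathsf{P}\boldsymbol{\xi}$ is clear from $\nabla\phi\in\mathbf{H}^{s+1}(\Omega)$; the vanishing tangential trace of $\mathsf{P}\boldsymbol{\xi}$ then follows face-wise from the computation $\mathsf{P}\boldsymbol{\xi}\times\mathbf{n}_j=\mathbf{w}\times\mathbf{n}_j+\nabla_T(\phi|_{f_j})\times\mathbf{n}_j=\mathbf{w}\times\mathbf{n}_j+\nabla_T(q|_{f_j})\times\mathbf{n}_j=\mathbf 0$. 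The stability bounds \eqref{eq:Lipschitz-contractible-order_s+1-decomposion-estimate-zeroboundary} will then be obtained by combining \eqref{eq:Lipschitz-contractible-order_s+1-decomposion-estimate} with the continuity of the inverse trace operator.
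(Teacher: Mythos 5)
Your proposal follows essentially the same route as the paper's own proof: apply Theorem \ref{thm:Lipschitz-contractible-order_s+1-decomposion}, use the tangential trace condition to derive the face identity $\nabla_{f_j}(q|_{f_j})=-(\mathbf{n}_j\times\mathbf{w}|_{f_j})\times\mathbf{n}_j$ and thereby upgrade the face traces of $q$ to $\mathrm{H}^{s+3/2}(f_j)$, verify the compatibility conditions for $\widetilde{\mathbb{H}}^{s+2(1)}(\partial\Omega)$ by trading tangential derivatives via that identity, lift to $\phi\in\mathrm{H}^{s+2}(\Omega)$ with the inverse trace operator, and set $\mathsf{P}\boldsymbol{\xi}=\mathbf{w}+\nabla\phi$, $\mathsf{Q}\boldsymbol{\xi}=q-\phi$. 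The one step you flag as the main obstacle (the higher-order vertex conditions) is handled in the paper exactly by the derivative-trading reduction you describe, so the plan is sound and matches the published argument.
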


\begin{proof}
For any $\boldsymbol{\xi}\in \mathbf{H}_0^s(\mathbf{curl};\Omega)$, there exists a decomposition $\boldsymbol{\xi}=\mathbf{u}+\nabla p$ by Theorem \ref{thm:Lipschitz-contractible-order_s+1-decomposion}, where $\mathbf{u}=\mathsf{M}\boldsymbol{\xi}\in\mathbf{H}^{s+1}(\Omega)$ and $p=\mathsf{N}\boldsymbol{\xi}\in\mathrm{H}^{s+1}(\Omega)$.  
The image of $p$ under the trace operator $\gamma^{(1)}$ is denoted as the $J$-tuple $(p^1,\ldots,p^J)$, where $p^j$ is 
the trace of $p$ on the face $f_j$, $1\leq j\leq J$. 
Since $\mathrm{H}^{s+1}(\Omega)$ is continuously embedded into $\mathrm{C}(\overline{\Omega})$ when $s>1/2$ \cite[Theorem 3.5]{key1}, it follows that $p|_{f_j}=p^j$, $1\leq j\leq J$.
Similar conclusions also hold for $\mathbf{u}$, which implies that $\mathbf{u}|_{f_j}\in \mathbf{H}^{s+1/2}(f_j)$.

On each face $f_j,1\leq j\leq J$, $\boldsymbol{\xi}\in \mathbf{H}_0(\mathbf{curl};\Omega)$ yields that $(\mathbf{u}+\nabla p)|_{f_j}\times \mathbf{n}_j=\mathbf{0}$, and consequently, the surface gradient of $p^j$ on $f_j$ is
\begin{align}
\label{eq:surface gradient-pj}
\nabla_{f_j}(p^j)=\nabla_{f_j}(p|_{f_j})=(\mathbf{n}_j\times(\nabla p)|_{f_j})\times\mathbf{n}_j
=-(\mathbf{n}_j\times\mathbf{u}|_{f_j})\times\mathbf{n}_j\in \mathbf{H}^{s+1/2}(f_j),
\end{align}
which implies that $p^j\in \mathrm{H}^{s+3/2}(f_j)$ \cite[Section 3.4]{key1}.
Since $(p^1,\ldots,p^J)=\gamma^{(1)}p$, condition \eqref{trace-condition1} naturally holds.
Furthermore, for all integers $1\leq n<s+1/2$ and $0\leq m<s+1/2-n$, for each vertex $\bm{a}$ and each vector $\bm{\sigma}\in\Sigma_{\bm{a}}$, and for any differential operators $\mathcal{L}_j=c_j\partial_{\bm{\tau}_j}^n, j\in J(\bm{\sigma})$ satisfying 
$\sum_{j\in J(\bm{\sigma})}\mathcal{L}_j=0$, it follows from \eqref{eq:surface gradient-pj} that
\begin{align}
&\partial_{\bm{\sigma}}^m\sum_{j\in J(\bm{\sigma})}(\mathcal{L}_jp^j)(\bm{a})
=\partial_{\bm{\sigma}}^m\sum_{j\in J(\bm{\sigma})}(c_j\partial_{\bm{\tau}_j}^np^j)(\bm{a})
=\partial_{\bm{\sigma}}^m\sum_{j\in J(\bm{\sigma})}(c_j\partial_{\bm{\tau}_j}^{n-1}(\nabla_{f_j}(p^j)\cdot\bm{\tau}_j))(\bm{a}) \nonumber\\
=&\partial_{\bm{\sigma}}^m\sum_{j\in J(\bm{\sigma})}(c_j\partial_{\bm{\tau}_j}^{n-1}(\nabla p\cdot\bm{\tau}_j))(\bm{a})
=\partial_{\bm{\sigma}}^m\sum_{j\in J(\bm{\sigma})}(c_j\partial_{\bm{\tau}_j}^{n}p)(\bm{a})
=\partial_{\bm{\sigma}}^m\sum_{j\in J(\bm{\sigma})}(\mathcal{L}_jp)(\bm{a})=0.
\end{align}
A similar equality holds for $0\leq n\leq s+1/2$ and $1\leq m < s+1/2-n$, 
since the vector $\bm{\sigma}$ is also tangent to $f_j$. 
Finally, \eqref{trace-condition2} is trivial for  $n = m = 0$.
Therefore, the $J$-tuple $(p^1,\ldots,p^J)\in\widetilde{\mathbb{H}}^{s+2(1)}(\partial\Omega)$ is the image of some function $\phi\in\mathrm{H}^{s+2}(\Omega)$ by the trace  operator $\gamma^{(1)}$.

Let us define the operators $\mathsf{P}$ and $\mathsf{Q}$ as 
$\mathsf{P}\boldsymbol{\xi}=\mathbf{u}+\nabla\phi, \mathsf{Q}\boldsymbol{\xi}=\nabla(p-\phi)$, where $\mathbf{u}+\nabla\phi\in\mathbf{H}^{s+1}(\Omega)$ and $p-\phi\in\mathrm{H}^{s+1}(\Omega)$.
Evidently, $\mathsf{P}+\mathbf{grad}\circ\mathsf{Q}=\mathsf{Id}$.
Besides, on each face $f_j,1\leq j\leq J$, $(p-\phi)|_{f_j}=p^j-p^j=0$ yields that $p-\phi\in\mathrm{H}_0^1(\Omega)$.
Consequently, $\mathbf{u}+\nabla\phi=\boldsymbol{\xi}-\nabla(p-\phi)\in\mathbf{H}_0(\mathbf{curl};\Omega)$.
Furthermore, from \eqref{eq:surface gradient-pj} and the continuity of the trace operator $\gamma^{(1)}$ and its inverse, it follows that $\phi$ is bounded 
in the following sense 
\begin{align}
\label{eq:bounded-phi}
&\|\phi\|_{\mathrm{H}^{s+2}(\Omega)}
\lesssim \|\gamma^{(1)}\phi\|_{\mathbb{H}^{s+2(1)}(\partial\Omega)}
\lesssim \sum_{j=1}^{J}\|p^j\|_{\mathrm{H}^{s+3/2}(f_j)}
\lesssim \sum_{j=1}^{J}\left(\|p^j\|_{\mathrm{L}^{2}(f_j)}+\|\nabla_{f_j}p^j\|_{\mathbf{H}^{s+1/2}(f_j)} \right) \nonumber\\
&\leq \sum_{j=1}^{J}\left(\|p\|_{\mathrm{L}^{2}(f_j)}+\|\mathbf{u}\|_{\mathbf{H}^{s+1/2}(f_j)} \right)
\lesssim \|p\|_{\mathrm{L}^{2}(\Omega)}+\|\mathbf{u}\|_{\mathbf{H}^{s+1}(\Omega)}
\lesssim \|\boldsymbol{\xi}\|_{\mathbf{H}^s(\mathbf{curl};\Omega)},
\end{align}
where the notation $\lesssim$ indicates that the left-hand side is bounded above by a positive constant multiple of the right-hand side, with the constant being independent of the functions.
Combining \eqref{eq:Lipschitz-contractible-order_s+1-decomposion-estimate} 
and \eqref{eq:bounded-phi} immediately yields 
the estimate \eqref{eq:Lipschitz-contractible-order_s+1-decomposion-estimate-zeroboundary}. 
\end{proof}

According to Theorem \ref{thm:Lipschitz-contractible-order_s+1-decomposion-zeroboundary}, one can naturally decompose $\mathbf{H}_0^s(\mathbf{curl};\Omega)$ into
\begin{align}
\label{eq:H_0curl-order-s-decomposion}
\mathbf{H}_0^s(\mathbf{curl};\Omega)=\mathbf{H}^{s+1}(\Omega)\cap \mathbf{H}_0(\mathbf{curl};\Omega)
+\nabla (\mathrm{H}^{s+1}(\Omega)\cap \mathrm{H}_0^1(\Omega))
\end{align}
when $s-1/2>0$ is not an integer.
This is a new regular decomposition proposed in this paper, 
and it should be noted that the boundary condition of the vector potential function differs from those in decomposition \eqref{order-1-decomposion}.
The significance of this distinction lies in the fact that the regular decomposition \eqref{eq:H_0curl-order-s-decomposion} can be applied to discrete schemes with Lagrange type elements of arbitrary order, which is capable of achieving corresponding high order convergence rates.
If $s-1/2$ is a positive integer, to avoid more complex discussions, we consider a sufficiently small positive number $\delta$ and regard the functions in $\mathbf{H}_0^s(\mathbf{curl};\Omega)$ as being decomposed within $\mathbf{H}_0^{s-\delta}(\mathbf{curl};\Omega)$.
Due to the arbitrariness of $\delta$, the subsequent discussion will directly employ \eqref{eq:H_0curl-order-s-decomposion} to represent the regular decomposition of $\mathbf{H}_0^s(\mathbf{curl};\Omega)$, which will not affect the convergence order of the interpolation error.
Moreover, from the decomposition \eqref{order-1-decomposion}, it can be seen that \eqref{eq:H_0curl-order-s-decomposion} also holds when $s=0$.

At the end of this section, we recall the spectral approximation properties
of compact operators as presented in \cite{key19}. 
Let $T:H\to H$ be a self-adjoint compact operator on a Hilbert space $H$. 
Denote by $\sigma(T)$ and $\rho(T)$ the spectrum and resolvent set of $T$, respectively.
According to the Riesz–Schauder theory for compact operators, $\sigma(T)$ 
is a countable set and the nonzero elements of $\sigma(T)$ are eigenvalues of $T$ 
with finite multiplicity. Furthermore, any sequence of distinct 
elements in $\sigma(T)$ has $0$ as its only accumulation point.

Consider a sequence of self-adjoint compact operators $T_n:H\to H$, $n\geq 1$. Assume $\{T_n\}_{n=1}^\infty$ 
converges pointwise to $T$, which implies
\begin{align}
\lim\limits_{n\to \infty}\|T_nf-Tf\|_H=0,\quad\forall f\in H.
\end{align}
Furthermore, assume $\{T_n\}_{n=1}^\infty$ is collectively compact in the following sense.
\begin{Def}\label{def:collectively-compact}
A sequence of operators $\{T_n\}_{n=1}^\infty$ is called collectively compact if for any bounded subset $B\subset H$, the set
\begin{align}
W=\bigcup_{n=1}^\infty T_n(B)=\Big\{T_nf:\forall f\in B, ~\forall n\geq1\Big\}
\end{align}
is relatively compact in $H$.
\end{Def}

Let $0\neq\mu \in\sigma(T)$ be a nonzero eigenvalue of $T$.
Since $T$ is self-adjoint, the algebraic multiplicity of $\mu $ equals its geometric multiplicity, both being $m=\dim\ker(\mu -T)$.
Let $\Gamma$ be a circle in the complex plane centered at $\mu $ with sufficiently small 
radius which lies in $\rho(T)$ and encloses no other points of $\sigma(T)$.
The spectral projection operator associated with the operator $T$ and the eigenvalue $\mu $ is defined as
\begin{align}
E=E(\mu )=\frac{1}{2\pi\mathrm{i}}\int_\Gamma(z-T)^{-1}dz.
\end{align}
The operator $E$ is the projection onto the eigenspace of $T$ corresponding to $\mu $, that is, $\mathrm{Im}(E)=\ker(\mu -T)$.
For $n$ sufficiently large, $\Gamma\subset\rho(T_n)$ and the open disk bounded by $\Gamma$ 
contains exactly $m$ eigenvalues $\mu _{n, 1}, \ldots, \mu _{n, m}$ of $T_n$.
Based on \cite[Section 3]{key19}, the following two spectral approximation results can be derived.

\begin{lem}
\label{selfadjoint-operator-eigenvalue-order}
Assume $H$ is a Hilbert space, $T$ and $T_n$, $n\geq 1$ are 
self-adjoint compact operators from $H$ to $H$.
Furthermore, assume $\{T_n\}_{n=1}^\infty$ is collectively compact and
converges pointwise to $T$.
Let $\phi_1, \cdots, \phi_m$ be an orthonormal basis of $\mathrm{Im}(E(\mu ))$.
Then there exists a constant $C$ such that the following error estimate holds for any $\mu _{n, j}$, $1\leq j\leq m$, 
\begin{eqnarray}
|\mu -\mu _{n, j}|\leq
C\left\{ \sum\limits_{i, j=1}^m \left|\left((T-T_n)\phi_i, \phi_j\right)_H\right| 
+ \left\|(T-T_n)|_{\mathrm{Im}(E)}\right\|^2 \right\},
\end{eqnarray}
where $\mathrm{Im}(E)=\ker(\mu -T)$ and $(T-T_n)|_{\mathrm{Im}(E)}$ denotes the restrictions of the operator $T-T_n$ to the closed subspace $\mathrm{Im}(E)$.
\end{lem}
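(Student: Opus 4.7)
The plan is to follow the classical route of Osborn's spectral approximation theory for self-adjoint compact operators, reducing the infinite-dimensional eigenvalue perturbation to a finite-dimensional problem on the $m$-dimensional invariant subspace $\mathrm{Im}(E)$. First I would show that the spectral projection $E_n=\frac{1}{2\pi\mathrm{i}}\int_\Gamma(z-T_n)^{-1}dz$ is well-defined for all $n$ sufficiently large: since $\Gamma\subset\rho(T)$ is compact, the pointwise convergence $T_n\to T$ together with collective compactness forces $(z-T_n)^{-1}\to(z-T)^{-1}$ uniformly in $z\in\Gamma$, so $\|E_n-E\|\to 0$. In particular $\dim\mathrm{Im}(E_n)=m$ and the restriction $E|_{\mathrm{Im}(E_n)}:\mathrm{Im}(E_n)\to\mathrm{Im}(E)$ is an isomorphism for $n$ large.

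The next step is to transport the eigenvalues $\mu_{n,1},\ldots,\mu_{n,m}$ to the fixed $m$-dimensional space $\mathrm{Im}(E)$. Define $S_n:=E T_n E|_{\mathrm{Im}(E)}:\mathrm{Im}(E)\to\mathrm{Im}(E)$, which is self-adjoint on the finite-dimensional inner-product space $\mathrm{Im}(E)$. A short argument using the isomorphism $E:\mathrm{Im}(E_n)\to\mathrm{Im}(E)$ and the identity $E_nT_n=T_nE_n$ shows that, up to a similarity of order $\|E-E_n\||_{\mathrm{Im}(E)}\|=O(\|(T-T_n)|_{\mathrm{Im}(E)}\|)$, the operator $S_n$ has eigenvalues $\mu_{n,1},\ldots,\mu_{n,m}$. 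Consequently, by the self-adjoint min-max theorem applied to this finite-dimensional perturbation, each $\mu_{n,j}$ differs from some eigenvalue of $S_n$ by at most $C\|(T-T_n)|_{\mathrm{Im}(E)}\|^2$.

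With this reduction in place, it remains to estimate the eigenvalues of $S_n$ against $\mu$. Using $T\phi_i=\mu\phi_i$ and $E\phi_i=\phi_i$, the matrix of $S_n-\mu\,\mathrm{Id}|_{\mathrm{Im}(E)}$ in the orthonormal basis $\{\phi_i\}_{i=1}^m$ has entries
\begin{equation*}
\bigl(S_n\phi_i-\mu\phi_i,\phi_j\bigr)_H
=\bigl((T_n-T)\phi_i,\phi_j\bigr)_H,
\end{equation*}
since $E^\ast=E$ absorbs into $\phi_j$. The spectral radius of a Hermitian matrix is bounded by the sum of absolute values of its entries (or by the Frobenius norm), so every eigenvalue of $S_n$ lies within $\sum_{i,j=1}^m|((T-T_n)\phi_i,\phi_j)_H|$ of $\mu$. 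Combining this with the transport estimate from the previous step yields the claimed bound.

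The main obstacle will be the second step: carefully controlling how the spectrum of $T_n|_{\mathrm{Im}(E_n)}$ is related to that of $S_n=ET_nE|_{\mathrm{Im}(E)}$, so that the transfer error is genuinely quadratic in $\|(T-T_n)|_{\mathrm{Im}(E)}\|$ rather than only linear. This quadratic gain is what distinguishes Osborn's estimate from naive operator-norm perturbation, and it rests on exploiting self-adjointness together with the fact that the first-order discrepancy between $E_n|_{\mathrm{Im}(E)}$ and the identity on $\mathrm{Im}(E)$ is itself of order $\|(T-T_n)|_{\mathrm{Im}(E)}\|$, so its contribution to the eigenvalue error appears only through products of two such factors.
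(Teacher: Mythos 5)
The paper offers no proof of this lemma: it is quoted from the spectral approximation theory of collectively compact, pointwise-convergent sequences of self-adjoint compact operators in \cite{key19}, so there is no in-paper argument to compare against, and your sketch reconstructs that standard (Osborn-type) argument correctly in outline. The reduction to the $m$-dimensional space $\mathrm{Im}(E)$, the exact matrix-entry identity $(S_n\phi_i-\mu\phi_i,\phi_j)_H=((T_n-T)\phi_i,\phi_j)_H$ (using $E\phi_j=\phi_j$ and $E^\ast=E$), and the Hermitian perturbation bound are all right. Two points should be tightened. First, collective compactness plus pointwise convergence does not in general give norm convergence of the resolvents, hence your claim $\|E_n-E\|\to 0$ is stronger than what the hypotheses directly yield; what you actually need (and what does hold) is strong convergence of $E_n$ together with the quantitative bound $\|(E-E_n)|_{\mathrm{Im}(E)}\|\le C\|(T-T_n)|_{\mathrm{Im}(E)}\|$, obtained from $E-E_n=\frac{1}{2\pi\mathrm{i}}\int_\Gamma(z-T_n)^{-1}(T_n-T)(z-T)^{-1}\,dz$ and the invariance of $\mathrm{Im}(E)$ under $(z-T)^{-1}$; since $\mathrm{Im}(E)$ is finite-dimensional and $\dim\mathrm{Im}(E_n)=m$ is already part of the standing assumptions, this suffices for the bijectivity you invoke. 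Second, the step you flag as the main obstacle does close, and the mechanism deserves to be made explicit: for $u\in\mathrm{Im}(E)$ write $u=E_nu+(I-E_n)u$; because $T_n$ is self-adjoint, $E_n$ is an \emph{orthogonal} projection onto a $T_n$-invariant subspace, so the cross term $(T_nE_nu,(I-E_n)u)_H$ vanishes exactly, giving $(T_nu,u)_H=(T_nE_nu,E_nu)_H+O\bigl(\|(E-E_n)|_{\mathrm{Im}(E)}\|^2\bigr)$ and $\|E_nu\|^2=\|u\|^2+O\bigl(\|(E-E_n)|_{\mathrm{Im}(E)}\|^2\bigr)$; the min-max principle applied to the compressions $ET_nE|_{\mathrm{Im}(E)}$ and $T_n|_{\mathrm{Im}(E_n)}$ then matches their eigenvalues to second order, which is precisely the quadratic gain your bound requires.
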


\begin{lem}
\label{operator-eigenvector-order}
The assumptions are the same as in Lemma \ref{selfadjoint-operator-eigenvalue-order}.
Let $\mu _n$ be an eigenvalue of $T_n$ such that $\lim\limits_{n\to\infty}\mu _n=\mu $. Suppose for each n that $\xi_n$ is a unit function satisfying $(\mu _n-T_n) \xi_n=0$.
Then there exists $\xi\in\mathrm{Im}(E(\mu ))$ and a constant $C$ such that
\begin{align}
\|\xi-\xi_n\|_H\leq C\|(T-T_n)|_{\mathrm{Im}(E)}\|, \quad\forall n\geq 1.
\end{align}
\end{lem}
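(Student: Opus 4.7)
The plan is to take $\xi := E\xi_n$, which lies in $\mathrm{Im}(E)$ by construction, and to bound $\|\xi_n-\xi\|=\|(I-E)\xi_n\|$. Since $T$ and $T_n$ are self-adjoint with real spectrum, $E$ and $E_n$ are orthogonal projections. For $n$ sufficiently large, $\mu_n$ lies strictly inside $\Gamma$, so the spectral projection $E_n$ of $T_n$ associated with the disk enclosed by $\Gamma$ satisfies $E_n\xi_n=\xi_n$. Consequently $(I-E)\xi_n = E_n\xi_n - E\xi_n = (E_n-E)\xi_n$, and the problem reduces to estimating $(E_n-E)\xi_n$ in terms of $\|(T-T_n)|_{\mathrm{Im}(E)}\|$.

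The next step is to invoke the contour representation together with the second resolvent identity,
\[
(E_n-E)\xi_n = \frac{1}{2\pi\mathrm{i}}\int_\Gamma (z-T_n)^{-1}(T_n-T)(z-T)^{-1}\xi_n\,dz,
\]
and to use the decomposition $(z-T)^{-1}\xi_n = (z-\mu)^{-1}E\xi_n + (z-T)^{-1}(I-E)\xi_n$, which is legitimate because $T$ acts as $\mu I$ on $\mathrm{Im}(E)$. This splits the right-hand side as $A+B$, where $A$ depends only on $(T_n-T)E\xi_n$ while $B$ still carries the factor $(I-E)\xi_n$ that one is trying to control. For $A$ I would combine the pointwise estimate $\|(T-T_n)E\xi_n\|\leq \|(T-T_n)|_{\mathrm{Im}(E)}\|$ with uniform boundedness of $(z-T_n)^{-1}$ on $\Gamma$ for large $n$ -- a consequence of $\mu_n\to\mu$ and the Anselone-type stability of collectively compact sequences -- to conclude $\|A\|\leq C\|(T-T_n)|_{\mathrm{Im}(E)}\|$.

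The delicate step is controlling $B$ and closing the loop. My plan is to convert the collectively compact hypothesis into an operator-norm smallness estimate of the form $\|B\|\leq\varepsilon_n\|(I-E)\xi_n\|$ with $\varepsilon_n\to 0$, and then to absorb $B$ into the left-hand side for $n$ large. The key inputs are the Anselone-type convergences $\|(T-T_n)T\|\to 0$ and $\|(T-T_n)T_n\|\to 0$, which follow from collective compactness together with pointwise convergence even when $\|T-T_n\|$ itself does not vanish. After substituting these into the integrand of $B$ and using uniform resolvent bounds on $\Gamma$, one obtains the desired smallness and recovers $\|(I-E)\xi_n\|\leq C\|(T-T_n)|_{\mathrm{Im}(E)}\|$.

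The main obstacle will be precisely this absorption argument for $B$: because pointwise convergence $T_n\to T$ does not deliver $\|T-T_n\|\to 0$, one cannot close the estimate by a naive triangle inequality, and the whole argument rests on squeezing the collectively compact structure through the resolvent integral. If a direct absorption proves brittle, a fallback route is to use self-adjointness to rewrite $\|(I-E)\xi_n\|$ via its dual pairing with unit vectors in $\ker E$ and to transfer $T_n-T$ onto the test function, which sidesteps the iteration at the price of extra bookkeeping at each vertex of the contour estimate.
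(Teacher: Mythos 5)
The paper does not actually prove this lemma; it is imported from the cited reference \cite{key19}, where the standard (Osborn/Anselone) argument chooses $\xi$ differently from you: one first shows that $E_n|_{\mathrm{Im}(E)}$ is, for $n$ large, a bijection onto $\mathrm{Im}(E_n)$ with uniformly bounded inverse, sets $\xi=(E_n|_{\mathrm{Im}(E)})^{-1}\xi_n\in\mathrm{Im}(E)$, and writes $\xi_n-\xi=(E_n-E)\xi$. Since this $\xi$ lies in $\mathrm{Im}(E)$, only the restricted bound $\|(E_n-E)|_{\mathrm{Im}(E)}\|\le C\|(T-T_n)|_{\mathrm{Im}(E)}\|$ is ever needed and no absorption argument arises. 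Your choice $\xi=E\xi_n$ forces you to estimate $(E_n-E)\xi_n$ with $\xi_n\notin\mathrm{Im}(E)$, which is exactly why the term $B$ appears; the route is genuinely different but it does work, at the price of one extra operator-norm fact, namely $\|E_n(I-E)\|=\|(E-E_n)E_n\|\le C\|(T-T_n)T_n\|\to0$.

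The one place where your sketch is thinner than it should be is the claimed smallness of $B$. It is true that $\|B\|\le\varepsilon_n\|(I-E)\xi_n\|$ with $\varepsilon_n\to0$, but this hinges on expanding the \emph{left} resolvent factor, $(z-T_n)^{-1}=z^{-1}+z^{-1}(z-T_n)^{-1}T_n$: the second piece yields the desired $O(\|(T-T_n)T_n\|)\,\|(I-E)\xi_n\|$ contribution, while the first piece produces $(T_n-T)\,\frac{1}{2\pi\mathrm{i}}\int_\Gamma z^{-1}(z-T)^{-1}(I-E)\xi_n\,dz$, which vanishes by Cauchy's theorem because $z\mapsto z^{-1}(z-T)^{-1}(I-E)$ is analytic inside $\Gamma$ (this is where $\mu\neq0$ and the isolation of $\mu$ in $\sigma(T)$ enter). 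If one instead expands $(z-T)^{-1}$ on the right, the leading term is $\bigl(\frac{1}{2\pi\mathrm{i}}\int_\Gamma z^{-1}(z-T_n)^{-1}dz\bigr)(T_n-T)(I-E)\xi_n$; the first factor is \emph{not} small (it picks up residues at the eigenvalues $\mu_{n,j}$ inside $\Gamma$) and $\|(T_n-T)(I-E)\xi_n\|$ is not $o(\|(I-E)\xi_n\|)$ under mere pointwise convergence, so the naive version of the absorption fails. You should make this cancellation explicit. Two further small points: the absorption only gives the estimate for $n$ sufficiently large (which matches the cited result; the ``$\forall n\ge1$'' in the statement is a harmless abuse), and your $\xi=E\xi_n$ depends on $n$, which is consistent with how the lemma is invoked in Theorem \ref{R-R_h-eigenvector-approximate_order}.
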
 

\section{Discretization for Maxwell eigenvalue problem}\label{Section_Problem}
In this section, based on the regular decomposition, we deduce the continuous 
mixed variational form for the Maxwell eigenvalue problem \eqref{maxwell-eigenvalue-problem}.  
Then the discretization by using continuous finite elements is designed for 
the mixed form of the Maxwell eigenvalue problem. 


It is well known that the weak form of the primal variable for 
the Maxwell eigenvalue problem can be defined as follows.  
\begin{problem}
\label{primal-problem}
Find $\lambda\in \mathbb{R}$ and $\boldsymbol{\xi}\in \mathcal{H}_0:=\mathbf{H}_0(\mathbf{curl};\Omega)\cap \mathbf{H}(\mathrm{div}^0;\Omega)$ such that $\boldsymbol{\xi}\neq \mathbf{0}$ and 
\begin{align}\label{Direct-variational-form}
(\nabla\times\boldsymbol{\xi}, \nabla\times\boldsymbol{\eta})=\lambda(\boldsymbol{\xi}, \boldsymbol{\eta}), 
\quad\forall\boldsymbol{\eta}\in \mathcal{H}_0.
\end{align}
\end{problem}
In (\ref{Direct-variational-form}), the Hilbert space $\mathcal{H}_0$ 
is the zero-divergence subspace of $X_N(\Omega)$, satisfying the spatial 
regularity results described in Lemma \ref{X_N-X_T-embedded-theorem}, 
and its norm and inner product can be chosen as the same as those 
for $\mathbf{H}(\mathbf{curl};\Omega)$.
Furthermore, due to the contractibility of the polyhedron $\Omega$, 
there exists a constant $\alpha>0$ depending on $\Omega$ such that the following Friedrichs 
inequality \cite[P. 627]{key2} holds on the space $\mathcal{H}_0$ 
\begin{align}
\label{eq:H_0-Friedrichs-inequality}
\|\nabla\times \boldsymbol{\eta}\|_{\mathbf{L}^{2}(\Omega)} \geq \alpha \|\boldsymbol{\eta}\|_{\mathbf{L}^{2}(\Omega)}, 
\quad\forall \boldsymbol{\eta}\in \mathcal{H}_0.
\end{align}
Inequality \eqref{eq:H_0-Friedrichs-inequality} implies that all eigenvalues of Problem \ref{primal-problem} are positive.
Moreover, the following Theorem \ref{maxwell-eigenvector-discrete-approximate} establishes the regularity of Maxwell eigenfunctions in $\mathcal{H}_0$.

\begin{thm}
\label{maxwell-eigenvector-discrete-approximate}
Let $\Omega$ be a bounded Lipschitz polyhedron, and let $\boldsymbol{\xi}\in \mathcal{H}_0$ be an eigenfunction of the Maxwell eigenvalue problem \eqref{maxwell-eigenvalue-problem} corresponding to a nonzero eigenvalue $\lambda\neq0$.
Then there is a constant $s>1/2$, which depends only on the geometric properties of $\Omega$, such that $\boldsymbol{\xi}\in \mathbf{H}_0^s(\mathbf{curl};\Omega)$.
\end{thm}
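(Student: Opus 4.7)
The plan is a two-step bootstrap that uses the geometric Sobolev embeddings of Lemma \ref{X_N-X_T-embedded-theorem} first for $\boldsymbol{\xi}$ itself and then for its curl, in order to land in $\mathbf{H}_0^s(\mathbf{curl};\Omega)$. By assumption $\boldsymbol{\xi}\in\mathcal{H}_0=\mathbf{H}_0(\mathbf{curl};\Omega)\cap\mathbf{H}(\mathrm{div}^0;\Omega)$, so in particular $\boldsymbol{\xi}\in X_N(\Omega)$. Lemma \ref{X_N-X_T-embedded-theorem} then supplies an exponent $s>1/2$, depending only on the solid angles at the vertices and the apertures along the edges of $\Omega$, for which $X_N(\Omega)\hookrightarrow\mathbf{H}^s(\Omega)$; this already gives $\boldsymbol{\xi}\in\mathbf{H}^s(\Omega)$ together with a bound in terms of its $X_N$-norm.

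Next I set $\mathbf{w}:=\nabla\times\boldsymbol{\xi}$ and aim to show $\mathbf{w}\in X_T(\Omega)$. That $\mathbf{w}\in\mathbf{L}^2(\Omega)$ is immediate, and the eigenvalue equation \eqref{maxwell-eigenvalue-problem} gives $\nabla\times\mathbf{w}=\lambda\boldsymbol{\xi}\in\mathbf{L}^2(\Omega)$, so $\mathbf{w}\in\mathbf{H}(\mathbf{curl};\Omega)$. The distributional divergence of any curl vanishes, hence $\nabla\cdot\mathbf{w}=0$. The one step that requires a short verification is that the normal trace $\mathbf{w}\cdot\mathbf{n}$ vanishes in $\mathrm{H}^{-1/2}(\partial\Omega)$. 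I would check this by testing $(\mathbf{w},\nabla q)$ against an arbitrary $q\in\mathrm{H}^1(\Omega)$: using the $\mathbf{H}(\mathrm{div})$-integration by parts with $\nabla\cdot\mathbf{w}=0$ reduces $(\mathbf{w},\nabla q)$ to the boundary pairing $\langle\mathbf{w}\cdot\mathbf{n},q\rangle_{\partial\Omega}$, while the $\mathbf{H}(\mathbf{curl})$-integration by parts rewrites the same quantity as $(\boldsymbol{\xi},\nabla\times\nabla q)+\langle\boldsymbol{\xi}\times\mathbf{n},\nabla q\rangle_{\partial\Omega}$, and both terms vanish because $\nabla\times\nabla q=0$ and $\boldsymbol{\xi}\in\mathbf{H}_0(\mathbf{curl};\Omega)$. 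Equating the two expressions identifies $\mathbf{w}\cdot\mathbf{n}=0$ on $\partial\Omega$, so $\mathbf{w}\in X_T(\Omega)$.

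A second application of Lemma \ref{X_N-X_T-embedded-theorem} then yields an exponent $s'>1/2$, again depending only on the geometry of $\Omega$, for which $X_T(\Omega)\hookrightarrow\mathbf{H}^{s'}(\Omega)$, so $\nabla\times\boldsymbol{\xi}=\mathbf{w}\in\mathbf{H}^{s'}(\Omega)$. Taking the common exponent $\min\{s,s'\}>1/2$, renamed $s$, one has both $\boldsymbol{\xi}\in\mathbf{H}^s(\Omega)$ and $\nabla\times\boldsymbol{\xi}\in\mathbf{H}^s(\Omega)$, which, combined with the already available tangential boundary condition $\boldsymbol{\xi}\times\mathbf{n}=0$, is precisely the statement $\boldsymbol{\xi}\in\mathbf{H}_0^s(\mathbf{curl};\Omega)$. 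I expect the only mildly delicate point to be the normal-trace identification above; once that is in place, the proof is a symmetric bootstrap exploiting that both $X_N(\Omega)$ and $X_T(\Omega)$ enjoy the same type of Sobolev embedding on a Lipschitz polyhedron.
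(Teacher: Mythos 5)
Your proposal is correct and follows essentially the same route as the paper: place $\boldsymbol{\xi}$ in $X_N(\Omega)$ and $\nabla\times\boldsymbol{\xi}$ in $X_T(\Omega)$, then invoke the embedding of Lemma \ref{X_N-X_T-embedded-theorem} for both. The only cosmetic difference is that you verify the vanishing normal trace of $\nabla\times\boldsymbol{\xi}$ directly by integration by parts, whereas the paper simply cites the de Rham complex \eqref{eq:de_rham_sequence_zero_bc} for the inclusion $\mathbf{curl}\,\mathbf{H}_0(\mathbf{curl};\Omega)\subset\mathbf{H}_0(\mathrm{div};\Omega)$.
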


\begin{proof}
For any given eigenfunction $\boldsymbol{\xi}\in \mathcal{H}_0=\mathbf{H}_0(\mathbf{curl};\Omega)\cap \mathbf{H}(\mathrm{div}^0;\Omega)\subset X_N(\Omega)$, $\nabla\times\boldsymbol{\xi}$ satisfies $\nabla\times(\nabla\times\boldsymbol{\xi})=\lambda\boldsymbol{\xi}$, which implies $\nabla\times\boldsymbol{\xi}\in \mathbf{H}(\mathbf{curl};\Omega)\cap \mathbf{H}_0(\mathrm{div}^0;\Omega)\subset X_T(\Omega)$ from the de Rham complex \eqref{eq:de_rham_sequence_zero_bc}.
By Lemma \ref{X_N-X_T-embedded-theorem},
there exists $s>1/2$ such that both spaces $X_N(\Omega)$ and $X_T(\Omega)$ are continuously embedded in $\mathbf{H}^s(\Omega)$. Consequently, $\boldsymbol{\xi}\in \mathbf{H}^s(\Omega)$ and $\nabla\times\boldsymbol{\xi}\in \mathbf{H}^s(\Omega)$, hence $\boldsymbol{\xi}\in \mathbf{H}_0^s(\mathbf{curl};\Omega)$.
\end{proof}

From a computational point of view, 
it is challenging to enforce the divergence free constraint.
Therefore, by utilizing the regular decomposition \eqref{eq:H_0curl-order-s-decomposion} for the case $s=0$ 
\begin{align}
\label{eq:H_0curl-order-0-decomposion}
\mathbf{H}_0(\mathbf{curl};\Omega)=\mathbf{H}^1(\Omega)
\cap \mathbf{H}_0(\mathbf{curl};\Omega)+\nabla (\mathrm{H}_0^1(\Omega)),
\end{align}
we can derive the following mixed form for 
\eqref{Direct-variational-form}.
\begin{problem}\label{mixed-problem}
Find $\lambda\in \mathbb{R}$ and $(\mathbf{u},p)\in\mathbf{H}^1(\Omega)\cap \mathbf{H}_0(\mathbf{curl};\Omega)\times \mathrm{H}_0^1(\Omega)$ such that 
$\mathbf{u}+\nabla p\neq\mathbf{0}$ and 
\begin{eqnarray}\label{mix-variational-form}
\left\{
\begin{array}{rll}
(\nabla\times \mathbf{u}, \nabla\times \mathbf{v})&=\lambda(\mathbf{u}+\nabla p, \mathbf{v}), 
&\quad\forall \mathbf{v}\in \mathbf{H}^1(\Omega)\cap \mathbf{H}_0(\mathbf{curl};\Omega), \\
(\mathbf{u}+\nabla p, \nabla q)&=0, &\quad\forall q\in \mathrm{H}_0^1(\Omega).
\end{array}
\right.
\end{eqnarray}
\end{problem}
An important issue is to ensure that the spectra of Problems \ref{primal-problem} 
and \ref{mixed-problem} are identical. Actually, they are equivalent in the following sense.
\begin{thm}\label{thm:problem1-problem2-equivalence}
If $(\lambda,\boldsymbol{\xi})$ is an eigenpair of Problem \ref{primal-problem} with $\lambda>0$, 
then there exist $(\mathbf{u},p)\in\mathbf{H}^1(\Omega)\cap \mathbf{H}_0(\mathbf{curl};\Omega)\times \mathrm{H}_0^1(\Omega)$ such that $\boldsymbol{\xi}=\mathbf{u}+\nabla p$, and $(\lambda,\mathbf{u},p)$ 
is an eigenpair  of Problem \ref{mixed-problem}. 
Conversely, if $(\lambda,\mathbf{u},p)$ is an eigenpair of Problem \ref{mixed-problem}, 
then $(\lambda,\mathbf{u}+\nabla p)$ is also an eigenpair of Problem \ref{primal-problem}.
\end{thm}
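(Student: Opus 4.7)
The plan is to prove both directions by exploiting the regular decomposition \eqref{eq:H_0curl-order-0-decomposion} to move between the divergence-free test space $\mathcal{H}_0$ of Problem \ref{primal-problem} and the larger test space $\mathbf{H}^1(\Omega)\cap \mathbf{H}_0(\mathbf{curl};\Omega)$ of Problem \ref{mixed-problem}. The key observations are that (i) gradients of $\mathrm{H}_0^1(\Omega)$ functions lie in $\mathbf{H}_0(\mathbf{curl};\Omega)$ with vanishing curl, and (ii) the second equation of Problem \ref{mixed-problem} is exactly the weak zero-divergence condition for $\mathbf{u}+\nabla p$.

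For the forward direction, I would take an eigenpair $(\lambda,\boldsymbol{\xi})$ of Problem \ref{primal-problem} and apply the operators $\mathsf{P}_0$, $\mathsf{Q}_0$ from Lemma \ref{thm:Lipschitz-contractible-order_1-decomposion} to set $\mathbf{u}=\mathsf{P}_0\boldsymbol{\xi}$, $p=\mathsf{Q}_0\boldsymbol{\xi}$, so $\boldsymbol{\xi}=\mathbf{u}+\nabla p$ with $\mathbf{u}\in \mathbf{H}^1(\Omega)\cap \mathbf{H}_0(\mathbf{curl};\Omega)$ and $p\in \mathrm{H}_0^1(\Omega)$. The second equation of Problem \ref{mixed-problem} follows immediately from $\boldsymbol{\xi}\in\mathbf{H}(\mathrm{div}^0;\Omega)$ by integration by parts, since $q\in\mathrm{H}_0^1(\Omega)$. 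For the first equation I would note that $\nabla\times\mathbf{u}=\nabla\times\boldsymbol{\xi}$ (the gradient part has zero curl), and for any test $\mathbf{v}\in\mathbf{H}^1(\Omega)\cap\mathbf{H}_0(\mathbf{curl};\Omega)$ apply the decomposition again to write $\mathbf{v}=\mathbf{v}_0+\nabla\psi$ with $\mathbf{v}_0\in\mathcal{H}_0$ and $\psi\in\mathrm{H}_0^1(\Omega)$; then plugging $\mathbf{v}_0$ into \eqref{Direct-variational-form} gives the curl pairing, while the divergence-free property of $\boldsymbol{\xi}$ kills the $\nabla\psi$ contribution on the right-hand side.

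For the reverse direction, given an eigenpair $(\lambda,\mathbf{u},p)$ of Problem \ref{mixed-problem}, set $\boldsymbol{\xi}=\mathbf{u}+\nabla p$. Membership in $\mathbf{H}_0(\mathbf{curl};\Omega)$ is automatic, and the second equation of Problem \ref{mixed-problem} shows that $\nabla\cdot\boldsymbol{\xi}=0$ distributionally, placing $\boldsymbol{\xi}$ in $\mathcal{H}_0$. To verify \eqref{Direct-variational-form}, decompose an arbitrary $\boldsymbol{\eta}\in\mathcal{H}_0\subset \mathbf{H}_0(\mathbf{curl};\Omega)$ via \eqref{eq:H_0curl-order-0-decomposion} as $\boldsymbol{\eta}=\boldsymbol{\eta}_1+\nabla r$ with $\boldsymbol{\eta}_1\in \mathbf{H}^1(\Omega)\cap\mathbf{H}_0(\mathbf{curl};\Omega)$ and $r\in \mathrm{H}_0^1(\Omega)$; then $(\nabla\times\boldsymbol{\xi},\nabla\times\boldsymbol{\eta})=(\nabla\times\mathbf{u},\nabla\times\boldsymbol{\eta}_1)=\lambda(\mathbf{u}+\nabla p,\boldsymbol{\eta}_1)$ by the first mixed equation, while the second mixed equation allows one to add back $\lambda(\mathbf{u}+\nabla p,\nabla r)=0$ to recover $\lambda(\boldsymbol{\xi},\boldsymbol{\eta})$. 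Nonvanishing carries over in both directions by the normalization hypotheses.

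The only mild obstacle is bookkeeping the symmetric roles played by the regular decomposition: it is used once to produce the solution in the target formulation and a second time on the test functions to translate the test spaces. The assumption $\lambda>0$ in the forward direction, together with Friedrichs' inequality \eqref{eq:H_0-Friedrichs-inequality}, is not strictly needed for the algebraic equivalence but is consistent with the spectrum of Problem \ref{primal-problem}; no other analytic subtleties arise, and the argument is purely structural, relying on the exactness embodied in \eqref{eq:H_0curl-order-0-decomposion} together with the $\mathbf{L}^2$-orthogonality between divergence-free fields in $\mathcal{H}_0$ and gradients of $\mathrm{H}_0^1(\Omega)$ functions.
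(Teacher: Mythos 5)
Your proposal is correct and follows essentially the same route as the paper: construct $(\mathbf{u},p)$ via the regular decomposition \eqref{eq:H_0curl-order-0-decomposion}, and pass between the two test spaces by splitting off a gradient that is killed by the weak divergence-free condition. One small caveat: the split of a test function $\mathbf{v}\in\mathbf{H}^1(\Omega)\cap\mathbf{H}_0(\mathbf{curl};\Omega)$ into $\mathbf{v}_0+\nabla\psi$ with $\mathbf{v}_0\in\mathcal{H}_0$ is not the regular decomposition \eqref{eq:H_0curl-order-0-decomposion} (which only puts $\mathbf{v}_0$ in $\mathbf{H}^1(\Omega)\cap\mathbf{H}_0(\mathbf{curl};\Omega)$) but the orthogonal Helmholtz correction obtained by solving $-\Delta\psi=\nabla\cdot\mathbf{v}$ in $\mathrm{H}_0^1(\Omega)$, which is exactly how the paper makes this step precise.
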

\begin{proof}
First, we assume $(\lambda, \boldsymbol{\xi})$ is an eigenpair of Problem \ref{primal-problem}. 
From \eqref{eq:H_0curl-order-s-decomposion}, 
there exists $(\mathbf{u},p)\in\mathbf{H}^1(\Omega)\cap \mathbf{H}_0(\mathbf{curl};\Omega)\times \mathrm{H}_0^1(\Omega)$
such that $\boldsymbol\xi=\mathbf u+\nabla p\in \mathbf{H}(\mathrm{div}^0;\Omega)$. 
Besides, for any $\mathbf{v}\in \mathbf{H}^1(\Omega)\cap \mathbf{H}_0(\mathbf{curl};\Omega)$, 
there is a unique function $g\in \mathrm{H}_0^1(\Omega)$ such that $-\Delta g=\nabla\cdot\mathbf{v}$.
Then $\mathbf{v}+\nabla g\in \mathcal{H}_0 =  \mathbf{H}_0(\mathbf{curl};\Omega)\cap \mathbf{H}(\mathrm{div}^0;\Omega)$.
Taking $\boldsymbol{\eta} = \mathbf{v} + \nabla g$ in \eqref{Direct-variational-form}, 
and combining with $(\mathbf{u} + \nabla p, \nabla g) = 0$, 
the following equality holds 
\begin{align}\label{Equality_2}
(\nabla\times \mathbf{u}, \nabla\times \mathbf{v})
=(\nabla\times \boldsymbol\xi, \nabla\times \boldsymbol\eta)
=\lambda(\mathbf{u}+\nabla p, \mathbf{v}), 
\end{align}
which implies 
that $(\lambda,\mathbf{u},p)$ is an eigenpair for Problem \ref{mixed-problem}.

Conversely, for any $\boldsymbol{\eta} \in \mathcal{H}_0$, we may express $\boldsymbol{\eta} = \mathbf{v} + \nabla q$ with  $(\mathbf{v}, q) \in \mathbf{H}^1(\Omega)\cap \mathbf{H}_0(\mathbf{curl};\Omega) \times \mathrm{H}_0^1(\Omega)$.
From the second equation in \eqref{mix-variational-form}, it follows that $\mathbf{u} + \nabla p \in \mathcal{H}_0$. Adding the two equations in \eqref{mix-variational-form} yields
\begin{align}
(\nabla \times (\mathbf{u} + \nabla p), \nabla \times \boldsymbol{\eta}) = (\nabla \times \mathbf{u}, \nabla \times \mathbf{v}) = \lambda (\mathbf{u} + \nabla p, \boldsymbol{\eta}),\ \ \forall \boldsymbol{\eta}\in \mathcal{H}_0.
\end{align}
This shows that $(\lambda, \mathbf{u} + \nabla p)$ is an eigenpair of Problem \ref{primal-problem}.
\end{proof}



We now proceed to design the discretization 
for Problem \ref{mixed-problem} by using continuous finite elements. 
For this aim, a regular family meshes $\{\mathcal{T}_h\}$ is built for 
the closure of $\Omega$. In this paper, we are concerned with the tetrahedral meshes. 
As usual, $h := \max_{K\in \mathcal{T}_h}h_K$, where $h_K$ denotes the 
diameter for any tetrahedral $K\in \mathcal{T}_h$. 
We denote by $\mathbb{P}_{r}(K)$ the space of polynomials of degree at most $r$ on $K$ 
and by $\tilde{\mathbb{P}}_{r}(K)$ the subspace of homogeneous polynomials of degree $r$.

In the following analysis for the discretizations and their properties, 
the finite element spaces involved in the numerical schemes include
the Lagrange finite element space of order $r$ 
\begin{align}
\mathscr{L}_{h, 0}^r:=\Big\{q_h\in C(\bar{\Omega}):q_h|_K\in\mathbb{P}_{r}(K), ~\forall K \in \mathcal{T}_h\mathrm{~and~}q_h=0\mathrm{~on~}\partial\Omega\Big\}\subset \mathrm{H}_0^1(\Omega),
\end{align}
and the vector Lagrange finite element space of order $r$ 
\begin{align}
\mathscr{V}_{h, \tau0}^r:=\Big\{\mathbf{v}_h\in [C(\bar{\Omega})]^3\cap \mathbf{H}_0(\mathbf{curl};\Omega):\mathbf{v}_h|_K\in[\mathbb{P}_{r}(K)]^3, 
~\forall K \in \mathcal{T}_h\Big\}.
\end{align}
In the convergence analysis, the first kind N\'{e}d\'{e}lec space of order $r$ \cite{key1} 
\begin{align}
\mathscr{N}_{h, 0}^r:=\Big\{\mathbf{v}_h\in \mathbf{H}_0(\mathbf{curl};\Omega):\mathbf{v}_h|_K\in[\mathbb{P}_{r-1}(K)]^3 \oplus \mathbf{x}\times [\tilde{\mathbb{P}}_{r-1}(K)]^3, ~\forall K \in \mathcal{T}_h\Big\},
\end{align}
is also required.
Furthermore, on a bounded contractible Lipschitz domain, the following so-called discrete exact sequence property has been proved in \cite{key2}.

\begin{lem}(\cite[P.116]{key2})\label{discrete-exactline}
The gradients of functions in $\mathscr{L}_{h, 0}^r$ belong to the space $\mathscr{N}_{h, 0}^r$. 
Furthermore, when $\Omega$ is a bounded contractible Lipschitz domain in $\mathbb{R}^3$, 
the image space $\nabla \mathscr{L}_{h, 0}^r$ under the gradient operator is exactly the null 
space of the curl operator in $\mathscr{N}_{h, 0}^r$.
\end{lem}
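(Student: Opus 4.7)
The plan is to split the lemma into its two assertions and verify each using elementary polynomial arguments together with the global exact sequence already established in Lemma \ref{thm:exact_de_rham_zero_bc}.

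For the first assertion, I would directly check the defining properties of $\mathscr{N}_{h,0}^r$ for $\nabla q_h$ with $q_h\in \mathscr{L}_{h,0}^r$. On each tetrahedron $K$, $q_h|_K\in \mathbb{P}_r(K)$ gives $\nabla q_h|_K\in [\mathbb{P}_{r-1}(K)]^3\subset [\mathbb{P}_{r-1}(K)]^3\oplus \mathbf{x}\times [\tilde{\mathbb{P}}_{r-1}(K)]^3$. Global membership in $\mathbf{H}_0(\mathbf{curl};\Omega)$ is automatic, since $q_h\in \mathrm{H}_0^1(\Omega)$ forces the tangential trace of $\nabla q_h$ on each boundary face to coincide with the surface gradient of $q_h|_{\partial\Omega}=0$, which vanishes.

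For the second assertion, the inclusion $\nabla \mathscr{L}_{h,0}^r\subset \ker(\mathbf{curl})\cap \mathscr{N}_{h,0}^r$ is immediate from $\mathbf{curl}\circ\mathbf{grad}=0$. For the converse, I would take $\mathbf{v}_h\in \mathscr{N}_{h,0}^r$ with $\nabla\times \mathbf{v}_h=0$, so that $\mathbf{v}_h\in \mathbf{H}_0(\mathbf{curl}^0;\Omega)$, and invoke Lemma \ref{thm:exact_de_rham_zero_bc} on the contractible polyhedron $\Omega$ to produce a global potential $q\in \mathrm{H}_0^1(\Omega)$ with $\mathbf{v}_h=\nabla q$. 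Continuity of $q$ across inter-element faces and the zero Dirichlet trace both come for free from $q\in \mathrm{H}_0^1(\Omega)$, so what is left is to show $q|_K\in \mathbb{P}_r(K)$ for every $K$. For this I would use the local polynomial de Rham exactness of the first-kind N\'ed\'elec complex: on each tetrahedron $K$, every curl-free element of $[\mathbb{P}_{r-1}(K)]^3\oplus \mathbf{x}\times [\tilde{\mathbb{P}}_{r-1}(K)]^3$ is the gradient of some $\tilde{q}_K\in \mathbb{P}_r(K)$. Comparing $\nabla(q|_K)=\mathbf{v}_h|_K=\nabla \tilde{q}_K$ on the connected set $K$ forces $q|_K-\tilde{q}_K$ to be constant, hence $q|_K\in \mathbb{P}_r(K)$ and $q\in \mathscr{L}_{h,0}^r$.

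The main obstacle I anticipate is justifying the local polynomial exactness step. A clean route is a Poincar\'e-type explicit homotopy on the star-shaped tetrahedron $K$: after translating an interior base point to the origin, set $\tilde{q}_K(\mathbf{x}):=\int_0^1 \mathbf{v}_h(t\mathbf{x})\cdot \mathbf{x}\, dt$, which satisfies $\nabla \tilde{q}_K=\mathbf{v}_h|_K$ whenever $\nabla\times \mathbf{v}_h|_K=0$. A direct monomial count then shows $\tilde{q}_K\in \mathbb{P}_r(K)$: the $[\mathbb{P}_{r-1}(K)]^3$ part of $\mathbf{v}_h$ contributes a polynomial of degree at most $r$, while the $\mathbf{x}\times [\tilde{\mathbb{P}}_{r-1}(K)]^3$ part contributes nothing because $(t\mathbf{x})\times \mathbf{p}(t\mathbf{x})$ is orthogonal to $\mathbf{x}$. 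Once this local fact is secured, the remaining pieces fall out of the global exact sequence together with the standard observation that a piecewise polynomial in $H^1$ is automatically continuous.
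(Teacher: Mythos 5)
The paper does not prove this lemma; it cites it directly from \cite[P.116]{key2}, and your argument is a correct, self-contained reconstruction of the standard proof found there: local degree counting plus the trace identity for the inclusion $\nabla\mathscr{L}_{h,0}^r\subset\mathscr{N}_{h,0}^r$, and for the converse the global exact sequence of Lemma \ref{thm:exact_de_rham_zero_bc} combined with the local polynomial Poincar\'e homotopy, where the key observation that $\mathbf{x}\times\tilde{\mathbb{P}}_{r-1}$ contributes nothing to $\int_0^1\mathbf{v}_h(t\mathbf{x})\cdot\mathbf{x}\,dt$ is exactly right. The only cosmetic wrinkle is the translation of the base point: the factor $\mathbf{x}$ in the Nédélec local space refers to the untranslated position vector, so either note that the space is translation-invariant (lower-order terms of the shifted homogeneous part are absorbed into $[\mathbb{P}_{r-1}]^3$) or, more simply, run the homotopy from the global origin after extending the curl-free polynomial $\mathbf{v}_h|_K$ to all of $\mathbb{R}^3$.
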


These finite element spaces have different types of interpolation operators.
Considering
$\mathring{\mathrm{H}}^s(\Omega):=\{q\in\mathrm{H}^s(\Omega):q=0\mathrm{~on~}\partial\Omega\}$, 
the following Scott-Zhang interpolant \cite{key13}
\begin{align}
\Pi_h^r: \mathring{\mathrm{H}}^s(\Omega)\mapsto \mathscr{L}_{h, 0}^r.
\end{align}
is adopted for the continuous finite element space $\mathscr{L}_{h, 0}^r$. 
Correspondingly, the vector Scott-Zhang interpolant
\begin{align}
\boldsymbol\Pi_h^r:\mathbf{H}^s(\Omega)\cap \mathbf{H}_0(\mathbf{curl};\Omega)
\mapsto \mathscr{V}_{h, \tau0}^r,
\end{align}
can be used for the vector finite element space $\mathscr{V}_{h, \tau0}^r$. 
The operators $\Pi_h^r$ and $\boldsymbol\Pi_h^r$ are well-defined 
provided $s>1/2$.
In such a case, for any $0\leq m\leq s\leq r+1$, the following interpolation error estimates hold
\cite[Theorem 4.1]{key13}
\begin{align}
\sum_{K \in \mathcal{T}_h}h_K^{2(m-s)}\|q - \Pi_h^r q\|_{\mathrm{H}^m(K)}^2 
&\leq C \|q\|_{\mathrm{H}^s(\Omega)}^2, \label{eq:scalar-Scott-Zhang-estimate} \\
\sum_{K \in \mathcal{T}_h}h_K^{2(m-s)}\|\mathbf{v} - \boldsymbol\Pi_h^r \mathbf{v}\|_{\mathbf{H}^m(K)}^2 
&\leq C \|\mathbf{v}\|_{\mathbf{H}^s(\Omega)}^2. \label{eq:vector-Scott-Zhang-estimate}
\end{align}

In the analysis presented in Section \ref{Section_Operator}, a type of  
modified interpolation operators established in \cite{key44} will be employed. 
This type of interpolations are smoothed projections constructed by combining the canonical interpolation defined from the degrees of freedom with a smoothing operator.
The advantage of these interpolation operators is that they are not only bounded, 
but also satisfy the commutative diagram property. 
For more information, please refer to \cite{key44}.
\begin{lem}(\cite[Corollary 6.3]{key44})\label{lem:Commutative-Diagram}
Let $\Omega$ be a bounded polyhedron, then there exist linear smoothed projections
\begin{align}
\Pi_{\mathbf{grad}, h}^{r}: \mathrm{L}^2(\Omega)\mapsto \mathscr{L}_{h, 0}^r, \ \ \ 
\boldsymbol\Pi_{\mathbf{curl}, h}^{r}:
\mathbf{L}^2(\Omega)\mapsto \mathscr{N}_{h, 0}^r,
\end{align}
such that $\Pi_{\mathbf{grad}, h}^{r}$ is a uniformly bounded operator in $\mathcal{L}(\mathrm{L}^2(\Omega))$ and $\mathcal{L}(\mathrm{H}_0^1(\Omega))$, 
and $\boldsymbol\Pi_{\mathbf{curl}, h}^{r}$is a uniformly bounded operator in $\mathcal{L}(\mathbf{L}^2(\Omega))$ and $\mathcal{L}(\mathbf{H}_0(\mathbf{curl};\Omega))$.
These two projections satisfy the following commutative diagram property
\begin{align}
\nabla(\Pi_{\mathbf{grad}, h}^{r}q)=\boldsymbol\Pi_{\mathbf{curl}, h}^{r}(\nabla q), 
\quad\forall q\in\mathrm{H}_0^1(\Omega).
\end{align}
Moreover, for all $q\in \mathrm{L}^2(\Omega)$ and $\mathbf{v}\in \mathbf{L}^2(\Omega)$, 
it holds that $\Pi_{\mathbf{grad}, h}^{r}q\to q$ in $\mathrm{L}^2(\Omega)$ 
and $\boldsymbol\Pi_{\mathbf{curl}, h}^{r}\mathbf{v}\to \mathbf{v}$ in $\mathbf{L}^2(\Omega)$ as
$h\to 0$, respectively.
\end{lem}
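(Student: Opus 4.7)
The plan is to follow the smoothed projection construction in the spirit of Arnold--Falk--Winther, combining canonical degree-of-freedom based interpolants (which respect the commutative diagram but require additional regularity) with a mollification that extends the definition to $\mathrm{L}^2$, and then applying a Falk--Osborn style correction to restore idempotency. The skeleton is: canonical interpolant $\Rightarrow$ smoothed interpolant (gains $\mathrm{L}^2$ definition, loses projection property) $\Rightarrow$ corrected smoothed projection (regains projection property, keeps commutativity).

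First, I would introduce canonical interpolants $I_{\mathbf{grad},h}^r$ and $\boldsymbol{I}_{\mathbf{curl},h}^r$ defined on sufficiently smooth functions via the standard Lagrange and N\'ed\'elec degrees of freedom. The commutative diagram $\nabla(I_{\mathbf{grad},h}^r q)=\boldsymbol{I}_{\mathbf{curl},h}^r(\nabla q)$ holds by construction, because the N\'ed\'elec edge moments of $\nabla q$ reduce, via Stokes' theorem, to differences of the vertex/edge values used to define the Lagrange interpolant. Next, I would build a smoothing operator $R_h$ by convolution with a compactly supported mollifier of width $\mathcal{O}(h)$, combined with a local pullback near $\partial\Omega$ that shifts the support strictly into $\Omega$, so that homogeneous boundary conditions are preserved and $R_h$ commutes with $\nabla$. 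Composing gives the extended operators
\begin{align*}
\widetilde{\Pi}_{\mathbf{grad},h}^r := I_{\mathbf{grad},h}^r\circ R_h,\qquad
\widetilde{\boldsymbol{\Pi}}_{\mathbf{curl},h}^r := \boldsymbol{I}_{\mathbf{curl},h}^r\circ R_h,
\end{align*}
which are defined on $\mathrm{L}^2(\Omega)$, uniformly bounded on $\mathrm{L}^2$ and on the energy spaces, and satisfy the commutative diagram thanks to commutativity of the two ingredients.

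The operator $\widetilde{\Pi}_{\mathbf{grad},h}^r$ is not yet a projection, but its restriction to $\mathscr{L}_{h,0}^r$ is close to the identity: standard estimates give $\|\widetilde{\Pi}_{\mathbf{grad},h}^r q_h - q_h\|_{\mathrm{L}^2}\lesssim h\|q_h\|_{\mathrm{L}^2}$ on the finite element space, and analogously in the curl setting. Hence for $h$ small enough the restricted operators are invertible on $\mathscr{L}_{h,0}^r$ and $\mathscr{N}_{h,0}^r$ respectively, and I would define
\begin{align*}
\Pi_{\mathbf{grad},h}^r := \bigl(\widetilde{\Pi}_{\mathbf{grad},h}^r|_{\mathscr{L}_{h,0}^r}\bigr)^{-1}\circ \widetilde{\Pi}_{\mathbf{grad},h}^r,\qquad
\boldsymbol{\Pi}_{\mathbf{curl},h}^r := \bigl(\widetilde{\boldsymbol{\Pi}}_{\mathbf{curl},h}^r|_{\mathscr{N}_{h,0}^r}\bigr)^{-1}\circ \widetilde{\boldsymbol{\Pi}}_{\mathbf{curl},h}^r.
\end{align*}
Idempotency and uniform boundedness in $\mathrm{L}^2$ and in the energy norms then follow from the uniform invertibility (a Neumann series argument) and from the boundedness of $\widetilde{\Pi}$ and $\widetilde{\boldsymbol{\Pi}}$. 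The commutative diagram lifts to the corrected operators because Lemma \ref{discrete-exactline} guarantees $\nabla \mathscr{L}_{h,0}^r\subset\mathscr{N}_{h,0}^r$, so the two discrete inverses are compatible with $\nabla$.

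Finally, the pointwise convergence $\Pi_{\mathbf{grad},h}^r q\to q$ in $\mathrm{L}^2(\Omega)$ and $\boldsymbol{\Pi}_{\mathbf{curl},h}^r\mathbf{v}\to\mathbf{v}$ in $\mathbf{L}^2(\Omega)$ is obtained by writing $\Pi q - q = (\Pi - \widetilde{\Pi})q + (\widetilde{\Pi}-\mathrm{Id})q$, using the approximation property of the canonical interpolant on smooth test elements, the density of smooth functions in $\mathrm{L}^2$, and a uniform $\mathrm{L}^2$-bound for $\Pi$. The step I expect to be the main obstacle is the construction of the boundary-respecting smoother $R_h$: near non-smooth features of $\partial\Omega$ (reentrant edges and vertices of the polyhedron) one must shift mollification supports inward without destroying the commutation with $\nabla$ or ruining uniform $\mathrm{L}^2$-boundedness of the composition with the canonical interpolant, and verifying that the resulting correction step $(\widetilde{\Pi}|_{\mathrm{FE}})^{-1}$ is uniformly well-conditioned with respect to $h$ requires a careful shape-regularity argument.
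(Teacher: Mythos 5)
The paper offers no proof of this lemma: it is quoted directly from \cite[Corollary 6.3]{key44}, so the ``paper's own proof'' is the citation itself. Your sketch correctly reconstructs the standard smoothed-projection construction underlying that reference --- canonical interpolants with the commuting-diagram property, composed with a boundary-respecting mollifier, followed by inversion of the restriction to the finite element spaces to restore idempotency --- and the delicate points you flag (the inward shift of mollification supports near non-smooth boundary features and the uniform invertibility of the restricted operator) are exactly the technical content handled in that reference, so your proposal is consistent with the source the paper relies on.
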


Let the finite element spaces $\mathbf{X}_{h, \tau0}$ and $Y_{h, 0}$ 
denote subspaces of $\mathbf{H}^1(\Omega)\cap \mathbf{H}_0(\mathbf{curl};\Omega)$ 
and $\mathrm{H}_0^1(\Omega)$, respectively. 
Then the discrete variational formulation for Problem \ref{mixed-problem} is given as follows. 
\begin{problem}\label{discrete-mixed-problem}
Find $\lambda_h\in \mathbb{R}$ and $(\mathbf{u}_h,p_h)\in\mathbf{X}_{h, \tau0}\times Y_{h, 0}$ 
such that $\mathbf{u}_h+\nabla p_h\neq\mathbf{0}$ and 
\begin{eqnarray}\label{mix-discrete-variational-form}
\left\{
\begin{array}{rll}
(\nabla\times \mathbf{u}_h, \nabla\times \mathbf{v}_h)&=\lambda_h(\mathbf{u}_h+\nabla p_h, \mathbf{v}_h), 
&\quad\forall \mathbf{v}_h\in \mathbf{X}_{h, \tau0}, \\
(\mathbf{u}_h+\nabla p_h, \nabla q_h)&=0, &\quad\forall q_h\in Y_{h, 0}.
\end{array}
\right.\end{eqnarray}
\end{problem}
As shown in \eqref{mix-discrete-variational-form}, we are actually concerned with 
the so-called eigenfunction $\mathbf{0}\neq\boldsymbol{\xi}_h=\mathbf{u}_h+\nabla p_h$ 
corresponding to the eigenvalue $\lambda_h$.
Clearly, $\boldsymbol{\xi}_h$ belongs to the following space 
$\mathcal{H}_0^h\subset \mathbf{H}_0(\mathbf{curl};\Omega)$, which is naturally derived
from \eqref{mix-discrete-variational-form} and 
possesses a divergence-free property in the discrete sense relative to $\nabla Y_{h, 0}$
\begin{align}\label{def:discrete-div0-space}
\mathcal{H}_0^h:=\Big\{\mathbf{v}_h+\nabla q_h\in \mathbf{X}_{h, \tau0}+\nabla Y_{h, 0} : (\mathbf{v}_h 
+\nabla q_h, \nabla r_h)=0, ~\forall r_h\in Y_{h, 0}\Big\}.
\end{align}
In particular, in this paper, we choose $\mathbf{X}_{h, \tau0}=\mathscr{V}_{h, \tau0}^r$ and $Y_{h, 0}
=\mathscr{L}_{h, 0}^{r+1}$, where $r$ is any positive integer.
The key design of taking the polynomial degree of the scalar space one order higher than that of the vector space is a crucial feature of our discretization.
It ensures the discrete exact sequence property, which relies on the presence of rich basis functions in $Y_{h, 0}$ and plays an essential role in excluding spurious zero modes in discrete Problem \ref{discrete-mixed-problem}, as referenced in the following Theorem \ref{thm:mix-discrete-variational-form_nonzero-eigenvalue}.
It is also closely related to the derivation of discrete compactness for discrete space sequences, as detailed in Theorem \ref{thm:discrete-compactness-property}.

\begin{thm}\label{thm:mix-discrete-variational-form_nonzero-eigenvalue} 
All eigenvalues of Problem \ref{discrete-mixed-problem} are positive real numbers.
\end{thm}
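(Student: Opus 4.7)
The plan is a two-step argument that combines a standard Rayleigh-quotient computation with the discrete exact sequence property stated in Lemma \ref{discrete-exactline}. Since the bilinear forms on both sides of \eqref{mix-discrete-variational-form} are symmetric (and real), reality of $\lambda_h$ follows from the formulation itself, so the substance of the theorem lies in showing strict positivity.

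\textbf{Step 1 (nonnegativity via the mixed formulation).} For an eigenpair $(\lambda_h,\mathbf{u}_h,p_h)$, I first test the first equation of \eqref{mix-discrete-variational-form} with $\mathbf{v}_h=\mathbf{u}_h\in\mathbf{X}_{h,\tau 0}$ and the second equation with $q_h=p_h\in Y_{h,0}$, obtaining
\begin{align*}
\|\nabla\times\mathbf{u}_h\|_{\mathbf{L}^2(\Omega)}^2 = \lambda_h(\mathbf{u}_h+\nabla p_h,\mathbf{u}_h),\qquad (\mathbf{u}_h+\nabla p_h,\nabla p_h)=0.
\end{align*}
Multiplying the second identity by $\lambda_h$ and adding it to the first collapses the right-hand side into $\lambda_h\|\mathbf{u}_h+\nabla p_h\|_{\mathbf{L}^2(\Omega)}^2$, so
\begin{align*}
\lambda_h\,\|\mathbf{u}_h+\nabla p_h\|_{\mathbf{L}^2(\Omega)}^2 = \|\nabla\times\mathbf{u}_h\|_{\mathbf{L}^2(\Omega)}^2.
\end{align*}
Because $\mathbf{u}_h+\nabla p_h\neq\mathbf{0}$ by hypothesis, the denominator is strictly positive and hence $\lambda_h\geq 0$.

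\textbf{Step 2 (ruling out $\lambda_h=0$ via the discrete exact sequence).} Suppose for contradiction that $\lambda_h=0$. Then $\nabla\times\mathbf{u}_h=\mathbf{0}$. The key observation is that with the choice $\mathbf{X}_{h,\tau 0}=\mathscr{V}_{h,\tau 0}^r$ and $Y_{h,0}=\mathscr{L}_{h,0}^{r+1}$, we have the inclusion
\begin{align*}
\mathscr{V}_{h,\tau 0}^r \subset \mathscr{N}_{h,0}^{r+1},
\end{align*}
since elementwise $[\mathbb{P}_{r}(K)]^3\subset[\mathbb{P}_{r}(K)]^3\oplus\mathbf{x}\times[\tilde{\mathbb{P}}_{r}(K)]^3$, the global continuity of a Lagrange field certainly implies tangential continuity, and the zero tangential trace is preserved. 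Therefore $\mathbf{u}_h\in\mathscr{N}_{h,0}^{r+1}$ is curl-free, and Lemma \ref{discrete-exactline} applied to the pair $(\mathscr{L}_{h,0}^{r+1},\mathscr{N}_{h,0}^{r+1})$ yields $\phi\in \mathscr{L}_{h,0}^{r+1}=Y_{h,0}$ with $\mathbf{u}_h=\nabla\phi$. Hence $\mathbf{u}_h+\nabla p_h=\nabla(\phi+p_h)$ with $\phi+p_h\in Y_{h,0}$, and using $q_h=\phi+p_h$ in the second equation of \eqref{mix-discrete-variational-form} gives $\|\nabla(\phi+p_h)\|_{\mathbf{L}^2(\Omega)}^2=0$, i.e. $\mathbf{u}_h+\nabla p_h=\mathbf{0}$. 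This contradicts the assumption on the eigenfunction, so $\lambda_h>0$.

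\textbf{Expected main obstacle.} The only nontrivial point is recognizing that the discrete exact sequence of Lemma \ref{discrete-exactline} must be invoked \emph{not} at the N\'ed\'elec order matching $\mathbf{X}_{h,\tau 0}$, but one order higher, so that the Lagrange potential $\phi$ actually lies in the scalar test space $Y_{h,0}$. This is precisely the reason the scheme takes the scalar potential degree $r{+}1$ one higher than the vector potential degree $r$; any mismatch would leave $\phi$ outside $Y_{h,0}$ and the contradiction argument would collapse. Verifying the inclusion $\mathscr{V}_{h,\tau 0}^r\subset\mathscr{N}_{h,0}^{r+1}$ from the local polynomial decomposition is the one place the proof relies on a concrete property of the finite element spaces rather than abstract manipulation of the variational form.
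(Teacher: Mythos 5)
Your proposal is correct and follows essentially the same route as the paper: the exclusion of $\lambda_h=0$ via the inclusion $\mathscr{V}_{h,\tau 0}^r\subset\mathscr{N}_{h,0}^{r+1}$ and the discrete exact sequence of Lemma \ref{discrete-exactline}, producing a scalar potential in $Y_{h,0}=\mathscr{L}_{h,0}^{r+1}$ and the contradiction $\mathbf{u}_h+\nabla p_h=\mathbf{0}$, is exactly the paper's argument. Your Step 1 merely spells out the Rayleigh-quotient identity $\lambda_h\|\mathbf{u}_h+\nabla p_h\|_{\mathbf{L}^2(\Omega)}^2=\|\nabla\times\mathbf{u}_h\|_{\mathbf{L}^2(\Omega)}^2$ that the paper leaves implicit when asserting nonnegativity, which is a welcome but not substantively different addition.
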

\begin{proof}
It is evident that Problem \ref{primal-problem} does not admit any negative eigenvalues.
Suppose there exists $\mathbf{u}_h+\nabla p_h\in \mathbf{X}_{h, \tau0}+\nabla Y_{h, 0}$ and $\lambda_h=0$ 
satisfying \eqref{mix-discrete-variational-form}. Taking $\mathbf{v}_h=\mathbf{u}_h$
shows that $\nabla\times \mathbf{u}_h=\mathbf{0}$. Note that the discrete function 
$\mathbf{u}_h\in \mathbf{X}_{h, \tau0}=\mathscr{V}_{h, \tau0}^r\subset \mathscr{N}_{h, 0}^{r+1}$ is actually also within the first kind N\'{e}d\'{e}lec space. Therefore, according to the discrete exact sequence property 
in Lemma \ref{discrete-exactline},
there exists 
$r_h\in \mathscr{L}_{h, 0}^{r+1}=Y_{h, 0}$ such that $\mathbf{u}_h=\nabla r_h$.
Then, taking $q_h=r_h+p_h$ in \eqref{mix-discrete-variational-form} gives $|r_h+p_h|_{\mathrm{H}^1(\Omega)}=0$,
which leads to a contradiction that $\mathbf{u}_h+\nabla p_h=\nabla (r_h+p_h)=\mathbf{0}$.
Thus, Problem \ref{discrete-mixed-problem} has no zero eigenvalue.
\end{proof}

The following result shows that for every discrete function $\boldsymbol{\xi}_h\in \mathbf{X}_{h, \tau0}+\nabla Y_{h, 0}$, 
we can transform it into a function in the space $\mathcal{H}_0^h$ via gradient correction term.

\begin{thm}
\label{cor:X_h,t0-to-H_0^h}
For any $\boldsymbol{\xi}_h\in \mathbf{X}_{h, \tau0}+\nabla Y_{h, 0}$, 
there exists a unique $r_h\in Y_{h, 0}$ such that $\boldsymbol{\xi}_h+\nabla r_h\in \mathcal{H}_0^h$.
Furthermore, $r_h$ satisfies the following inequality
\begin{align}\label{eq:r_h-form-control}
\|\nabla r_h\|_{\mathbf{L}^2(\Omega)}
\leq\|\boldsymbol{\xi}-\boldsymbol{\xi}_h\|_{\mathbf{L}^2(\Omega)}, 
\quad\forall \boldsymbol{\xi}\in \mathbf{H}(\mathrm{div}^0;\Omega).
\end{align}
\end{thm}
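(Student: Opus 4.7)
The plan is to reformulate the condition $\boldsymbol{\xi}_h+\nabla r_h\in\mathcal{H}_0^h$ as a standard discrete Poisson problem in $Y_{h,0}$, establish well-posedness via Lax--Milgram, and then obtain the norm bound by exploiting the divergence-free property of the comparison function $\boldsymbol{\xi}$ together with the homogeneous boundary condition built into $Y_{h,0}$.

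First, unfolding the definition \eqref{def:discrete-div0-space}, the requirement $\boldsymbol{\xi}_h+\nabla r_h\in\mathcal{H}_0^h$ is equivalent to
\begin{align*}
(\nabla r_h,\nabla s_h) = -(\boldsymbol{\xi}_h,\nabla s_h),\quad\forall s_h\in Y_{h,0}.
\end{align*}
This is a finite-dimensional linear system on $Y_{h,0}=\mathscr{L}_{h,0}^{r+1}\subset\mathrm{H}_0^1(\Omega)$. The bilinear form $(\nabla\cdot,\nabla\cdot)$ is symmetric, continuous, and coercive on $\mathrm{H}_0^1(\Omega)$ by the Poincar\'e inequality, so Lax--Milgram applied in $Y_{h,0}$ yields a unique solution $r_h\in Y_{h,0}$.

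For the estimate, I would fix an arbitrary $\boldsymbol{\xi}\in\mathbf{H}(\mathrm{div}^0;\Omega)$ and observe the key orthogonality: since $r_h\in Y_{h,0}\subset\mathrm{H}_0^1(\Omega)$ has vanishing trace and $\nabla\cdot\boldsymbol{\xi}=0$, integration by parts gives
\begin{align*}
(\boldsymbol{\xi},\nabla r_h) = -(\nabla\cdot\boldsymbol{\xi},r_h)+\langle\boldsymbol{\xi}\cdot\mathbf{n},r_h\rangle_{\partial\Omega}=0.
\end{align*}
Testing the variational equation with $s_h=r_h$ and inserting this vanishing term produces
\begin{align*}
\|\nabla r_h\|_{\mathbf{L}^2(\Omega)}^2 = -(\boldsymbol{\xi}_h,\nabla r_h) = (\boldsymbol{\xi}-\boldsymbol{\xi}_h,\nabla r_h)\leq \|\boldsymbol{\xi}-\boldsymbol{\xi}_h\|_{\mathbf{L}^2(\Omega)}\,\|\nabla r_h\|_{\mathbf{L}^2(\Omega)}
\end{align*}
by the Cauchy--Schwarz inequality, from which \eqref{eq:r_h-form-control} follows after dividing by $\|\nabla r_h\|_{\mathbf{L}^2(\Omega)}$ (the case $\nabla r_h=\mathbf 0$ being trivial).

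There is essentially no hard step here: existence and uniqueness are pure Lax--Milgram, and the main subtlety to check carefully is that $\boldsymbol{\xi}$ is only assumed to lie in $\mathbf{H}(\mathrm{div}^0;\Omega)$ without any normal-trace condition, so the cancellation of the boundary pairing must be justified by $r_h|_{\partial\Omega}=0$ rather than by $\boldsymbol{\xi}\cdot\mathbf{n}|_{\partial\Omega}=0$. This observation is what makes the bound on the right-hand side independent of any specific choice of comparison function and is the only nontrivial idea in the argument.
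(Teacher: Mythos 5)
Your proposal is correct and follows essentially the same route as the paper: both reduce the condition $\boldsymbol{\xi}_h+\nabla r_h\in\mathcal{H}_0^h$ to the discrete Poisson problem $(\nabla r_h,\nabla q_h)=-(\boldsymbol{\xi}_h,\nabla q_h)$, apply Lax--Milgram on $Y_{h,0}$, and then test with $q_h=r_h$ after using $(\boldsymbol{\xi},\nabla q_h)=0$ for divergence-free $\boldsymbol{\xi}$. Your explicit remark that the boundary pairing cancels because $r_h\in\mathrm{H}_0^1(\Omega)$, rather than via any normal-trace condition on $\boldsymbol{\xi}$, is a correct and slightly more careful justification of a step the paper leaves implicit.
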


\begin{proof}
According to the definition of the space $\mathcal{H}_0^h$ in \eqref{def:discrete-div0-space}, it suffices to show that for 
any $\boldsymbol{\xi}_h\in \mathbf{X}_{h, \tau0}+\nabla Y_{h, 0}$, there exists a unique $r_h\in Y_{h, 0}$ such that
\begin{align}
\label{eq:r_h-existence}
(\nabla r_h, \nabla q_h)=-(\boldsymbol{\xi}_h, \nabla q_h), \quad\forall q_h\in Y_{h, 0}.
\end{align}
Since $Y_{h, 0}$ is a Lagrange finite element space for $\mathrm{H}_0^1(\Omega)$, 
the symmetric bilinear form $(\nabla\cdot, \nabla\cdot)$ is elliptic and bounded on $Y_{h, 0}$, 
the existence of $r_h$ follows from Lax-Milgram theorem.
Combining $\boldsymbol{\xi}\in \mathbf{H}(\mathrm{div}^0;\Omega)$ 
with \eqref{eq:r_h-existence}, it follows that
\begin{align}
\label{eq:r_h-formcontrol}
(\nabla r_h, \nabla q_h)=(\boldsymbol{\xi}-\boldsymbol{\xi}_h, \nabla q_h), \quad\forall q_h\in Y_{h, 0}.
\end{align}
Taking $q_h=r_h$ in \eqref{eq:r_h-formcontrol} immediately yields \eqref{eq:r_h-form-control}.
\end{proof}

\section{Spectral convergence results for operators}\label{Section_Operator}
This section reformulates Problems \ref{mixed-problem} and \ref{discrete-mixed-problem} 
as spectral problems for the corresponding sequence of self-adjoint operators.  
In order to apply the spectral convergence theory of compact 
operators proposed in Section \ref{Section_Pre}, the aim of this section is to prove the 
resulting sequence of discrete operators, 
as the mesh size $h$ tends to $0^+$, possesses collective compactness 
in the sense of Definition \ref{def:collectively-compact} and converges pointwise 
to the continuous compact operator.

\subsection{Operators and their properties}
The construction of self-adjoint compact operators relies on the positive definite Maxwell problem.
First, let us define the operator $\mathcal{R}:\mathbf{L}^2(\Omega)\mapsto \mathbf{L}^2(\Omega)$, 
which maps any function $\mathbf{f}\in \mathbf{L}^2(\Omega)$ to $\mathcal{R}\mathbf{f}\in \mathcal{H}_0$ satisfying the following variational equation
\begin{align}\label{eq:def-operater-R}
(\nabla\times\mathcal{R}\mathbf{f}, \nabla\times\boldsymbol{\eta})+(\mathcal{R}\mathbf{f}, \boldsymbol{\eta})
=(\mathbf{f}, \boldsymbol{\eta}), \quad\forall\boldsymbol{\eta}\in \mathcal{H}_0.
\end{align}
This variational problem is clearly positive definite. According to
Lax-Milgram theorem, the solution $\mathcal{R}\mathbf{f}$ 
exists and is unique. Hence, the operator $\mathcal{R}$ is well-defined on $\mathbf{L}^2(\Omega)$, 
and taking $\boldsymbol{\eta}=\mathcal{R}\mathbf{f}$ yields the following inequality
\begin{align}
\label{eq:Rf-form-control}
\|\mathcal{R}\mathbf{f}\|_{\mathcal{H}_0}=\|\mathcal{R}\mathbf{f}\|_{\mathbf{H}(\mathbf{curl};\Omega)}
\leq\|\mathbf{f}\|_{\mathbf{L}^2(\Omega)}, \quad\forall \mathbf{f}\in \mathbf{L}^2(\Omega).
\end{align}

\begin{thm}
\label{thm:R-semidefinite-selfadjoint-tight}
The operator $\mathcal{R}:\mathbf{L}^2(\Omega)\mapsto \mathbf{L}^2(\Omega)$ 
is a compact self-adjoint positive semi-definite operator.
\end{thm}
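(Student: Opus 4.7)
The plan is to verify the three properties separately, all by leveraging the fact that \eqref{eq:def-operater-R} is a symmetric coercive variational problem on $\mathcal{H}_0$ whose output automatically lies in this well-behaved subspace.

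For positive semi-definiteness, I would simply test \eqref{eq:def-operater-R} with $\boldsymbol{\eta}=\mathcal{R}\mathbf{f}\in \mathcal{H}_0$ to obtain
\[
(\mathbf{f},\mathcal{R}\mathbf{f}) = \|\nabla\times\mathcal{R}\mathbf{f}\|_{\mathbf{L}^2(\Omega)}^2 + \|\mathcal{R}\mathbf{f}\|_{\mathbf{L}^2(\Omega)}^2 \geq 0,
\]
which is immediate since this is exactly the coercive bilinear form used in the Lax--Milgram argument that defines $\mathcal{R}$.

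For self-adjointness, the subtle point is that an arbitrary $\mathbf{g}\in\mathbf{L}^2(\Omega)$ is not a priori an admissible test function in \eqref{eq:def-operater-R}, so one cannot directly read off $(\mathcal{R}\mathbf{f},\mathbf{g})$. The resolution is to substitute $\boldsymbol{\eta}=\mathcal{R}\mathbf{g}\in\mathcal{H}_0$ into the equation defining $\mathcal{R}\mathbf{f}$ and symmetrically $\boldsymbol{\eta}=\mathcal{R}\mathbf{f}$ into the equation for $\mathcal{R}\mathbf{g}$, giving
\[
(\mathbf{f},\mathcal{R}\mathbf{g}) = (\nabla\times\mathcal{R}\mathbf{f},\nabla\times\mathcal{R}\mathbf{g}) + (\mathcal{R}\mathbf{f},\mathcal{R}\mathbf{g}) = (\mathbf{g},\mathcal{R}\mathbf{f}),
\]
so that $(\mathcal{R}\mathbf{f},\mathbf{g})=(\mathbf{f},\mathcal{R}\mathbf{g})$ for all $\mathbf{f},\mathbf{g}\in\mathbf{L}^2(\Omega)$.

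For compactness I would invoke Lemma \ref{X_N-X_T-embedded-theorem}. Since every image $\mathcal{R}\mathbf{f}$ lies in $\mathcal{H}_0\subset X_N(\Omega)$ and is divergence free, the a priori bound \eqref{eq:Rf-form-control} controls the full $X_N$-norm of $\mathcal{R}\mathbf{f}$ by $\|\mathbf{f}\|_{\mathbf{L}^2(\Omega)}$; composing this continuous map $\mathbf{L}^2(\Omega)\to X_N(\Omega)$ with the compact embedding $X_N(\Omega)\hookrightarrow\mathbf{L}^2(\Omega)$ then yields compactness of $\mathcal{R}$ on $\mathbf{L}^2(\Omega)$. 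I do not expect any serious obstacle here: the only technical care needed is in recognizing that the range of $\mathcal{R}$ is automatically in the test space $\mathcal{H}_0$, which is exactly what makes the self-adjointness substitution and the compactness regularity gain (via $X_N$ rather than merely $\mathbf{H}(\mathbf{curl};\Omega)$) both go through, even on the non-convex polyhedra where $\mathbf{H}^1$-regularity may fail.
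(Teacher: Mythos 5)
Your proposal is correct and follows essentially the same route as the paper: positive semi-definiteness and self-adjointness by testing the defining variational equation \eqref{eq:def-operater-R} with $\mathcal{R}\mathbf{f}$ and $\mathcal{R}\mathbf{g}$, and compactness by combining the a priori bound \eqref{eq:Rf-form-control} with the compact embedding of the divergence-free subspace of $X_N(\Omega)$ into $\mathbf{L}^2(\Omega)$ from Lemma \ref{X_N-X_T-embedded-theorem}. The only cosmetic difference is that the paper phrases the compactness step directly in terms of $\mathcal{H}_0$ being compactly embedded in $\mathbf{L}^2(\Omega)$, whereas you make the detour through $X_N(\Omega)$ explicit.
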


\begin{proof}
For any bounded set $B$ in $\mathbf{L}^2(\Omega)$, the inequality \eqref{eq:Rf-form-control} 
implies that $\mathcal{R}(B)$ is a bounded set in $\mathcal{H}_0$. 
Since $\mathcal{H}_0$ is compactly embedded into $\mathbf{L}^2(\Omega)$, 
$\mathcal{R}(B)$ is relatively compact in $\mathbf{L}^2(\Omega)$. 
Thus, $\mathcal{R}$ is a compact operator from $\mathbf{L}^2(\Omega)$ to itself.
For any $\mathbf{f}, \mathbf{g}\in \mathbf{L}^2(\Omega)$, we have following equalities
\begin{align}
(\mathcal{R}\mathbf{f}, \mathbf{g})&=(\mathbf{g}, \mathcal{R}\mathbf{f})=(\nabla\times\mathcal{R}\mathbf{g},
\nabla\times \mathcal{R}\mathbf{f})+(\mathcal{R}\mathbf{g}, \mathcal{R}\mathbf{f}) \nonumber\\
&=(\nabla\times\mathcal{R}\mathbf{f}, \nabla\times \mathcal{R}\mathbf{g})+(\mathcal{R}\mathbf{f}, \mathcal{R}\mathbf{g})
=(\mathbf{f}, \mathcal{R}\mathbf{g}).
\end{align}
Therefore, the operator $\mathcal{R}$ is self-adjoint. Furthermore,
\begin{align}
(\mathcal{R}\mathbf{f}, \mathbf{f})&=(\mathbf{f}, \mathcal{R}\mathbf{f})=(\nabla\times\mathcal{R}\mathbf{f},
\nabla\times \mathcal{R}\mathbf{f})+(\mathcal{R}\mathbf{f}, \mathcal{R}\mathbf{f})\geq 0,\ \ \ 
\forall 0\neq \mathbf f\in \mathbf L^2(\Omega). 
\end{align}
Thus $\mathcal{R}$ is also positive semi-definite.
Observe that it is easy to verify that $\mathbf{f}=\nabla q$ satisfies $\mathcal{R}\mathbf{f}=\mathbf{0}$ for any $q\in \mathrm{H}_0^1(\Omega)$, hence $\mathcal{R}$ is not positive definite.
\end{proof}
The following theorem presents the spectral equivalence between 
the continuous mixed Problem \ref{mixed-problem} and the operator $\mathcal{R}$.
According to Theorem \ref{thm:problem1-problem2-equivalence}, 
it suffices to prove the same conclusion for Problem \ref{primal-problem}.
\begin{thm}\label{thm:operater-R-equivalence-variational}
All eigenvalues of the operator $\mathcal{R}$ lie within the interval $[0, 1)$.
Moreover, $(\lambda, \boldsymbol{\xi})$ being an eigenpair of Problem \ref{primal-problem} 
is equivalent to that $(1/(\lambda+1),\boldsymbol{\xi})$ is an eigenpair 
of the operator $\mathcal{R}$.  
\end{thm}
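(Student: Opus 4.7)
The plan is to exploit the fact that the defining variational equation \eqref{eq:def-operater-R} for $\mathcal{R}$ is just the weak form of Problem \ref{primal-problem} shifted by an $\mathbf{L}^2$-mass term, so the equivalence reduces to a purely algebraic manipulation of two bilinear forms on the common space $\mathcal{H}_0$. Throughout, I will use that Problem \ref{primal-problem} has only strictly positive eigenvalues (a direct consequence of the Friedrichs inequality \eqref{eq:H_0-Friedrichs-inequality}) and that $\mathcal{R}$ is positive semi-definite by Theorem \ref{thm:R-semidefinite-selfadjoint-tight}.

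For the forward direction of the equivalence, let $(\lambda,\boldsymbol{\xi})$ be an eigenpair of Problem \ref{primal-problem} with $\lambda>0$. Adding $(\boldsymbol{\xi},\boldsymbol{\eta})$ to both sides of \eqref{Direct-variational-form} gives
\begin{align*}
(\nabla\times\boldsymbol{\xi},\nabla\times\boldsymbol{\eta})+(\boldsymbol{\xi},\boldsymbol{\eta})=(\lambda+1)(\boldsymbol{\xi},\boldsymbol{\eta}),\quad\forall\boldsymbol{\eta}\in\mathcal{H}_0.
\end{align*}
Dividing by $\lambda+1>0$, the left-hand side is exactly the left-hand side of \eqref{eq:def-operater-R} applied to $\boldsymbol{\xi}/(\lambda+1)$, while the right-hand side is $(\boldsymbol{\xi},\boldsymbol{\eta})$. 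Uniqueness of the solution in \eqref{eq:def-operater-R} (via Lax--Milgram) then forces $\mathcal{R}\boldsymbol{\xi}=\boldsymbol{\xi}/(\lambda+1)$.

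For the reverse direction, suppose $\mathcal{R}\boldsymbol{\xi}=\mu\boldsymbol{\xi}$ with $\mu\neq0$ and $\boldsymbol{\xi}\neq\mathbf{0}$. Then $\boldsymbol{\xi}=\mu^{-1}\mathcal{R}\boldsymbol{\xi}\in\mathrm{Im}(\mathcal{R})\subset\mathcal{H}_0$, so $\boldsymbol{\xi}$ is an admissible test function. Substituting $\mathcal{R}\boldsymbol{\xi}=\mu\boldsymbol{\xi}$ into \eqref{eq:def-operater-R} with $\mathbf{f}=\boldsymbol{\xi}$ and rearranging yields $(\nabla\times\boldsymbol{\xi},\nabla\times\boldsymbol{\eta})=((1-\mu)/\mu)(\boldsymbol{\xi},\boldsymbol{\eta})$ for all $\boldsymbol{\eta}\in\mathcal{H}_0$, which is \eqref{Direct-variational-form} with $\lambda=1/\mu-1$.

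For the spectral range, Theorem \ref{thm:R-semidefinite-selfadjoint-tight} gives $\sigma(\mathcal{R})\subset[0,\infty)$. To rule out $\mu\geq1$, I would argue directly: if $\mathcal{R}\boldsymbol{\xi}=\mu\boldsymbol{\xi}$ with $\mu\geq 1$ and $\boldsymbol{\xi}\neq\mathbf{0}$, then testing \eqref{eq:def-operater-R} with $\boldsymbol{\eta}=\boldsymbol{\xi}$ yields $\mu\|\nabla\times\boldsymbol{\xi}\|_{\mathbf{L}^2}^2=(1-\mu)\|\boldsymbol{\xi}\|_{\mathbf{L}^2}^2\leq 0$, forcing $\nabla\times\boldsymbol{\xi}=\mathbf{0}$, and then the Friedrichs inequality \eqref{eq:H_0-Friedrichs-inequality} on $\mathcal{H}_0$ gives $\boldsymbol{\xi}=\mathbf{0}$, a contradiction. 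Combined with the already-established correspondence $\mu=1/(\lambda+1)$ and $\lambda>0$, this confirms $\sigma(\mathcal{R})\subset[0,1)$. The proof is essentially mechanical; the only subtlety requiring attention is verifying that the eigenfunction produced in the reverse direction actually lives in $\mathcal{H}_0$, which I handle by invoking $\mathrm{Im}(\mathcal{R})\subset\mathcal{H}_0$, and carefully treating the boundary case $\mu=1$ via Friedrichs rather than relying on the (not yet justified) equivalence.
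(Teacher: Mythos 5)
Your proposal is correct and follows essentially the same route as the paper: both hinge on recognizing \eqref{eq:def-operater-R} as the weak form of Problem \ref{primal-problem} shifted by the $\mathbf{L}^2$-mass term, and both read off the correspondence $\mu=1/(\lambda+1)$ by testing against $\boldsymbol{\eta}\in\mathcal{H}_0$. You are in fact slightly more careful than the paper on two points it leaves implicit — verifying $\boldsymbol{\xi}\in\mathrm{Im}(\mathcal{R})\subset\mathcal{H}_0$ in the reverse direction and excluding the boundary case $\mu=1$ via the Friedrichs inequality rather than via the equivalence itself — but these are refinements of, not departures from, the paper's argument.
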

\begin{proof}
Since the operator $\mathcal{R}:\mathbf{L}^2(\Omega)\mapsto \mathbf{L}^2(\Omega)$ is a compact 
self-adjoint semi-positive definite operator, its eigenvalues are nonnegative real numbers 
by Riesz–Schauder theory.
Let $(\lambda, \boldsymbol{\xi})$ be an eigenpair of Problem \ref{primal-problem}, the following equalities hold according to \eqref{eq:def-operater-R} 
\begin{eqnarray}\label{Equality_3}
&&(\nabla\times \boldsymbol \xi, \nabla \times \boldsymbol \eta)+(\boldsymbol\xi,\boldsymbol\eta)=(\lambda+1)(\boldsymbol\xi,\boldsymbol\eta)\nonumber\\
&&=(\lambda+1)\left((\nabla\times \mathcal R\boldsymbol \xi, \nabla \times \boldsymbol \eta)+(\mathcal R\boldsymbol\xi,\boldsymbol\eta)\right),\ \ \ \forall\boldsymbol\eta\in \mathcal H_0. 
\end{eqnarray}
Then it is easy to deduce that $(\lambda+1)\mathcal R\boldsymbol\xi=\boldsymbol\xi$, 
which means that $(1/(\lambda+1),\boldsymbol\xi)$ is also an eigenpair of the operator $\mathcal R$. 

If  $(1/(\lambda+1),\boldsymbol\xi)$ is an eigenpair of the operator $\mathcal R$, 
from (\ref{Equality_3}), it is also easily seen that  $(\lambda, \boldsymbol{\xi})$ be an eigenpair of Problem \ref{primal-problem} and the proof is complete. 
\end{proof}

Analogous to the definition of the continuous operator $\mathcal{R}$, we can also define 
the discrete operator $\mathcal{R}_h:\mathbf{L}^2(\Omega)\mapsto \mathbf{L}^2(\Omega)$ mapping  
any function $\mathbf{f}\in \mathbf{L}^2(\Omega)$ to $\mathcal{R}_h\mathbf{f}\in \mathcal{H}_0^h$,  
which is divergence-free in the discrete sense and satisfies the following equation
\begin{align}\label{eq:def-operater-R_h}
(\nabla\times\mathcal{R}_h\mathbf{f}, \nabla\times\boldsymbol{\eta}_h)
+(\mathcal{R}_h\mathbf{f}, \boldsymbol{\eta}_h)=(\mathbf{f}, 
\boldsymbol{\eta}_h), \quad\forall\boldsymbol{\eta}_h\in \mathcal{H}_0^h.
\end{align}
Similar to the operator $\mathcal{R}$, the positive definiteness of this variational problem implies that the 
solution $\mathcal{R}_h\mathbf{f}$ exists and is unique. Hence, the operator $\mathcal{R}_h$ is well-defined 
on $\mathbf{L}^2(\Omega)$, and taking $\boldsymbol{\eta}_h=\mathcal{R}_h\mathbf{f}$ 
yields the following inequality:
\begin{align}\label{eq:Rhf-form-control}
\|\mathcal{R}_h\mathbf{f}\|_{\mathbf{H}(\mathbf{curl};\Omega)}\leq\|\mathbf{f}\|_{\mathbf{L}^2(\Omega)}, 
\quad\forall \mathbf{f}\in \mathbf{L}^2(\Omega).
\end{align}
In addition, the operator $\mathcal{R}_h$ has following property.
\begin{thm}
The operator $\mathcal{R}_h:\mathbf{L}^2(\Omega)\mapsto \mathbf{L}^2(\Omega)$ is 
a finite-rank self-adjoint positive semi-definite operator.
\end{thm}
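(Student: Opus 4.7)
The plan is to mirror the proof of Theorem \ref{thm:R-semidefinite-selfadjoint-tight} for the continuous operator $\mathcal{R}$, with the single structural change that compactness is replaced by the stronger property of being finite-rank, since the image now lies in a finite-dimensional subspace.

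For the finite-rank property, I would observe that by definition $\mathcal{R}_h \mathbf{f} \in \mathcal{H}_0^h \subset \mathbf{X}_{h,\tau 0}+\nabla Y_{h,0}$, and the latter space has finite dimension (since both $\mathbf{X}_{h,\tau 0}=\mathscr{V}_{h,\tau 0}^r$ and $Y_{h,0}=\mathscr{L}_{h,0}^{r+1}$ are finite-dimensional finite element spaces on the regular mesh $\mathcal{T}_h$). Hence $\mathrm{Im}(\mathcal{R}_h)$ is finite-dimensional, which is exactly the finite-rank property. As a byproduct, the bound \eqref{eq:Rhf-form-control} shows $\mathcal{R}_h$ is bounded on $\mathbf{L}^2(\Omega)$.

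For self-adjointness, I would exploit the symmetry of the bilinear form by choosing test functions across the two defining equations. Specifically, for $\mathbf{f},\mathbf{g}\in\mathbf{L}^2(\Omega)$, note that $\mathcal{R}_h\mathbf{g}\in\mathcal{H}_0^h$ is an admissible test function in \eqref{eq:def-operater-R_h} applied to $\mathbf{f}$, giving
\begin{equation*}
(\mathbf{f},\mathcal{R}_h\mathbf{g}) = (\nabla\times\mathcal{R}_h\mathbf{f},\nabla\times\mathcal{R}_h\mathbf{g}) + (\mathcal{R}_h\mathbf{f},\mathcal{R}_h\mathbf{g}).
\end{equation*}
Swapping the roles of $\mathbf{f}$ and $\mathbf{g}$ produces the same right-hand side, and combining with \eqref{eq:def-operater-R_h} applied to $\mathbf{g}$ with test function $\mathcal{R}_h\mathbf{f}$ yields $(\mathcal{R}_h\mathbf{f},\mathbf{g})=(\mathbf{f},\mathcal{R}_h\mathbf{g})$, which is self-adjointness on $\mathbf{L}^2(\Omega)$. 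Positive semi-definiteness follows immediately by setting $\mathbf{g}=\mathbf{f}$: then $(\mathcal{R}_h\mathbf{f},\mathbf{f}) = \|\nabla\times\mathcal{R}_h\mathbf{f}\|_{\mathbf{L}^2(\Omega)}^2 + \|\mathcal{R}_h\mathbf{f}\|_{\mathbf{L}^2(\Omega)}^2 \geq 0$. Note that $\mathcal{R}_h$ fails to be positive definite since for any $q_h\in Y_{h,0}$, every $\boldsymbol{\eta}_h\in\mathcal{H}_0^h$ satisfies $(\boldsymbol{\eta}_h,\nabla q_h)=0$, so $\mathbf{f}=\nabla q_h$ forces $\mathcal{R}_h\mathbf{f}=\mathbf{0}$.

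There is no real obstacle here: the argument is essentially a line-by-line transcription of the continuous proof, and the only point requiring momentary care is verifying that $\mathcal{R}_h\mathbf{g}$ is a legitimate test function in \eqref{eq:def-operater-R_h}, which is immediate from $\mathcal{R}_h\mathbf{g}\in\mathcal{H}_0^h$. The stronger finite-rank conclusion (compared with only compactness in the continuous case) comes for free from the finite-dimensionality of the ambient discrete space, not from any additional analytical ingredient such as the compact embedding of $\mathcal{H}_0$ into $\mathbf{L}^2(\Omega)$.
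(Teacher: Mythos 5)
Your proposal is correct and is exactly the argument the paper intends: the paper actually omits the proof of this theorem entirely, implicitly treating it as the obvious analogue of Theorem \ref{thm:R-semidefinite-selfadjoint-tight} for $\mathcal{R}$, and your line-by-line transcription (with compactness upgraded to finite rank via the finite-dimensionality of $\mathbf{X}_{h,\tau 0}+\nabla Y_{h,0}$, and the test-function swap in \eqref{eq:def-operater-R_h} for self-adjointness) fills that gap faithfully. No issues.
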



The following theorem presents the spectral equivalence between 
the discrete mixed Problem \ref{discrete-mixed-problem} and the operator $\mathcal{R}_h$.
\begin{thm}\label{thm:operater-Rh-equivalence-variational}
The eigenpair $(\lambda_h,  \mathbf{u}_h, p_h)$ of Problem \ref{discrete-mixed-problem} 
is equivalent to that $(1/(\lambda_h+1),\boldsymbol{\xi}_h)$ being an eigenpair of the operator $\mathcal{R}_h$, 
where $\lambda_h>0$ and $\boldsymbol{\xi}_h=\mathbf{u}_h+\nabla p_h$.
\end{thm}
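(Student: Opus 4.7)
The plan is to mirror, at the discrete level, the short argument used for Theorem \ref{thm:operater-R-equivalence-variational}. The crucial observation is that, by \eqref{def:discrete-div0-space}, every $\boldsymbol{\eta}_h\in\mathcal{H}_0^h$ admits a (not necessarily unique) representation $\boldsymbol{\eta}_h=\mathbf{v}_h+\nabla q_h$ with $\mathbf{v}_h\in\mathbf{X}_{h,\tau 0}$ and $q_h\in Y_{h,0}$, so that testing the mixed form \eqref{mix-discrete-variational-form} against its two kinds of test functions is essentially the same as testing the operator identity \eqref{eq:def-operater-R_h} against all of $\mathcal{H}_0^h$.

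For the forward direction, given an eigenpair $(\lambda_h,\mathbf{u}_h,p_h)$ of Problem \ref{discrete-mixed-problem} with $\lambda_h>0$, I would set $\boldsymbol{\xi}_h=\mathbf{u}_h+\nabla p_h$ and note that the second equation of \eqref{mix-discrete-variational-form} places $\boldsymbol{\xi}_h$ in $\mathcal{H}_0^h$. For any $\boldsymbol{\eta}_h=\mathbf{v}_h+\nabla q_h\in\mathcal{H}_0^h$, the identity $\nabla\times\nabla q_h=\mathbf{0}$ reduces the curl pairing to $(\nabla\times\mathbf{u}_h,\nabla\times\mathbf{v}_h)$, while the orthogonality $(\boldsymbol{\xi}_h,\nabla q_h)=0$ (coming from $\boldsymbol{\xi}_h\in\mathcal{H}_0^h$) collapses the $\mathrm{L}^2$ pairing to $(\boldsymbol{\xi}_h,\mathbf{v}_h)$. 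Substituting into the first equation of \eqref{mix-discrete-variational-form} then yields $(\nabla\times\boldsymbol{\xi}_h,\nabla\times\boldsymbol{\eta}_h)=\lambda_h(\boldsymbol{\xi}_h,\boldsymbol{\eta}_h)$. Adding $(\boldsymbol{\xi}_h,\boldsymbol{\eta}_h)$ to both sides and comparing with \eqref{eq:def-operater-R_h} applied to $\mathbf{f}=\boldsymbol{\xi}_h$ produces $(\lambda_h+1)\mathcal{R}_h\boldsymbol{\xi}_h=\boldsymbol{\xi}_h$, using the positive definiteness of the bilinear form $(\nabla\times\cdot,\nabla\times\cdot)+(\cdot,\cdot)$ on $\mathcal{H}_0^h$.

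For the converse, if $\mathcal{R}_h\boldsymbol{\xi}_h=\frac{1}{\lambda_h+1}\boldsymbol{\xi}_h$ with $\lambda_h>0$, then $\boldsymbol{\xi}_h=(\lambda_h+1)\mathcal{R}_h\boldsymbol{\xi}_h$ lies in the range of $\mathcal{R}_h$ and hence in $\mathcal{H}_0^h$, so I can write $\boldsymbol{\xi}_h=\mathbf{u}_h+\nabla p_h$ with $\mathbf{u}_h\in\mathbf{X}_{h,\tau 0}$ and $p_h\in Y_{h,0}$; the divergence constraint in \eqref{mix-discrete-variational-form} then follows directly from the membership $\boldsymbol{\xi}_h\in\mathcal{H}_0^h$. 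Multiplying \eqref{eq:def-operater-R_h} by $\lambda_h+1$ and subtracting $(\boldsymbol{\xi}_h,\boldsymbol{\eta}_h)$ gives $(\nabla\times\boldsymbol{\xi}_h,\nabla\times\boldsymbol{\eta}_h)=\lambda_h(\boldsymbol{\xi}_h,\boldsymbol{\eta}_h)$ for all $\boldsymbol{\eta}_h\in\mathcal{H}_0^h$.

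The main obstacle is that the first equation of \eqref{mix-discrete-variational-form} must be recovered for an \emph{arbitrary} test $\mathbf{v}_h\in\mathbf{X}_{h,\tau 0}$, not only for those $\mathbf{v}_h$ that already sit in $\mathcal{H}_0^h$. Here I would invoke Theorem \ref{cor:X_h,t0-to-H_0^h}: given any $\mathbf{v}_h\in\mathbf{X}_{h,\tau 0}$, pick $r_h\in Y_{h,0}$ with $\mathbf{v}_h+\nabla r_h\in\mathcal{H}_0^h$, apply the eigenrelation just established to this corrected test function, and observe that the curl pairing is unaffected by the $\nabla r_h$ correction, whereas $\lambda_h(\boldsymbol{\xi}_h,\nabla r_h)=0$ because $\boldsymbol{\xi}_h\in\mathcal{H}_0^h$. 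This delivers $(\nabla\times\mathbf{u}_h,\nabla\times\mathbf{v}_h)=\lambda_h(\mathbf{u}_h+\nabla p_h,\mathbf{v}_h)$ and closes the equivalence.
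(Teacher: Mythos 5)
Your proposal is correct and follows essentially the same route as the paper: the forward direction tests the operator identity against $\boldsymbol{\eta}_h=\mathbf{v}_h+\nabla q_h\in\mathcal{H}_0^h$ and uses the discrete divergence-free condition to collapse the pairings, while the converse recovers the first equation of \eqref{mix-discrete-variational-form} for arbitrary $\mathbf{v}_h\in\mathbf{X}_{h,\tau 0}$ via the gradient correction of Theorem \ref{cor:X_h,t0-to-H_0^h}, exactly as the paper does. The only cosmetic difference is that you make explicit the positive-definiteness argument used to pass from the tested identity to $(\lambda_h+1)\mathcal{R}_h\boldsymbol{\xi}_h=\boldsymbol{\xi}_h$, which the paper leaves implicit.
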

\begin{proof}
Let $(\lambda_h, \mathbf{u}_h, p_h)$ be an eigenpair of Problem \ref{discrete-mixed-problem}, and let $\mathbf{0}\neq \boldsymbol{\xi}_h=\mathbf{u}_h+\nabla p_h\in \mathcal{H}_0^h$.
Since any $\boldsymbol{\eta}_h\in \mathcal{H}_0^h$ can be decomposed as $\boldsymbol{\eta}_h=\mathbf{v}_h+\nabla q_h$, where $\mathbf{v}_h\in \mathbf{X}_{h, \tau0}$ and 
$q_h\in Y_{h, 0}$, it follows from \eqref{mix-discrete-variational-form} and \eqref{eq:def-operater-R_h} that
\begin{align}
&(\nabla\times \boldsymbol{\xi}_h, \nabla \times \boldsymbol{\eta}_h)+(\boldsymbol{\xi}_h,\boldsymbol{\eta}_h)
=\lambda_h(\boldsymbol\xi_h,\mathbf{v}_h)+(\boldsymbol{\xi}_h,\boldsymbol{\eta}_h)
=(\lambda_h+1)(\boldsymbol\xi_h,\boldsymbol\eta_h) \nonumber\\
=&(\lambda_h+1)\left((\nabla\times \mathcal R_h\boldsymbol \xi_h, \nabla \times \boldsymbol \eta_h)+(\mathcal R_h\boldsymbol\xi_h,\boldsymbol\eta_h)\right),\ \ \ \forall\boldsymbol\eta_h\in \mathcal H_0^h.  
\end{align}
Then it can be inferred that $(\lambda_h+1)\mathcal R_h\boldsymbol\xi_h=\boldsymbol\xi_h$, 
which means that $(1/(\lambda_h+1),\boldsymbol\xi_h)$ is also an eigenpair of the operator $\mathcal R_h$. 

Conversely, if $(1/(\lambda_h+1),\boldsymbol\xi_h)$ is an eigenpair of the operator $\mathcal R_h$, 
then for any $q_h\in Y_{h, 0}$, it follows from $\boldsymbol{\xi}_h=\mathbf{u}_h+\nabla p_h\in \mathcal{H}_0^h$ 
that $(\mathbf{u}_h+\nabla p_h, \nabla q_h)=0$.
Furthermore, for any $\mathbf{v}_h\in \mathbf{X}_{h, \tau0}$, by Theorem \ref{cor:X_h,t0-to-H_0^h}, 
there exists a unique $r_h\in Y_{h, 0}$ 
such that $\mathbf{v}_h+\nabla r_h=\boldsymbol{\eta}_h\in \mathcal{H}_0^h$. 
Combining this with \eqref{eq:def-operater-R_h} gives
\begin{align}
&(\nabla\times \mathbf{u}_h, \nabla\times \mathbf{v}_h)=(\nabla\times \boldsymbol{\xi}_h, \nabla\times \boldsymbol{\eta}_h)
=(\lambda_h+1)(\nabla\times \mathcal{R}_h\boldsymbol{\xi}_h, \nabla\times \boldsymbol{\eta}_h) \nonumber\\
&=(\lambda_h+1)(\boldsymbol{\xi}_h, \boldsymbol{\eta}_h)-(\lambda_h+1)(\mathcal{R}_h\boldsymbol{\xi}_h, \boldsymbol{\eta}_h)
=(\lambda_h+1)(\boldsymbol{\xi}_h, \boldsymbol{\eta}_h)-(\boldsymbol{\xi}_h, \boldsymbol{\eta}_h) \nonumber\\
&=\lambda_h(\boldsymbol{\xi}_h, \boldsymbol{\eta}_h)
=\lambda_h(\mathbf{u}_h+\nabla p_h, \mathbf{v}_h).
\end{align}
Therefore, $(\lambda_h, \mathbf{u}_h, p_h)$ is also an eigenpair of Problem \ref{discrete-mixed-problem}.
\end{proof}

Since the discrete space $\mathcal{H}_0^h$ is not a subspace of $\mathcal{H}_0$, on which the divergence-free condition is imposed, we cannot directly obtain the orthogonality of $\mathcal{R}\mathbf{f}-\mathcal{R}_{h}\mathbf{f}$ 
to $\mathcal{H}_0^h$ under the $\mathbf{H}(\mathbf{curl};\Omega)$ inner product for all $\mathbf{f}\in \mathbf{L}^2(\Omega)$.
However, if $\mathbf{f}\in \mathbf{H}(\mathrm{div}^0;\Omega)$, the following orthogonality theorem holds.
\begin{thm}
\label{thm:Rxi-Rhxi-orthogonal}
For any function $\boldsymbol{\xi}\in \mathbf{H}(\mathrm{div}^0;\Omega)$, $\mathcal{R}\boldsymbol{\xi}-\mathcal{R}_{h}\boldsymbol{\xi}$ 
is orthogonal to the sum space $\mathbf{X}_{h, \tau0}+\nabla Y_{h, 0}$ under the $\mathbf{H}(\mathbf{curl};\Omega)$ inner product, i.e.,
\begin{align}
\label{eq:eigenvector-Rxi-Rhxi-orthogonal}
(\mathcal{R}\boldsymbol{\xi}-\mathcal{R}_{h}\boldsymbol{\xi}, \mathbf{v}_h+\nabla q_h)_{\mathbf{H}(\mathbf{curl};\Omega)}=0, 
\quad\forall \mathbf{v}_h\in \mathbf{X}_{h, \tau0}, \ \forall q_h\in Y_{h, 0}.
\end{align}
\end{thm}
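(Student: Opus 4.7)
The plan is to establish the orthogonality by showing that both $\mathcal{R}\boldsymbol{\xi}$ and $\mathcal{R}_{h}\boldsymbol{\xi}$ satisfy one and the same Galerkin identity when tested against every element of $\mathbf{X}_{h,\tau 0}+\nabla Y_{h,0}$, so subtraction yields the desired orthogonality in the $\mathbf{H}(\mathbf{curl};\Omega)$ inner product. The principal challenge is that $\mathcal{H}_0^h$ is not contained in $\mathcal{H}_0$ (the discrete space is only divergence-free in a discrete sense), so the continuous defining equation \eqref{eq:def-operater-R} is not immediately applicable to discrete test functions. The divergence-free hypothesis on $\boldsymbol{\xi}$ will be the lever that lets us extend both defining identities to the common sum space.

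First I would extend the continuous identity to all of $\mathbf{H}_0(\mathbf{curl};\Omega)$. Given an arbitrary $\boldsymbol{\zeta}\in \mathbf{H}_0(\mathbf{curl};\Omega)$, use the standard continuous Helmholtz decomposition to write $\boldsymbol{\zeta}=\boldsymbol{\zeta}_0+\nabla g$ with $\boldsymbol{\zeta}_0\in\mathcal{H}_0$ and $g\in \mathrm{H}_0^1(\Omega)$. The gradient piece contributes nothing to any of the three terms: it is killed by the curl, it is $L^2$-orthogonal to $\mathcal{R}\boldsymbol{\xi}\in\mathcal{H}_0\subset\mathbf{H}(\mathrm{div}^0;\Omega)$ (integrate by parts, using $g|_{\partial\Omega}=0$), and it is $L^2$-orthogonal to $\boldsymbol{\xi}\in\mathbf{H}(\mathrm{div}^0;\Omega)$ for the same reason. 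Applying \eqref{eq:def-operater-R} to $\boldsymbol{\zeta}_0$ therefore yields
\begin{equation*}
(\nabla\times\mathcal{R}\boldsymbol{\xi},\nabla\times\boldsymbol{\zeta})+(\mathcal{R}\boldsymbol{\xi},\boldsymbol{\zeta})=(\boldsymbol{\xi},\boldsymbol{\zeta}),\quad\forall\boldsymbol{\zeta}\in\mathbf{H}_0(\mathbf{curl};\Omega),
\end{equation*}
which in particular holds for every $\mathbf{v}_h+\nabla q_h\in\mathbf{X}_{h,\tau 0}+\nabla Y_{h,0}\subset\mathbf{H}_0(\mathbf{curl};\Omega)$.

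Next I would do the analogous extension on the discrete side. Fix $\mathbf{v}_h+\nabla q_h\in\mathbf{X}_{h,\tau 0}+\nabla Y_{h,0}$ and invoke Theorem \ref{cor:X_h,t0-to-H_0^h} to produce a unique $r_h\in Y_{h,0}$ with $\boldsymbol{\eta}_h:=\mathbf{v}_h+\nabla r_h\in\mathcal{H}_0^h$. Writing
\begin{equation*}
\mathbf{v}_h+\nabla q_h=\boldsymbol{\eta}_h+\nabla(q_h-r_h),
\end{equation*}
the gradient correction $\nabla(q_h-r_h)$ annihilates the curl term, is $L^2$-orthogonal to $\mathcal{R}_h\boldsymbol{\xi}\in\mathcal{H}_0^h$ by the very definition \eqref{def:discrete-div0-space} of $\mathcal{H}_0^h$ (since $q_h-r_h\in Y_{h,0}$), and is $L^2$-orthogonal to $\boldsymbol{\xi}\in\mathbf{H}(\mathrm{div}^0;\Omega)$ again by integration by parts (since $q_h-r_h\in\mathrm{H}_0^1(\Omega)$). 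Thus, applying the discrete defining relation \eqref{eq:def-operater-R_h} to $\boldsymbol{\eta}_h$ and reinstating the gradient correction produces exactly
\begin{equation*}
(\nabla\times\mathcal{R}_{h}\boldsymbol{\xi},\nabla\times(\mathbf{v}_h+\nabla q_h))+(\mathcal{R}_{h}\boldsymbol{\xi},\mathbf{v}_h+\nabla q_h)=(\boldsymbol{\xi},\mathbf{v}_h+\nabla q_h).
\end{equation*}

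Subtracting the two identities gives \eqref{eq:eigenvector-Rxi-Rhxi-orthogonal} at once. The whole proof is essentially bookkeeping once the two extensions are in place; the only nontrivial ingredient is recognizing that $\boldsymbol{\xi}\in\mathbf{H}(\mathrm{div}^0;\Omega)$ plus the boundary conditions on the scalar spaces allow each gradient correction to be absorbed harmlessly. That recognition, together with the discrete Helmholtz-type splitting from Theorem \ref{cor:X_h,t0-to-H_0^h}, is what makes the mismatch $\mathcal{H}_0^h\not\subset\mathcal{H}_0$ a non-issue.
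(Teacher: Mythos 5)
Your proposal is correct and follows essentially the same route as the paper: both arguments correct the test function by a gradient (a continuous Helmholtz lift into $\mathcal{H}_0$ on one side, the discrete lift of Theorem \ref{cor:X_h,t0-to-H_0^h} into $\mathcal{H}_0^h$ on the other) and use $\boldsymbol{\xi}\in\mathbf{H}(\mathrm{div}^0;\Omega)$ together with the vanishing boundary values of the scalar potentials to show every gradient correction drops out. The only difference is organizational — you extend each defining identity to the whole sum space and subtract once, while the paper handles $\mathbf{v}_h$ and $\nabla q_h$ separately — which changes nothing of substance.
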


\begin{proof}
Given $\boldsymbol{\xi}\in \mathbf{H}(\mathrm{div}^0;\Omega)$.
For any $\mathbf{v}_h\in \mathbf{X}_{h, \tau0}$,
there is a unique function $q_{h, 1}\in \mathrm{H}_0^1(\Omega)$ such that $-\Delta q_{h, 1}=\nabla\cdot\mathbf{v}_h$.
Then $\mathbf{v}_h+\nabla q_{h, 1}\in \mathbf{H}_0(\mathbf{curl};\Omega)\cap \mathbf{H}(\mathrm{div}^0;\Omega)=\mathcal{H}_0$.
Furthermore, by Corollary \ref{cor:X_h,t0-to-H_0^h}, there exists a unique function $q_{h, 2}\in Y_{h, 0}$ such that $\mathbf{v}_h+\nabla q_{h, 2}\in \mathcal{H}_0^h$.
Taking the test functions as $\mathbf{v}_h+\nabla q_{h, 1}\in \mathcal{H}_0$ and $\mathbf{v}_h+\nabla q_{h, 2}\in \mathcal{H}_0^h$ in
\eqref{eq:def-operater-R} and \eqref{eq:def-operater-R_h}, respectively, we obtain
\begin{align}\label{single-eigenvalue-estimate_Rxi}
(\nabla\times\mathcal{R}\boldsymbol{\xi}, \nabla\times \mathbf{v}_h)+(\mathcal{R}\boldsymbol{\xi}, \mathbf{v}_h+\nabla q_{h, 1})=(\boldsymbol{\xi}, \mathbf{v}_h+\nabla q_{h, 1}), \\
\label{single-eigenvalue-estimate_Rhxi}
(\nabla\times\mathcal{R}_{h}\boldsymbol{\xi}, \nabla\times \mathbf{v}_h)+(\mathcal{R}_{h}\boldsymbol{\xi}, \mathbf{v}_h+\nabla q_{h, 2})=(\boldsymbol{\xi}, \mathbf{v}_h+\nabla q_{h, 2}).
\end{align}
Since $\boldsymbol{\xi}\in \mathbf{H}(\mathrm{div}^0;\Omega)$, $\mathcal{R}\boldsymbol{\xi}\in \mathcal{H}_0$, $\mathcal{R}_{h}\boldsymbol{\xi}\in \mathcal{H}_0^h$, $q_{h, 1}\in \mathrm{H}_0^1(\Omega)$ and $q_{h, 2}\in Y_{h, 0}$, it follows that
\begin{align}
\label{single-eigenvalue-estimate_qh1zero}
(\boldsymbol{\xi}, \nabla q_{h, 1})=(\mathcal{R}\boldsymbol{\xi}, \nabla q_{h, 1})=0, \quad
(\boldsymbol{\xi}, \nabla q_{h, 2})=(\mathcal{R}_{h}\boldsymbol{\xi}, \nabla q_{h, 2})=0.
\end{align}
Substituting \eqref{single-eigenvalue-estimate_qh1zero} into \eqref{single-eigenvalue-estimate_Rxi} and \eqref{single-eigenvalue-estimate_Rhxi} gives
\begin{align}
\label{eq:eigenvector-orthogonality}
(\nabla\times\mathcal{R}\boldsymbol{\xi}, \nabla\times \mathbf{v}_h)+(\mathcal{R}\boldsymbol{\xi}, \mathbf{v}_h)=(\boldsymbol{\xi}, \mathbf{v}_h)=
(\nabla\times\mathcal{R}_{h}\boldsymbol{\xi}, \nabla\times \mathbf{v}_h)+(\mathcal{R}_{h}\boldsymbol{\xi}, \mathbf{v}_h),
\end{align}
which shows that $\mathcal{R}\boldsymbol{\xi}-\mathcal{R}_{h}\boldsymbol{\xi}$ 
is orthogonal to $\mathbf{X}_{h, \tau0}$ under the $\mathbf{H}(\mathbf{curl};\Omega)$ inner product:
\begin{align}
\label{orthogonality_Rxi-Rhxi_vh}
(\nabla\times(\mathcal{R}\boldsymbol{\xi}-\mathcal{R}_{h}\boldsymbol{\xi}), \nabla\times \mathbf{v}_h)
+(\mathcal{R}\boldsymbol{\xi}-\mathcal{R}_{h}\boldsymbol{\xi}, \mathbf{v}_h)=0, \quad\forall \mathbf{v}_h\in \mathbf{X}_{h, \tau0}.
\end{align}
Moreover, since $\mathcal{R}\boldsymbol{\xi}\in \mathcal{H}_0$ 
and $\mathcal{R}_{h}\boldsymbol{\xi}\in \mathcal{H}_0^h$, 
$\mathcal{R}\boldsymbol{\xi}-\mathcal{R}_{h}\boldsymbol{\xi}$ is orthogonal to $\nabla Y_{h, 0}$:
\begin{align}\label{orthogonality_Rxi-Rhxi_gqh}
(\mathcal{R}\boldsymbol{\xi}-\mathcal{R}_{h}\boldsymbol{\xi}, \nabla q_h)_{\mathbf{H}(\mathbf{curl};\Omega)}
=(\mathcal{R}\boldsymbol{\xi}-\mathcal{R}_{h}\boldsymbol{\xi}, \nabla q_h)=0, \quad\forall q_h\in Y_{h, 0}.
\end{align}
Combining \eqref{orthogonality_Rxi-Rhxi_vh} and \eqref{orthogonality_Rxi-Rhxi_gqh} yields the orthogonality of 
$\mathcal{R}\boldsymbol{\xi}-\mathcal{R}_{h}\boldsymbol{\xi}$ to the sum space $\mathbf{X}_{h, \tau0}+\nabla Y_{h, 0}$ 
under the $\mathbf{H}(\mathbf{curl};\Omega)$ inner product \eqref{eq:eigenvector-Rxi-Rhxi-orthogonal}.
\end{proof}

\subsection{Collective compactness}
Let $\wedge=\{h_n\}_{n=1}^\infty$ be a strictly monotonically decreasing 
sequence of positive numbers converging to zero. 
In this subsection, we demonstrate the collective compactness of the operator sequence $\{\mathcal{R}_{h_n}\}_{n=1}^\infty$, which is a necessary condition for Lemmas \ref{selfadjoint-operator-eigenvalue-order} and \ref{operator-eigenvector-order}.
In fact, this is closely related to the discrete compactness property 
of the space sequence $\mathcal{H}_0^h$ into $\mathbf{L}^2(\Omega)$, 
which is analogous to the concept of discrete 
compactness for N\'{e}d\'{e}lec finite element spaces introduced 
by F. Kikuchi \cite{key43}.
\begin{Def}\label{def:discrete-compactness-property}
The space sequence $\{\mathcal{H}_0^h\}_{h\in\wedge}$ is said to possess 
the discrete compactness property with respect to the $\mathbf{H}(\mathbf{curl};\Omega)$ 
norm and embedding into $\mathbf{L}^2(\Omega)$ if, for any sequence of 
functions $\{\boldsymbol{\xi}_{h}\}_{h\in\wedge}$ satisfying the following two conditions: 
\begin{itemize}
\item [{\rm (i)}]
$\boldsymbol{\xi}_{h}\in \mathcal{H}_0^h, \forall h\in\wedge$,
\item[{\rm (ii)}] 
There exists a constant $C>0$ such that $\|\boldsymbol{\xi}_{h}\|_{\mathbf{H}(\mathbf{curl};\Omega)}<C$, $\forall h\in\wedge$, 
\end{itemize}
there exists a subsequence $\{\boldsymbol{\xi}_{h}\}_{h\in\wedge_s}, \wedge_s\subset\wedge$, 
and a function $\boldsymbol{\xi}_0\in \mathbf{L}^2(\Omega)$ such that
\begin{align}
\|\boldsymbol{\xi}_{h}-\boldsymbol{\xi}_0\|_{\mathbf{L}^2(\Omega)}\to0 
\quad\mathrm{as}\quad h\to0~\mathrm{in}\wedge_s.
\end{align}
\end{Def}

\begin{thm}\label{thm:discrete-compactness-property-to-collectively-compact}
If for any strictly monotonically decreasing sequence of positive 
numbers $\wedge=\{h_n\}_{n=1}^\infty$ converging to zero, 
the space sequence $\{\mathcal{H}_0^h\}_{h\in\wedge}$ possesses 
the discrete compactness property described in Definition \ref{def:discrete-compactness-property}, 
then for any such sequence $\wedge=\{h_n\}_{n=1}^\infty$, 
the operator sequence $\{\mathcal{R}_{h}\}_{h\in\wedge}$ is collectively compact.
\end{thm}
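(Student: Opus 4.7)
The plan is to verify collective compactness directly from Definition \ref{def:collectively-compact}: for any bounded set $B\subset \mathbf{L}^2(\Omega)$, I must show that every sequence drawn from $W=\bigcup_{h\in\wedge}\mathcal{R}_{h}(B)$ admits an $\mathbf{L}^2(\Omega)$-convergent subsequence. I would begin by taking an arbitrary sequence $\{g_j\}_{j=1}^\infty\subset W$ and writing $g_j=\mathcal{R}_{h_{n_j}}\mathbf{f}_j$ with $\mathbf{f}_j\in B$ and $h_{n_j}\in\wedge$, then split the analysis into two cases depending on how the indices $n_j$ behave.

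In the first case, some fixed $h\in\wedge$ is used infinitely often, so after extraction all $g_j$ lie in $\mathcal{R}_{h}(B)$ for a single $h$. Since $\mathcal{R}_{h}$ is finite rank, the uniform bound $\|\mathcal{R}_{h}\mathbf{f}_j\|_{\mathbf{H}(\mathbf{curl};\Omega)}\leq \|\mathbf{f}_j\|_{\mathbf{L}^2(\Omega)}$ from \eqref{eq:Rhf-form-control} already makes $\mathcal{R}_{h}(B)$ relatively compact in $\mathbf{L}^2(\Omega)$, and the desired subsequence exists. In the second case, each index occurs only finitely often, so I can extract a strictly monotonically decreasing subsequence $\{h_{n_{j_k}}\}_{k=1}^\infty$ of $\wedge$ converging to zero. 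By construction $g_{j_k}\in \mathcal{H}_0^{h_{n_{j_k}}}$, and \eqref{eq:Rhf-form-control} combined with the boundedness of $B$ yields $\|g_{j_k}\|_{\mathbf{H}(\mathbf{curl};\Omega)}\leq C$ uniformly in $k$. Applying the hypothesized discrete compactness property to this thinned sequence of spaces then produces a further $\mathbf{L}^2(\Omega)$-convergent subsequence, completing the verification.

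The main obstacle is not analytic but combinatorial: the hypothesis furnishes one convergent function per element of the parameter set $\wedge$, whereas collective compactness asks for convergence of arbitrary sequences in $W$ that may reuse some operators $\mathcal{R}_{h}$ many times and skip others entirely. Handling this cleanly requires the case split above, so that either finite-rank compactness or the discrete compactness property is directly applicable. Once that bookkeeping is arranged, the uniform curl-norm bound \eqref{eq:Rhf-form-control} together with the hypothesis does all the remaining work, and no new regularity, interpolation, or duality arguments are needed for this theorem.
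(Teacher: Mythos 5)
Your proposal is correct and follows essentially the same argument as the paper: the same reduction to sequences in $W$, the same dichotomy between a single mesh parameter recurring infinitely often (handled by the finite rank of $\mathcal{R}_h$ together with the bound \eqref{eq:Rhf-form-control}) and the remaining case where one extracts a strictly decreasing subsequence of mesh sizes and invokes the discrete compactness hypothesis. No gaps.
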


\begin{proof}
To prove that the operator sequence $\{\mathcal{R}_{h}\}_{h\in\wedge}$ 
is collectively compact, according to Definition \ref{def:collectively-compact}, 
it suffices to show that for any bounded subset $\mathbf{B}\subset\mathbf{L}^2(\Omega)$, the set
\begin{align}
\mathbf{W}=\bigcup_{h\in\wedge} \mathcal{R}_{h}(\mathbf{B})=\{\mathcal{R}_{h}\mathbf{f}:
\forall \mathbf{f}\in \mathbf{B}, ~\forall h\in\wedge\}
\end{align}
is relatively compact in $\mathbf{L}^2(\Omega)$.
Equivalently, for any fixed sequence $\{\boldsymbol{\xi}_j\}_{j=1}^\infty\subset \mathbf{W}$, 
we need to show that $\{\boldsymbol{\xi}_j\}_{j=1}^\infty$ has a 
convergent subsequence in $\mathbf{L}^2(\Omega)$.
Assume the constant $C$ is an upper bound for $\mathbf{B}$ in $\mathbf{L}^2(\Omega)$.
For each $\boldsymbol{\xi}_j\in \mathbf{W}$, 
there exists some $\mathbf{f}_j\in \mathbf{B}$ and index $h_{n_j}\in\wedge$ 
such that $\boldsymbol{\xi}_j=\mathcal{R}_{h_{n_j}}\mathbf{f}_j$, hence $\boldsymbol{\xi}_j\in \mathcal{H}_0^{h_{n_j}}$.
Now, consider two cases based on the number of distinct elements in the index set $\{h_{n_j}\}_{j=1}^\infty$.

If the index set $\{h_{n_j}\}_{j=1}^\infty$ contains only a finite number of distinct elements,
which implies $\{h_{n_j}\}_{j=1}^\infty$ has finite cardinality.
Then there must exist a positive integer $k$ such that countably many $\boldsymbol{\xi}_j$ satisfy $\boldsymbol{\xi}_j=\mathcal{R}_{h_{n_k}}\mathbf{f}_j\in \mathcal{H}_0^{h_{n_k}}$.
These $\boldsymbol{\xi}_j$ form a countable subset $\{\boldsymbol{\xi}_j\}_{j\in \wedge_k}$ of $\{\boldsymbol{\xi}_j\}_{j=1}^\infty$, where $\wedge_k\subset\wedge$.
Since $\mathcal{R}_{h_{n_k}}$ is a finite-rank operator from $\mathbf{L}^2(\Omega)$ to itself, and $\{\mathbf{f}_j\}_{j\in \wedge_k}\subset \mathbf{B}$ 
is a bounded sequence in $\mathbf{L}^2(\Omega)$ with upper bound $C$, it follows that $\{\boldsymbol{\xi}_j\}_{j\in \wedge_k}$ 
is sequentially compact in $\mathbf{L}^2(\Omega)$, meaning there exists a subsequence of $\{\boldsymbol{\xi}_j\}_{j\in \wedge_k}$ 
that converges in $\mathbf{L}^2(\Omega)$.

Otherwise, assume that the index set $\{h_{n_j}\}_{j=1}^\infty$ contains countably many distinct elements.
In this case, we can extract a subsequence from $\{\boldsymbol{\xi}_j\}_{j=1}^\infty$, still denoted as $\{\boldsymbol{\xi}_j\}_{j=1}^\infty$ 
for convenience, such that $\{n_j\}_{j=1}^\infty$ is strictly monotonically increasing with respect to $j$, 
and thus $\{h_{n_j}\}_{j=1}^\infty$ is strictly monotonically decreasing and converges to zero.
By the assumption of this theorem,
the space sequence $\{\mathcal{H}_0^{h_{n_j}}\}_{j=1}^\infty$ possesses the discrete compactness property described 
in Definition \ref{def:discrete-compactness-property}.
Furthermore,
inequality \eqref{eq:Rhf-form-control} gives
\begin{align}
\label{eq:discrete-compactness-property2}
\|\boldsymbol{\xi}_j\|_{\mathbf{H}(\mathbf{curl};\Omega)}=\|\mathcal{R}_{h_{n_j}}\mathbf{f}_j\|_{\mathbf{H}(\mathbf{curl};\Omega)}
\leq\|\mathbf{f}_j\|_{\mathbf{L}^2(\Omega)}\leq C, \quad\forall 1\leq j\leq \infty.
\end{align}
Combining the discrete compactness of $\{\mathcal{H}_0^{h_{n_j}}\}_{j=1}^\infty$
and the boundedness of $\{\boldsymbol{\xi}_j\}_{j=1}^\infty$ from \eqref{eq:discrete-compactness-property2}, 
it immediately follows that $\{\boldsymbol{\xi}_j\}_{j=1}^\infty$ has a subsequence that converges in $\mathbf{L}^2(\Omega)$.
\end{proof}

Theorem \ref{thm:discrete-compactness-property-to-collectively-compact} reduces the proof of the collective compactness 
of the operator sequence $\{\mathcal{R}_{h}\}_{h\in\wedge}$ to proving the discrete compactness of the space sequence $\{\mathcal{H}_0^h\}_{h\in\wedge}$,
which is not a trivial result on a general bounded Lipschitz domain.
Fortunately, it follows from Lemma \ref{X_N-X_T-embedded-theorem} that there exists a constant $s>1/2$ such that the space $X_N(\Omega)$ 
is continuously embedded into $\mathbf{H}^s(\Omega)$ when $\Omega$ is a bounded Lipschitz polyhedron, which is precisely the case considered in this paper.
This immediately leads to the following conclusion.

\begin{thm}\label{thm:discrete-compactness-property}
For polyhedron $\Omega$, the space sequence $\{\mathcal{H}_0^h\}_{h\in\wedge}$ possesses the discrete compactness property 
with respect to the $\mathbf{H}(\mathbf{curl};\Omega)$ norm and embedding into $\mathbf{L}^2(\Omega)$.
\end{thm}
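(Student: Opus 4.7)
The plan is to combine the continuous Helmholtz decomposition with the commutative-diagram property of the smoothed projections in Lemma~\ref{lem:Commutative-Diagram} and the compact embedding in Lemma~\ref{X_N-X_T-embedded-theorem}. Given any sequence $\{\boldsymbol{\xi}_h\}_{h\in\wedge}$ with $\boldsymbol{\xi}_h\in\mathcal{H}_0^h$ and $\|\boldsymbol{\xi}_h\|_{\mathbf{H}(\mathbf{curl};\Omega)}\le C$, I would first split $\boldsymbol{\xi}_h=\mathbf{w}_h+\nabla\psi_h$, where $\psi_h\in\mathrm{H}_0^1(\Omega)$ is the unique solution of $-\Delta\psi_h=-\nabla\cdot\boldsymbol{\xi}_h$ in the distributional sense and $\mathbf{w}_h=\boldsymbol{\xi}_h-\nabla\psi_h$. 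Because $\boldsymbol{\xi}_h\times\mathbf{n}=\mathbf{0}$ and $\nabla\psi_h\times\mathbf{n}=\mathbf{0}$ on $\partial\Omega$ (the latter from $\psi_h|_{\partial\Omega}=0$), $\mathbf{w}_h$ belongs to $\mathcal{H}_0=\mathbf{H}_0(\mathbf{curl};\Omega)\cap\mathbf{H}(\mathrm{div}^0;\Omega)$ and satisfies $\nabla\times\mathbf{w}_h=\nabla\times\boldsymbol{\xi}_h$.

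Next, the Friedrichs inequality \eqref{eq:H_0-Friedrichs-inequality} on $\mathcal{H}_0$ bounds $\{\mathbf{w}_h\}$ uniformly in $\mathbf{H}(\mathbf{curl};\Omega)$, hence in $X_N(\Omega)$. Since $\Omega$ is a bounded Lipschitz polyhedron, Lemma~\ref{X_N-X_T-embedded-theorem} gives the compact embedding of $X_N(\Omega)$ into $\mathbf{L}^2(\Omega)$, so I pass to a subsequence (not relabelled) with $\mathbf{w}_h\to\mathbf{w}_0$ strongly in $\mathbf{L}^2(\Omega)$.

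The core step is to show $\nabla\psi_h\to\mathbf{0}$ in $\mathbf{L}^2(\Omega)$. Testing the discrete divergence-free condition defining $\mathcal{H}_0^h$ with $r_h=\Pi_{\mathbf{grad},h}^{r+1}\psi_h\in Y_{h,0}$, and using $(\mathbf{w}_h,\nabla s)=0$ for every $s\in\mathrm{H}_0^1(\Omega)$, I obtain $(\nabla\psi_h,\nabla\Pi_{\mathbf{grad},h}^{r+1}\psi_h)=0$. The commutative diagram from Lemma~\ref{lem:Commutative-Diagram} converts this into $(\nabla\psi_h,\boldsymbol{\Pi}_{\mathbf{curl},h}^{r+1}\nabla\psi_h)=0$, whence by Cauchy--Schwarz
\begin{equation*}
\|\nabla\psi_h\|_{\mathbf{L}^2(\Omega)}\le\|\nabla\psi_h-\boldsymbol{\Pi}_{\mathbf{curl},h}^{r+1}\nabla\psi_h\|_{\mathbf{L}^2(\Omega)}.
\end{equation*}
The inclusion $\mathscr{V}_{h,\tau0}^r\subset\mathscr{N}_{h,0}^{r+1}$, coming from continuity together with $[\mathbb{P}_r(K)]^3\subset[\mathbb{P}_r(K)]^3\oplus\mathbf{x}\times[\tilde{\mathbb{P}}_r(K)]^3$, guarantees $\boldsymbol{\Pi}_{\mathbf{curl},h}^{r+1}\boldsymbol{\xi}_h=\boldsymbol{\xi}_h$ since $\boldsymbol{\Pi}_{\mathbf{curl},h}^{r+1}$ is a projection onto $\mathscr{N}_{h,0}^{r+1}$. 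Applying $\boldsymbol{\Pi}_{\mathbf{curl},h}^{r+1}$ to $\nabla\psi_h=\boldsymbol{\xi}_h-\mathbf{w}_h$ therefore yields $\nabla\psi_h-\boldsymbol{\Pi}_{\mathbf{curl},h}^{r+1}\nabla\psi_h=\boldsymbol{\Pi}_{\mathbf{curl},h}^{r+1}\mathbf{w}_h-\mathbf{w}_h$, so that
\begin{equation*}
\|\nabla\psi_h\|_{\mathbf{L}^2(\Omega)}\le\|\mathbf{w}_h-\boldsymbol{\Pi}_{\mathbf{curl},h}^{r+1}\mathbf{w}_h\|_{\mathbf{L}^2(\Omega)}.
\end{equation*}
Writing $\mathbf{w}_h-\boldsymbol{\Pi}_{\mathbf{curl},h}^{r+1}\mathbf{w}_h=(\mathbf{w}_0-\boldsymbol{\Pi}_{\mathbf{curl},h}^{r+1}\mathbf{w}_0)+(I-\boldsymbol{\Pi}_{\mathbf{curl},h}^{r+1})(\mathbf{w}_h-\mathbf{w}_0)$ and using the uniform $\mathbf{L}^2$-boundedness together with the pointwise $\mathbf{L}^2$-convergence of $\boldsymbol{\Pi}_{\mathbf{curl},h}^{r+1}$ from Lemma~\ref{lem:Commutative-Diagram}, the right-hand side tends to $0$. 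Hence $\boldsymbol{\xi}_h=\mathbf{w}_h+\nabla\psi_h\to\mathbf{w}_0$ in $\mathbf{L}^2(\Omega)$, which is exactly the discrete compactness property.

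The main obstacle I anticipate is precisely this core step: absorbing the high-frequency gradient component $\nabla\psi_h$ into a controllable smoothing error on the already-convergent piece $\mathbf{w}_h$. This mechanism hinges on the deliberate polynomial-degree mismatch $Y_{h,0}=\mathscr{L}_{h,0}^{r+1}$ against $\mathbf{X}_{h,\tau0}=\mathscr{V}_{h,\tau0}^r$, since without the inclusion $\mathscr{V}_{h,\tau0}^r\subset\mathscr{N}_{h,0}^{r+1}$ the identity $\boldsymbol{\Pi}_{\mathbf{curl},h}^{r+1}\boldsymbol{\xi}_h=\boldsymbol{\xi}_h$ would fail and one would be left with an uncontrolled consistency term $\boldsymbol{\xi}_h-\boldsymbol{\Pi}_{\mathbf{curl},h}^{r+1}\boldsymbol{\xi}_h$.
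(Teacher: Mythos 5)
Your proof is correct and follows essentially the same route as the paper: the same Helmholtz splitting $\boldsymbol{\xi}_h=\mathbf{w}_h+\nabla\psi_h$ with $\psi_h\in\mathrm{H}_0^1(\Omega)$, the compact embedding of $X_N(\Omega)$ to extract a strongly convergent subsequence of $\{\mathbf{w}_h\}$, and the commutative diagram of Lemma~\ref{lem:Commutative-Diagram} together with the inclusion $\mathbf{X}_{h,\tau0}+\nabla Y_{h,0}\subset\mathscr{N}_{h,0}^{r+1}$ to reduce $\|\nabla\psi_h\|_{\mathbf{L}^2(\Omega)}$ to $\|\mathbf{w}_h-\boldsymbol{\Pi}_{\mathbf{curl},h}^{r+1}\mathbf{w}_h\|_{\mathbf{L}^2(\Omega)}$. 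The only (immaterial) difference is in the final limit, where the paper bounds this quantity by $Ch^{s}\|\mathbf{w}_h\|_{\mathbf{H}^s(\Omega)}$ via the Scott--Zhang estimate and the uniform $\mathbf{H}^s(\Omega)$ bound, while you instead invoke the pointwise $\mathbf{L}^2$-convergence and uniform boundedness of the smoothed projection applied to the fixed limit $\mathbf{w}_0$.
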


\begin{proof}
According to Definition \ref{def:discrete-compactness-property}, to prove the discrete compactness of 
the space sequence $\{\mathcal{H}_0^h\}_{h\in\wedge}$, 
it suffices to prove the following statement: If
$\{\boldsymbol{\xi}_{h_n}\}_{n=1}^\infty$ 
satisfies that there exists a constant $C_1>0$ such that for all $n\geq1$, $\boldsymbol{\xi}_{h_n}\in \mathcal{H}_0^{h_n}$ 
and $\|\boldsymbol{\xi}_{h_n}\|_{\mathbf{H}(\mathbf{curl};\Omega)}<C_1$, then $\{\boldsymbol{\xi}_{h_n}\}_{n=1}^\infty$ 
must have a subsequence which converges in $\mathbf{L}^2(\Omega)$.

For each $\boldsymbol{\xi}_{h_n}\in \mathcal{H}_0^{h_n}$, the following variational problem
\begin{align}
(\nabla p_n, \nabla q)=(\boldsymbol{\xi}_{h_n}, \nabla q), \quad\forall q\in \mathrm{H}_0^1(\Omega)
\end{align}
has a unique solution $p_n\in \mathrm{H}_0^1(\Omega)$ by the Lax-Milgram theorem.
Taking $q=p_n$ yields $\|\nabla p_n\|_{\mathbf{L}^2(\Omega)}\leq\|\boldsymbol{\xi}_{h_n}\|_{\mathbf{L}^2(\Omega)}$.
Let $\mathbf{w}_n=\boldsymbol{\xi}_{h_n}-\nabla p_n\in \mathbf{H}_0(\mathbf{curl};\Omega)$.
Then $(\mathbf{w}_n, \nabla q)=0$ holds for any $q\in \mathrm{H}_0^1(\Omega)$, which means that $\nabla\cdot \mathbf{w}_n=0$, 
$\mathbf{w}_n\in \mathcal{H}_0\subset X_N(\Omega)$.
Since $\Omega$ is a polyhedron,
$X_N(\Omega)$ is continuously embedded into $\mathbf{H}^s(\Omega)$ for some constant $s>1/2$ according to Lemma \ref{X_N-X_T-embedded-theorem}.
Consequently, there exists a constant $C_2$ such that
\begin{align}\label{eq:discrete-compactness-property-w_n_bounded}
\|\mathbf{w}_n\|_{\mathbf{H}^s(\Omega)}
&\leq C_2\|\mathbf{w}_n\|_{\mathbf{H}(\mathbf{curl};\Omega)}= C_2\|\boldsymbol{\xi}_{h_n}-\nabla p_n\|_{\mathbf{H}(\mathbf{curl};\Omega)} \nonumber\\
&\leq C_2(\|\nabla\times \boldsymbol{\xi}_{h_n}\|_{\mathbf{L}^2(\Omega)}+\|\boldsymbol{\xi}_{h_n}\|_{\mathbf{L}^2(\Omega)}+\|\nabla p_n\|_{\mathbf{L}^2(\Omega)})\nonumber\\
&\leq C_2(2(\|\nabla\times \boldsymbol{\xi}_{h_n}\|_{\mathbf{L}^2(\Omega)}^2+4\|\boldsymbol{\xi}_{h_n}\|_{\mathbf{L}^2(\Omega)}^2))^\frac{1}{2} \nonumber\\
&\leq 2\sqrt{2}C_2\|\boldsymbol{\xi}_{h_n}\|_{\mathbf{H}(\mathbf{curl};\Omega)} 
< 2\sqrt{2}C_1C_2,
\end{align}
which indicates that $\{\mathbf{w}_{n}\}_{n=1}^\infty$ is a bounded sequence in $\mathbf{H}^s(\Omega)$.
Furthermore, due to the compact embedding of $\mathbf{H}^s(\Omega)$ into $\mathbf{L}^2(\Omega)$,
there is a subsequence $\{\mathbf{w}_{n_k}\}_{k=1}^\infty$ and some $\mathbf{w}_0\in\mathbf{L}^2(\Omega)$ such that
$\mathbf{w}_{n_k}\to\mathbf{w}_0$ in $\mathbf{L}^2(\Omega)$ as $k\to\infty$.
By the triangle inequality
\begin{align}
\|\boldsymbol{\xi}_{h_{n_k}}-\mathbf{w}_0\|_{\mathbf{L}^2(\Omega)}
\leq\|\boldsymbol{\xi}_{h_{n_k}}-\mathbf{w}_{n_k}\|_{\mathbf{L}^2(\Omega)}
+\|\mathbf{w}_{n_k}-\mathbf{w}_0\|_{\mathbf{L}^2(\Omega)},
\end{align}
to prove the convergence of $\{\boldsymbol{\xi}_{h_{n_k}}\}_{k=1}^\infty$ to $\mathbf{w}_0$ in $\mathbf{L}^2(\Omega)$, it remains to show that
\begin{align}
\label{thm:discrete-compactness-property-xi_n_k-w_n_k-1}
\|\boldsymbol{\xi}_{h_{n_k}}-\mathbf{w}_{n_k}\|_{\mathbf{L}^2(\Omega)}\to 0\quad\mathrm{as}\quad k \to \infty.
\end{align}

Consider the smoothed projection interpolation operators $\Pi_{\mathbf{grad}, h_{n_k}}^{r+1}$ and $\boldsymbol\Pi_{\mathbf{curl}, h_{n_k}}^{r+1}$ given in Lemma \ref{lem:Commutative-Diagram}.
Since $\boldsymbol{\xi}_{h_{n_k}}\in \mathcal{H}_0^{h_{n_k}}\subset \mathscr{N}_{h_{n_k}, 0}^{r+1}$, it is straightforward to deduce that $\boldsymbol\Pi_{\mathbf{curl}, h_{n_k}}^{r+1}\boldsymbol{\xi}_{h_{n_k}}=\boldsymbol{\xi}_{h_{n_k}}$.
Furthermore, $p_n\in \mathrm{H}_0^1(\Omega)$ yields
\begin{align}
\label{eq:Commutative-Diagram-p_n_k}
\boldsymbol\Pi_{\mathbf{curl}, h_{n_k}}^{r+1}\nabla p_{n_k}=\nabla(\Pi_{\mathbf{grad}, h_{n_k}}^{r+1}p_{n_k}),
\end{align}
where $\Pi_{\mathbf{grad}, h_{n_k}}^{r+1}p_{n_k}\in \mathscr{L}_{h_{n_k}, 0}^{r+1}=Y_{h_{n_k}, 0}$ and $\boldsymbol\Pi_{\mathbf{curl}, h_{n_k}}^{r+1}\nabla p_{n_k}\in \mathscr{N}_{h_{n_k}, 0}^{r+1}$.
Combining \eqref{eq:Commutative-Diagram-p_n_k} with $\boldsymbol{\xi}_{h_{n_k}}\in \mathcal{H}_0^{h_{n_k}}$ and $\nabla\cdot \mathbf{w}_{n_k}=0$, it follows that
\begin{align}
&\|\mathbf{w}_{n_k}-\boldsymbol{\xi}_{h_{n_k}}\|_{\mathbf{L}^2(\Omega)}^2=(\mathbf{w}_{n_k}-\boldsymbol{\xi}_{h_{n_k}}, \mathbf{w}_{n_k}-\boldsymbol{\xi}_{h_{n_k}}) \nonumber\\
=&(\mathbf{w}_{n_k}-\boldsymbol{\xi}_{h_{n_k}}, \mathbf{w}_{n_k}-\boldsymbol\Pi_{\mathbf{curl}, h_{n_k}}^{r+1}\mathbf{w}_{n_k})-(\mathbf{w}_{n_k}-\boldsymbol{\xi}_{h_{n_k}}, \boldsymbol\Pi_{\mathbf{curl}, h_{n_k}}^{r+1}\boldsymbol{\xi}_{h_{n_k}}-\boldsymbol\Pi_{\mathbf{curl}, h_{n_k}}^{r+1}\mathbf{w}_{n_k}) \nonumber\\
=&(\mathbf{w}_{n_k}-\boldsymbol{\xi}_{h_{n_k}}, \mathbf{w}_{n_k}-\boldsymbol\Pi_{\mathbf{curl}, h_{n_k}}^{r+1}\mathbf{w}_{n_k})-(\mathbf{w}_{n_k}-\boldsymbol{\xi}_{h_{n_k}}, \boldsymbol\Pi_{\mathbf{curl}, h_{n_k}}^{r+1}\nabla p_{n_k}) \nonumber\\
=&(\mathbf{w}_{n_k}-\boldsymbol{\xi}_{h_{n_k}}, \mathbf{w}_{n_k}-\boldsymbol\Pi_{\mathbf{curl}, h_{n_k}}^{r+1}\mathbf{w}_{n_k})-(\mathbf{w}_{n_k}-\boldsymbol{\xi}_{h_{n_k}}, \nabla(\Pi_{\mathbf{grad}, h_{n_k}}^{r+1}p_{n_k})) \nonumber\\
=&(\mathbf{w}_{n_k}-\boldsymbol{\xi}_{h_{n_k}}, \mathbf{w}_{n_k}-\boldsymbol\Pi_{\mathbf{curl}, h_{n_k}}^{r+1}\mathbf{w}_{n_k}) \nonumber\\
\leq& \|\mathbf{w}_{n_k}-\boldsymbol{\xi}_{h_{n_k}}\|_{\mathbf{L}^2(\Omega)}\|\mathbf{w}_{n_k}-\boldsymbol\Pi_{\mathbf{curl}, h_{n_k}}^{r+1}\mathbf{w}_{n_k}\|_{\mathbf{L}^2(\Omega)},
\end{align}
which implies
\begin{align}
\label{thm:discrete-compactness-property-xi_n_k-w_n_k-2}
\|\mathbf{w}_{n_k}-\boldsymbol{\xi}_{h_{n_k}}\|_{\mathbf{L}^2(\Omega)}
\leq \|\mathbf{w}_{n_k}-\boldsymbol\Pi_{\mathbf{curl}, h_{n_k}}^{r+1}\mathbf{w}_{n_k}\|_{\mathbf{L}^2(\Omega)}.
\end{align}
According to Lemma \ref{lem:Commutative-Diagram}, $\boldsymbol\Pi_{\mathbf{curl}, h_{n_k}}^{r+1}$ is uniformly bounded in $\mathcal{L}(\mathbf{L}^2(\Omega))$.
Thus from the projection property and error estimate \eqref{eq:vector-Scott-Zhang-estimate}, there exists a constant $C_3$ such that
\begin{align}
\label{thm:discrete-compactness-property-xi_n_k-w_n_k-3}
\|\mathbf{w}_{n_k}-\boldsymbol\Pi_{\mathbf{curl}, h_{n_k}}^{r+1}\mathbf{w}_{n_k}\|_{\mathbf{L}^2(\Omega)}
&=\inf_{\mathbf{v}_{h_{n_k}}\in \mathscr{N}_{h_{n_k}, 0}^{r+1}}\|\mathbf{w}_{n_k}-\mathbf{v}_{h_{n_k}}-\boldsymbol\Pi_{\mathbf{curl}, h_{n_k}}^{r+1}(\mathbf{w}_{n_k}-\mathbf{v}_{h_{n_k}})\|_{\mathbf{L}^2(\Omega)} \nonumber\\ 
&\leq C_3\inf_{\mathbf{v}_{h_{n_k}}\in \mathscr{N}_{h_{n_k}, 0}^{r+1}}\|\mathbf{w}_{n_k}-\mathbf{v}_{h_{n_k}}\|_{\mathbf{L}^2(\Omega)} \nonumber\\ 
&\leq C_3\|\mathbf{w}_{n_k}-\boldsymbol\Pi_{h_{n_k}}^r\mathbf{w}_{n_k}\|_{\mathbf{L}^2(\Omega)}
\leq C_3h_{n_k}^{s}\|\mathbf{w}_{n_k}\|_{\mathbf{H}^s(\Omega)},
\end{align}
where the vector Scott-Zhang interpolation $\boldsymbol\Pi_{h_{n_k}}^r\mathbf{w}_{n_k}\in \mathscr{V}_{h_{n_k}, \tau0}^r\subset \mathscr{N}_{h_{n_k},0}^{r+1}$.
Substituting
\eqref{eq:discrete-compactness-property-w_n_bounded} into \eqref{thm:discrete-compactness-property-xi_n_k-w_n_k-3} and combining it with \eqref{thm:discrete-compactness-property-xi_n_k-w_n_k-2} gives \eqref{thm:discrete-compactness-property-xi_n_k-w_n_k-1}.
Thus, we prove that there is a subsequence $\{\boldsymbol{\xi}_{h_{n_k}}\}_{k=1}^\infty$ of $\{\boldsymbol{\xi}_{h_n}\}_{n=1}^\infty$ converges to $\mathbf{w}_0$ in $\mathbf{L}^2(\Omega)$.
\end{proof}

\begin{remark}
The specific choice $Y_{h, 0}=\mathscr{L}_{h, 0}^{r+1}$, which is one order higher than $\mathbf{X}_{h, \tau0}=\mathscr{V}_{h, \tau0}^r$, plays an essential role in the proofs of Theorem \ref{thm:mix-discrete-variational-form_nonzero-eigenvalue} (elimination of spurious modes) and Theorem \ref{thm:discrete-compactness-property} (discrete compactness).
The necessity of this design is fundamentally rooted in the inclusion relationship $\mathscr{V}_{h, \tau0}^r\subset \mathscr{N}_{h, 0}^{r+1}$ and the discrete exact sequence property 
in Lemma \ref{discrete-exactline}.

\end{remark}

\subsection{Pointwise convergence}
To establish the spectral convergence of the operator sequence $\{\mathcal{R}_{h_n}\}_{n=1}^\infty$, it is also necessary to demonstrate the pointwise convergence of $\{\mathcal{R}_{h_n}\}_{n=1}^\infty$ to the operator $\mathcal{R}$ on $\mathbf{L}^2(\Omega)$.
The following property which involves the approximation of functions in $\mathcal{H}_0$ by sequences of discrete functions in $\{\mathcal{H}_0^h\}_{h\in\wedge}$
will be utilized in the subsequent proof.


\begin{lem}
\label{lem:H_0^h-approximate-H_0}
For any given $\boldsymbol{\eta}\in \mathcal{H}_0$, suppose $\boldsymbol{\eta}\in \mathbf{H}_0^t(\mathbf{curl};\Omega)$ with $t>1/2$, then there is a constant $C_t>0$ independent of $\boldsymbol{\eta}$, and there exists a sequence $\boldsymbol{\eta}_h\in \mathcal{H}_0^h$ such that
\begin{align}
\label{maxwell-eigenvector-discrete-approximate_Ht}
\|\boldsymbol{\eta}-\boldsymbol{\eta}_h\|_{\mathbf{H}(\mathbf{curl};\Omega)}
\leq C_t h^{\min\{t, r\}}\|\boldsymbol{\eta}\|_{\mathbf{H}^t(\mathbf{curl};\Omega)}, \quad\forall h\in\wedge.
\end{align}
If $t\leq1/2$, there also exists a sequence $\boldsymbol{\eta}_h\in \mathcal{H}_0^h$ such that
\begin{align}
\label{eq:H_0^h-approximate-H_0}
\|\boldsymbol{\eta}-\boldsymbol{\eta}_h\|_{\mathbf{H}(\mathbf{curl};\Omega)}\to0\quad\mathrm{as}\quad h\to0~\mathrm{in}\wedge.
\end{align}
\end{lem}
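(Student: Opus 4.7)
The plan is to construct $\boldsymbol{\eta}_h$ from the high order regular decomposition of $\boldsymbol{\eta}$ in three steps: interpolate the two smoother potentials with the Scott--Zhang operators to land in $\mathbf{X}_{h,\tau0}+\nabla Y_{h,0}$, correct by an $\mathbf{L}^2$-small gradient via Theorem~\ref{cor:X_h,t0-to-H_0^h} so that the result lies in $\mathcal{H}_0^h$, and reduce the low regularity case $t\leq 1/2$ to the high regularity case by density.

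Assume first $t>1/2$. Theorem~\ref{thm:Lipschitz-contractible-order_s+1-decomposion-zeroboundary} (with $s=t$) writes $\boldsymbol{\eta}=\mathbf{u}+\nabla p$, where $\mathbf{u}\in\mathbf{H}^{t+1}(\Omega)\cap\mathbf{H}_0(\mathbf{curl};\Omega)$, $p\in\mathrm{H}^{t+1}(\Omega)\cap\mathrm{H}_0^1(\Omega)$, and both are controlled by $C\|\boldsymbol{\eta}\|_{\mathbf{H}^t(\mathbf{curl};\Omega)}$. Since $t+1>1/2$, the operators $\boldsymbol{\Pi}_h^r$ and $\Pi_h^{r+1}$ are well defined on these spaces, so set $\tilde{\boldsymbol{\eta}}_h:=\boldsymbol{\Pi}_h^r\mathbf{u}+\nabla(\Pi_h^{r+1}p)\in\mathbf{X}_{h,\tau0}+\nabla Y_{h,0}$. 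The Scott--Zhang estimates \eqref{eq:scalar-Scott-Zhang-estimate}--\eqref{eq:vector-Scott-Zhang-estimate}, with the regularity index truncated at $r+1$ and $r+2$ respectively, give
\begin{align*}
\|\mathbf{u}-\boldsymbol{\Pi}_h^r\mathbf{u}\|_{\mathbf{H}(\mathbf{curl};\Omega)} &\leq Ch^{\min\{t,r\}}\|\mathbf{u}\|_{\mathbf{H}^{t+1}(\Omega)}, \\
\|\nabla(p-\Pi_h^{r+1}p)\|_{\mathbf{L}^2(\Omega)} &\leq Ch^{\min\{t,r+1\}}\|p\|_{\mathrm{H}^{t+1}(\Omega)}.
\end{align*}
Combining these with $\nabla\times\tilde{\boldsymbol{\eta}}_h=\nabla\times(\boldsymbol{\Pi}_h^r\mathbf{u})$ yields $\|\boldsymbol{\eta}-\tilde{\boldsymbol{\eta}}_h\|_{\mathbf{H}(\mathbf{curl};\Omega)}\leq Ch^{\min\{t,r\}}\|\boldsymbol{\eta}\|_{\mathbf{H}^t(\mathbf{curl};\Omega)}$. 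Since $\boldsymbol{\eta}\in\mathbf{H}(\mathrm{div}^0;\Omega)$, Theorem~\ref{cor:X_h,t0-to-H_0^h} furnishes a unique $r_h\in Y_{h,0}$ with $\boldsymbol{\eta}_h:=\tilde{\boldsymbol{\eta}}_h+\nabla r_h\in\mathcal{H}_0^h$ and $\|\nabla r_h\|_{\mathbf{L}^2(\Omega)}\leq\|\boldsymbol{\eta}-\tilde{\boldsymbol{\eta}}_h\|_{\mathbf{L}^2(\Omega)}$. Because $\nabla\times(\nabla r_h)=\mathbf{0}$, the curl error is unaffected and the $\mathbf{L}^2$ error at most doubles, proving \eqref{maxwell-eigenvector-discrete-approximate_Ht}.

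For $t\leq 1/2$, I would argue by density: $\mathcal{H}_0\cap\mathbf{H}^{t^*}(\mathbf{curl};\Omega)$ is dense in $\mathcal{H}_0$ with respect to the $\mathbf{H}(\mathbf{curl};\Omega)$ norm for some fixed $t^*>1/2$, via boundary-respecting mollification applied to the smoother representative of the $s=0$ decomposition \eqref{eq:H_0curl-order-0-decomposion}. Given any $\epsilon>0$, choose such an approximant $\boldsymbol{\eta}^\epsilon$ with $\|\boldsymbol{\eta}-\boldsymbol{\eta}^\epsilon\|_{\mathbf{H}(\mathbf{curl};\Omega)}<\epsilon/2$ and apply the first part to obtain $\{\boldsymbol{\eta}^\epsilon_h\}_{h\in\wedge}\subset\mathcal{H}_0^h$ with $\|\boldsymbol{\eta}^\epsilon-\boldsymbol{\eta}^\epsilon_h\|_{\mathbf{H}(\mathbf{curl};\Omega)}\to 0$; a standard diagonal argument then produces the desired sequence. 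The main technical obstacle is the gradient correction step: without the divergence-free hypothesis on $\boldsymbol{\eta}$ and the bound supplied by Theorem~\ref{cor:X_h,t0-to-H_0^h}, the correction $\nabla r_h$ could not be controlled in $\mathbf{L}^2$, so $\tilde{\boldsymbol{\eta}}_h$ would either remain outside $\mathcal{H}_0^h$ or fail to approximate $\boldsymbol{\eta}$ at the expected rate.
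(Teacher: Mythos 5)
Your proposal is correct and follows essentially the same route as the paper: decompose $\boldsymbol{\eta}=\mathbf{u}+\nabla p$ via Theorem \ref{thm:Lipschitz-contractible-order_s+1-decomposion-zeroboundary}, interpolate the potentials with $\boldsymbol\Pi_h^r$ and $\Pi_h^{r+1}$, push the result into $\mathcal{H}_0^h$ by the gradient correction of Theorem \ref{cor:X_h,t0-to-H_0^h}, and handle $t\leq 1/2$ by mollifying the potentials of the $s=0$ decomposition. The only cosmetic difference is that you control the correction $\nabla r_h$ by the explicit bound \eqref{eq:r_h-form-control} plus a triangle inequality, while the paper uses the orthogonality of $\boldsymbol{\eta}-\boldsymbol{\eta}_h$ to $\nabla Y_{h,0}$ to bound $\|\boldsymbol{\eta}-\boldsymbol{\eta}_h\|_{\mathbf{H}(\mathbf{curl};\Omega)}$ directly by the interpolation error; both yield the same rate.
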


\begin{proof}

According to Theorem \ref{thm:Lipschitz-contractible-order_s+1-decomposion-zeroboundary}, there exist functions $\mathbf{u}=\mathsf{P}\boldsymbol{\eta}\in \mathbf{H}^{t+1}(\Omega)\cap \mathbf{H}_0(\mathbf{curl};\Omega)$ and $p=\mathsf{Q}\boldsymbol{\eta}\in \mathrm{H}^{t+1}(\Omega)\cap \mathrm{H}_0^1(\Omega)$ such that $\boldsymbol{\eta}=\mathbf{u}+\nabla p$. Additionally, there exist constants $C_{1, t}$ and $C_{2, t}$, both independent of $\boldsymbol{\eta}$, satisfying
\begin{align}
\|\mathbf{u}\|_{\mathbf{H}^{t+1}(\Omega)}\leq C_{1, t} \|\boldsymbol{\eta}\|_{\mathbf{H}^t(\mathbf{curl};\Omega)},  \ \ \ \ 
\|p\|_{\mathrm{H}^{t+1}(\Omega)}\leq C_{2, t} \|\boldsymbol{\eta}\|_{\mathbf{H}^t(\mathbf{curl};\Omega)}.
\end{align}
Consider the Scott-Zhang interpolation functions $\boldsymbol\Pi_h^r \mathbf{u}\in \mathscr{V}_{h, \tau0}^r=\mathbf{X}_{h, \tau0}$ and $\Pi_h^{r+1} p\in \mathscr{L}_{h, 0}^{r+1}=Y_{h, 0}$.
From the error estimates \eqref{eq:scalar-Scott-Zhang-estimate} and \eqref{eq:vector-Scott-Zhang-estimate}, there is a constant $C$ such that
\begin{align}\label{maxwell-eigenvector-discrete-approximate_SZestimate}
\|\mathbf{u} - \boldsymbol\Pi_h^r \mathbf{u}\|_{\mathbf{H}^1(\Omega)} &\leq Ch^{\min\{t, r\}} \|\mathbf{u}\|_{\mathbf{H}^{t+1}(\Omega)}\leq CC_{1, t}h^{\min\{t, r\}}\|\boldsymbol{\eta}\|_{\mathbf{H}^t(\mathbf{curl};\Omega)},  \nonumber\\
\|p - \Pi_h^{r+1} p\|_{\mathrm{H}^1(\Omega)} &\leq Ch^{\min\{t, r+1\}} \|p\|_{\mathrm{H}^{t+1}(\Omega)}\leq CC_{2, t}h^{\min\{t, r+1\}}\|\boldsymbol{\eta}\|_{\mathbf{H}^t(\mathbf{curl};\Omega)}.
\end{align}
By Theorem \ref{cor:X_h,t0-to-H_0^h}, there exists a unique $p_{h}\in Y_{h, 0}$ such that $\boldsymbol{\eta}_h=\boldsymbol\Pi_{h}^r \mathbf{u} + \nabla \Pi_{h}^{r+1} p+\nabla p_{h}\in \mathcal{H}_0^{{h}}$, and combining with $\boldsymbol{\eta}=\mathbf{u}+\nabla p\in \mathbf{H}(\mathrm{div}^0;\Omega)$ yields
\begin{align}
\label{eq:space_appro-nabla p_h—estimate}
&(\boldsymbol{\eta}-\boldsymbol{\eta}_h,\boldsymbol{\eta}-\boldsymbol{\eta}_h)_{\mathbf{H}(\mathbf{curl};\Omega)}
=(\boldsymbol{\eta}-\boldsymbol{\eta}_h,\mathbf{u}+\nabla p-\boldsymbol\Pi_{h}^r \mathbf{u} - \nabla \Pi_{h}^{r+1} p)_{\mathbf{H}(\mathbf{curl};\Omega)} \nonumber\\
\leq& \|\boldsymbol{\eta}-\boldsymbol{\eta}_h\|_{\mathbf{H}(\mathbf{curl};\Omega)}
\|\mathbf{u}+\nabla p-\boldsymbol\Pi_{h}^r \mathbf{u} - \nabla \Pi_{h}^{r+1} p\|_{\mathbf{H}(\mathbf{curl};\Omega)}.
\end{align}
Substituting the interpolation estimate \eqref{maxwell-eigenvector-discrete-approximate_SZestimate} into \eqref{eq:space_appro-nabla p_h—estimate}, it follows that
\begin{align}
&\|\boldsymbol{\eta}-\boldsymbol{\eta}_h\|_{\mathbf{H}(\mathbf{curl};\Omega)} 
\leq \|\mathbf{u}+\nabla p-\boldsymbol\Pi_{h}^r \mathbf{u} - \nabla \Pi_{h}^{r+1} p\|_{\mathbf{H}(\mathbf{curl};\Omega)} \nonumber\\
\leq& \|\mathbf{u}-\boldsymbol\Pi_{h}^r \mathbf{u}\|_{\mathbf{H}(\mathbf{curl};\Omega)}+\|\nabla p-\nabla \Pi_{h}^{r+1} p\|_{\mathbf{L}^2(\Omega)} \nonumber\\
\leq& \|\mathbf{u}-\boldsymbol\Pi_{h}^r \mathbf{u}\|_{\mathbf{H}^1(\Omega)}+\|p-\Pi_{h}^{r+1} p\|_{\mathrm{H}^1(\Omega)} \nonumber\\
\leq&  C(C_{1, t}h^{\min\{t, r\}}+C_{2, t}h^{\min\{t, r+1\}})\|\boldsymbol{\eta}\|_{\mathbf{H}^t(\mathbf{curl};\Omega)} \nonumber\\
\leq&  C_th^{\min\{t, r\}}\|\boldsymbol{\eta}\|_{\mathbf{H}^t(\mathbf{curl};\Omega)}, 
\end{align}
where $C_t=C(C_{1, t}+C_{2, t})$ is a constant depending only on $t$ and $\Omega$.

For the case $t\leq1/2$, one only needs to split $\boldsymbol{\eta}$ as $\boldsymbol{\eta}=\mathbf{u}+\nabla p$ by the regular decomposition \eqref{order-1-decomposion}, where $\mathbf{u}\in \mathbf{H}_0^1(\Omega)$ and $p\in \mathrm{H}_0^1(\Omega)$, and then utilize the density of $\mathrm{C}_0^\infty(\Omega)$ in $\mathrm{H}_0^1(\Omega)$ to perform similar derivations as above for smooth functions sufficiently close to $\mathbf{u}$ and $p$, thereby obtaining a sequence $\boldsymbol{\eta}_h\in \mathcal{H}_0^h$ that approximates $\boldsymbol{\eta}$.
\end{proof}

\begin{thm}
\label{thm:Rh-Point convergence-R}
For any given $\mathbf{f}\in \mathbf{L}^2(\Omega)$, the norm convergence
\begin{align}
\|\mathcal{R}\mathbf{f}-\mathcal{R}_{h_n}\mathbf{f}\|_{\mathbf{H}(\mathbf{curl};\Omega)}\to0\quad\mathrm{as}\quad n\to\infty
\end{align}
holds. 
This norm convergence implies the pointwise convergence of the operator sequence $\{\mathcal{R}_{h_n}\}_{n=1}^\infty$ to the operator $\mathcal{R}$ on $\mathbf{L}^2(\Omega)$.

\end{thm}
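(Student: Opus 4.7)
The plan is to separate $\mathbf{f}$ into a divergence-free part and a gradient part via the $\mathbf{L}^2$-orthogonal Helmholtz decomposition, and to handle the two parts by different devices. Concretely, I would define $\phi\in\mathrm{H}_0^1(\Omega)$ as the unique solution of $(\nabla\phi,\nabla\psi) = (\mathbf{f},\nabla\psi)$ for all $\psi\in\mathrm{H}_0^1(\Omega)$, and set $\mathbf{f}_0 = \mathbf{f} - \nabla\phi \in \mathbf{H}(\mathrm{div}^0;\Omega)$. Because every $\boldsymbol{\eta}\in\mathcal{H}_0$ has $\nabla\cdot\boldsymbol{\eta}=0$ together with vanishing tangential trace, integration by parts gives $(\nabla\phi,\boldsymbol{\eta}) = 0$, so the continuous equation \eqref{eq:def-operater-R} collapses to $\mathcal{R}\mathbf{f}=\mathcal{R}\mathbf{f}_0$. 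Hence the error splits as $\mathcal{R}\mathbf{f}-\mathcal{R}_{h_n}\mathbf{f} = (\mathcal{R}\mathbf{f}_0 - \mathcal{R}_{h_n}\mathbf{f}_0) - \mathcal{R}_{h_n}(\nabla\phi)$, and it suffices to drive each piece to zero in $\mathbf{H}(\mathbf{curl};\Omega)$.

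For the solenoidal piece, since $\mathbf{f}_0 \in \mathbf{H}(\mathrm{div}^0;\Omega)$, Theorem \ref{thm:Rxi-Rhxi-orthogonal} gives $\mathbf{H}(\mathbf{curl};\Omega)$-orthogonality of $\mathcal{R}\mathbf{f}_0 - \mathcal{R}_{h_n}\mathbf{f}_0$ to $\mathbf{X}_{h_n,\tau 0}+\nabla Y_{h_n,0}$, which in particular contains $\mathcal{H}_0^{h_n}$. A standard Galerkin energy identity, subtracting $(\mathcal{R}\mathbf{f}_0 - \mathcal{R}_{h_n}\mathbf{f}_0,\mathcal{R}_{h_n}\mathbf{f}_0-\boldsymbol{\zeta}_{h_n})_{\mathbf{H}(\mathbf{curl};\Omega)}=0$, yields the quasi-best approximation bound
\begin{equation*}
\|\mathcal{R}\mathbf{f}_0 - \mathcal{R}_{h_n}\mathbf{f}_0\|_{\mathbf{H}(\mathbf{curl};\Omega)} \leq \inf_{\boldsymbol{\zeta}_{h_n}\in\mathcal{H}_0^{h_n}} \|\mathcal{R}\mathbf{f}_0 - \boldsymbol{\zeta}_{h_n}\|_{\mathbf{H}(\mathbf{curl};\Omega)}.
\end{equation*}
Since $\mathcal{R}\mathbf{f}_0 \in \mathcal{H}_0$ need carry no extra regularity beyond $\mathbf{H}_0(\mathbf{curl};\Omega)$, I would invoke the low-regularity branch \eqref{eq:H_0^h-approximate-H_0} of Lemma \ref{lem:H_0^h-approximate-H_0} to supply a sequence $\boldsymbol{\zeta}_{h_n}\in\mathcal{H}_0^{h_n}$ with $\|\mathcal{R}\mathbf{f}_0 - \boldsymbol{\zeta}_{h_n}\|_{\mathbf{H}(\mathbf{curl};\Omega)} \to 0$, closing this piece.

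For the gradient piece, I would test the defining equation \eqref{eq:def-operater-R_h} for $\mathcal{R}_{h_n}(\nabla\phi)$ with $\boldsymbol{\eta}_{h_n}=\mathcal{R}_{h_n}(\nabla\phi)\in\mathcal{H}_0^{h_n}$. The discrete orthogonality $(\nabla q_{h_n},\boldsymbol{\eta}_{h_n})=0$ for every $q_{h_n}\in Y_{h_n,0}$ lets me replace $\nabla\phi$ on the right-hand side by $\nabla(\phi - q_{h_n})$, and Cauchy--Schwarz followed by division by $\|\mathcal{R}_{h_n}(\nabla\phi)\|_{\mathbf{H}(\mathbf{curl};\Omega)}$ gives
\begin{equation*}
\|\mathcal{R}_{h_n}(\nabla\phi)\|_{\mathbf{H}(\mathbf{curl};\Omega)} \leq \inf_{q_{h_n}\in Y_{h_n,0}} \|\nabla(\phi - q_{h_n})\|_{\mathbf{L}^2(\Omega)}.
\end{equation*}
The natural choice $q_{h_n} = \Pi_{\mathbf{grad},h_n}^{r+1}\phi$ from Lemma \ref{lem:Commutative-Diagram} then reduces everything to showing $\nabla\Pi_{\mathbf{grad},h_n}^{r+1}\phi \to \nabla\phi$ in $\mathbf{L}^2(\Omega)$ for $\phi\in\mathrm{H}_0^1(\Omega)$.

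The main technical obstacle is precisely this last density step, because Lemma \ref{lem:Commutative-Diagram} only asserts convergence of $\Pi_{\mathbf{grad},h}^{r+1}$ in $\mathrm{L}^2(\Omega)$, and we need convergence of its gradient. I would bridge the gap by a standard $\varepsilon$-density argument: for arbitrary $\varepsilon>0$, pick $\tilde{\phi}\in\mathrm{C}_0^\infty(\Omega)$ with $\|\nabla(\phi-\tilde{\phi})\|_{\mathbf{L}^2(\Omega)} < \varepsilon$, use the uniform $\mathcal{L}(\mathrm{H}_0^1(\Omega))$ bound of $\Pi_{\mathbf{grad},h}^{r+1}$ to control $\|\nabla\Pi_{\mathbf{grad},h_n}^{r+1}(\phi-\tilde{\phi})\|_{\mathbf{L}^2(\Omega)} \lesssim \varepsilon$, and apply the Scott--Zhang interpolation estimate \eqref{eq:scalar-Scott-Zhang-estimate} to the smooth $\tilde{\phi}$ so that $\|\nabla(\tilde{\phi} - \Pi_{\mathbf{grad},h_n}^{r+1}\tilde{\phi})\|_{\mathbf{L}^2(\Omega)} \to 0$. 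A triangle inequality then delivers $\|\nabla(\phi-\Pi_{\mathbf{grad},h_n}^{r+1}\phi)\|_{\mathbf{L}^2(\Omega)} \to 0$, completing the proof.
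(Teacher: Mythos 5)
Your proof is correct, but it takes a genuinely different route from the paper's. The paper argues by soft compactness: it extracts a weakly convergent subsequence of $\{\mathcal{R}_{h_n}\mathbf{f}\}$ using the uniform bound \eqref{eq:Rhf-form-control}, identifies the weak limit as $\mathcal{R}\mathbf{f}$ by passing to the limit in the discrete equation against approximating test functions from Lemma \ref{lem:H_0^h-approximate-H_0}, upgrades this to convergence of the whole sequence by a contradiction argument, and finally converts weak to norm convergence via the energy identity $\|\mathcal{R}_{h_n}\mathbf{f}\|_{\mathbf{H}(\mathbf{curl};\Omega)}^2=(\mathbf{f},\mathcal{R}_{h_n}\mathbf{f})$. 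You instead split the datum by the Helmholtz decomposition $\mathbf{f}=\mathbf{f}_0+\nabla\phi$ (using that $\mathcal{R}$ annihilates gradients of $\mathrm{H}_0^1$ functions, which the paper itself notes after Theorem \ref{thm:R-semidefinite-selfadjoint-tight}), exploit the Galerkin orthogonality of Theorem \ref{thm:Rxi-Rhxi-orthogonal} to get a C\'ea-type best-approximation bound for the solenoidal part, and bound the spurious term $\mathcal{R}_{h_n}(\nabla\phi)$ directly by the distance of $\phi$ to $Y_{h_n,0}$ in $\mathrm{H}^1$. Your approach is more constructive and quantitative --- it avoids subsequence extraction entirely and would immediately yield convergence rates whenever $\mathcal{R}\mathbf{f}_0$ has extra regularity (which it does here, since $\nabla\times\mathcal{R}\mathbf{f}_0\in X_T(\Omega)$, so you could even invoke the rated branch \eqref{maxwell-eigenvector-discrete-approximate_Ht}) --- at the price of needing the orthogonality theorem and the Helmholtz splitting, whereas the paper's argument uses only boundedness and approximability. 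One small point to tighten: the Scott--Zhang estimate \eqref{eq:scalar-Scott-Zhang-estimate} applies to $\Pi_{h}^{r+1}$, not to the smoothed projection $\Pi_{\mathbf{grad},h}^{r+1}$, so in your final density step you should either insert the Scott--Zhang interpolant through the projection property and uniform $\mathcal{L}(\mathrm{H}_0^1(\Omega))$ bound of $\Pi_{\mathbf{grad},h}^{r+1}$ (exactly as the paper does in \eqref{thm:discrete-compactness-property-xi_n_k-w_n_k-3}), or simply dispense with $\Pi_{\mathbf{grad},h}^{r+1}$ and take $q_{h_n}$ to be the Scott--Zhang interpolant of $\phi$ directly, since your bound is already an infimum over all of $Y_{h_n,0}$.
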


\begin{proof}
For any $\mathbf{f}\in \mathbf{L}^2(\Omega)$, the estimate \eqref{eq:Rhf-form-control} gives
\begin{align}
\|\mathcal{R}_{h_n}\mathbf{f}\|_{\mathbf{H}(\mathbf{curl};\Omega)}\leq\|\mathbf{f}\|_{\mathbf{L}^2(\Omega)}, \quad\forall n\geq1,
\end{align}
Therefore, $\{\mathcal{R}_{h_n}\mathbf{f}\}_{n=1}^\infty$ is a bounded sequence in $\mathbf{H}_0(\mathbf{curl};\Omega)$.
Since $\mathbf{H}_0(\mathbf{curl};\Omega)$, as a Hilbert space, is a reflexive Banach space, by the Banach-Alaoglu theorem,
there is a subsequence $\{\mathcal{R}_{h_{n_k}}\mathbf{f}\}_{k=1}^\infty$ of $\{\mathcal{R}_{h_n}\mathbf{f}\}_{n=1}^\infty$ converges weakly to $\mathbf{g}\in \mathbf{H}_0(\mathbf{curl};\Omega)$.
Furthermore, for any $q\in\mathrm{H}_0^1(\Omega)$,
From the weak convergence of $\{\mathcal{R}_{h_{n_k}}\mathbf{f}\}_{k=1}^\infty$ and the approximability of the Lagrange finite element space $\mathscr{L}_{h_{n_k}, 0}^{r+1}=Y_{h_{n_k}, 0}$ to $\mathrm{H}_0^1(\Omega)$, there is a sequence $q_{h_{n_k}}\in Y_{h_{n_k}, 0}$ such that
\begin{align}
(\mathbf{g},\nabla q)
&=\lim_{k\to\infty}(\mathcal{R}_{h_{n_k}}\mathbf{f},\nabla q)
=\lim_{k\to\infty}(\mathcal{R}_{h_{n_k}}\mathbf{f},\nabla q-\nabla q_{h_{n_k}}) \nonumber\\
&\leq \|\mathbf{f}\|_{\mathbf{L}^2(\Omega)} \lim_{k\to\infty} \|q - q_{h_{n_k}}\|_{\mathrm{H}^1(\Omega)}
=0,
\end{align}
which implies that $\mathbf{g}\in \mathbf{H}_0(\mathbf{curl};\Omega)\cap \mathbf{H}(\mathrm{div}^0;\Omega)=\mathcal{H}_0$.
Consider any $\boldsymbol{\eta}\in \mathcal{H}_0$, by Lemma \ref{lem:H_0^h-approximate-H_0}, there exists a sequence $\boldsymbol{\eta}_{h_{n_k}}\in \mathcal{H}_0^{h_{n_k}}$ such that
$\boldsymbol{\eta}_{h_{n_k}}\to\boldsymbol{\eta}$ in $\mathbf{H}_0(\mathbf{curl};\Omega)$ as $k\to\infty$.
Notice that for each $\boldsymbol{\eta}_{h_{n_k}}$, $\mathcal{R}_{h_{n_k}}\mathbf{f}$ satisfies
\begin{align}
\label{eqthm:Rh-Point convergence-R-1}
(\nabla\times\mathcal{R}_{h_{n_k}}\mathbf{f}, \nabla\times\boldsymbol{\eta}_{h_{n_k}})+(\mathcal{R}_{h_{n_k}}\mathbf{f}, \boldsymbol{\eta}_{h_{n_k}})=(\mathcal{R}_{h_{n_k}}\mathbf{f}, \boldsymbol{\eta}_{h_{n_k}})_{\mathbf{H}(\mathbf{curl};\Omega)}=(\mathbf{f}, \boldsymbol{\eta}_{h_{n_k}}),
\end{align}
where the norm convergence of $\{\boldsymbol{\eta}_{h_{n_k}}\}_{k=1}^\infty$ in $\mathbf{H}(\mathbf{curl};\Omega)$ implies
\begin{align}
\label{eqthm:Rh-Point convergence-R-11}
(\mathbf{f}, \boldsymbol{\eta}_{h_{n_k}})\to(\mathbf{f}, \boldsymbol{\eta})\quad\mathrm{as}\quad k\to\infty,
\end{align}
and the weak convergence of $\{\mathcal{R}_{h_{n_k}}\mathbf{f}\}_{k=1}^\infty$ to $\mathbf{g}$ in $\mathbf{H}_0(\mathbf{curl};\Omega)$ implies
\begin{align}
\label{eqthm:Rh-Point convergence-R-12}
&|(\mathcal{R}_{h_{n_k}}\mathbf{f}, \boldsymbol{\eta}_{h_{n_k}})_{\mathbf{H}(\mathbf{curl};\Omega)}-(\mathbf{g}, \boldsymbol{\eta})_{\mathbf{H}(\mathbf{curl};\Omega)}| \nonumber\\
=& |(\mathcal{R}_{h_{n_k}}\mathbf{f}, \boldsymbol{\eta}_{h_{n_k}}-\boldsymbol{\eta})_{\mathbf{H}(\mathbf{curl};\Omega)}
+(\mathcal{R}_{h_{n_k}}\mathbf{f}-\mathbf{g}, \boldsymbol{\eta})_{\mathbf{H}(\mathbf{curl};\Omega)}| \nonumber\\
\leq& |(\mathcal{R}_{h_{n_k}}\mathbf{f}, \boldsymbol{\eta}_{h_{n_k}}-\boldsymbol{\eta})_{\mathbf{H}(\mathbf{curl};\Omega)}|
+|(\mathcal{R}_{h_{n_k}}\mathbf{f}-\mathbf{g}, \boldsymbol{\eta})_{\mathbf{H}(\mathbf{curl};\Omega)}| \nonumber\\
\leq& \|\mathcal{R}_{h_{n_k}}\mathbf{f}\|_{\mathbf{H}(\mathbf{curl;}\Omega)}
\|\boldsymbol{\eta}-\boldsymbol{\eta}_{h_{n_k}}\|_{\mathbf{H}(\mathbf{curl};\Omega)}
+|(\mathcal{R}_{h_{n_k}}\mathbf{f}-\mathbf{g}, \boldsymbol{\eta})_{\mathbf{H}(\mathbf{curl};\Omega)}| \nonumber\\
\leq& \|\mathbf{f}\|_{\mathbf{L}^2(\Omega)}\|\boldsymbol{\eta}-\boldsymbol{\eta}_{h_{n_k}}\|_{\mathbf{H}(\mathbf{curl};\Omega)} 
+|(\mathcal{R}_{h_{n_k}}\mathbf{f}-\mathbf{g}, \boldsymbol{\eta})_{\mathbf{H}(\mathbf{curl};\Omega)}| 
\to ~0\quad\mathrm{as}\quad k\to\infty.
\end{align}
Let $k\to\infty$ in \eqref{eqthm:Rh-Point convergence-R-1}, it follows from \eqref{eqthm:Rh-Point convergence-R-11} and \eqref{eqthm:Rh-Point convergence-R-12} that
\begin{align}
(\mathbf{g}, \boldsymbol{\eta})_{\mathbf{H}(\mathbf{curl};\Omega)}
=(\mathbf{f}, \boldsymbol{\eta})
=(\mathcal{R}\mathbf{f}, \boldsymbol{\eta})_{\mathbf{H}(\mathbf{curl};\Omega)}.
\end{align}
This holds for any $\boldsymbol{\eta}\in \mathcal{H}_0$, which means that
$\{\mathcal{R}_{h_{n_k}}\mathbf{f}\}_{k=1}^\infty$ converges weakly to $\mathbf{g}=\mathcal{R}\mathbf{f}$ in $\mathbf{H}_0(\mathbf{curl};\Omega)$.
This conclusion holds for every weakly convergent subsequence of $\{\mathcal{R}_{h_n}\mathbf{f}\}_{n=1}^\infty$, so $\{\mathcal{R}_{h_n}\mathbf{f}\}_{n=1}^\infty$ also converges weakly to $\mathcal{R}\mathbf{f}$ in $\mathbf{H}_0(\mathbf{curl};\Omega)$.
Otherwise, there would exist some $\mathbf{f}_0\in \mathbf{H}_0(\mathbf{curl};\Omega)$, a positive number $\varepsilon >0$, and a countable subset $\wedge_\varepsilon \subset\wedge$ such that
\begin{align}
\label{eq:Rh-weak convergence-R-ifnot}
|(\mathcal{R}_{h_n}\mathbf{f}-\mathcal{R}\mathbf{f}, \boldsymbol{\eta}_0)_{\mathbf{H}(\mathbf{curl};\Omega)}|>\varepsilon , \quad\forall n\in\wedge_\varepsilon .
\end{align}
However, $\{\mathcal{R}_{h_n}\mathbf{f}\}_{n\in\wedge_\varepsilon }$ is also a bounded sequence in $\mathcal{H}_0$ and must have a subsequence that converges weakly to $\mathcal{R}\mathbf{f}$ in $\mathcal{H}_0$, which contradicts \eqref{eq:Rh-weak convergence-R-ifnot}.

Taking $\boldsymbol{\eta}=\mathcal{R}\mathbf{f}\in \mathcal{H}_0$ and $\boldsymbol{\eta}_{h_n}=\mathcal{R}_{h_n}\mathbf{f}\in \mathcal{H}_0^{h_n}$ in \eqref{eq:def-operater-R} and \eqref{eq:def-operater-R_h}, respectively, yields
\begin{align}
\label{eq:thm:Rh-Point convergence-R_1}
\|\mathcal{R}\mathbf{f}\|_{\mathbf{H}(\mathbf{curl};\Omega)}^2=(\mathbf{f}, \mathcal{R}\mathbf{f}), \quad
\|\mathcal{R}_{h_n}\mathbf{f}\|_{\mathbf{H}(\mathbf{curl};\Omega)}^2=(\mathbf{f}, \mathcal{R}_{h_n}\mathbf{f}).
\end{align}
The weak convergence from $\{\mathcal{R}_{h_n}\mathbf{f}\}_{n=1}^\infty$ to $\mathcal{R}\mathbf{f}$ indicates
\begin{eqnarray}
&&\lim_{n\to\infty}(\mathbf{f}, \mathcal{R}_{h_n}\mathbf{f})=(\mathbf{f}, \mathcal{R}\mathbf{f}),\label{eq:thm:Rh-Point convergence-R_2}\\
&&\lim_{n\to\infty}(\mathcal{R}\mathbf{f}, \mathcal{R}_{h_n}\mathbf{f})_{\mathbf{H}(\mathbf{curl};\Omega)}
=(\mathcal{R}\mathbf{f}, \mathcal{R}\mathbf{f})_{\mathbf{H}(\mathbf{curl};\Omega)}.\label{eq:thm:Rh-Point convergence-R_3}
\end{eqnarray}
Combining \eqref{eq:thm:Rh-Point convergence-R_1}, \eqref{eq:thm:Rh-Point convergence-R_2} and \eqref{eq:thm:Rh-Point convergence-R_3}, it follows that
\begin{align}
&\lim_{n\to\infty}\|\mathcal{R}_{h_n}\mathbf{f}-\mathcal{R}\mathbf{f}\|_{\mathbf{H}(\mathbf{curl};\Omega)}^2 
=\lim_{n\to\infty}(\mathcal{R}_{h_n}\mathbf{f}-\mathcal{R}\mathbf{f}, \mathcal{R}_{h_n}\mathbf{f}
-\mathcal{R}\mathbf{f}) _{\mathbf{H}(\mathbf{curl};\Omega)} \nonumber\\
=& \|\mathcal{R}\mathbf{f}\|_{\mathbf{H}(\mathbf{curl};\Omega)}^2 
+ \lim_{n\to\infty}\|\mathcal{R}_{h_n}\mathbf{f}\|_{\mathbf{H}(\mathbf{curl};\Omega)}^2 
-2\lim_{n\to\infty}(\mathcal{R}\mathbf{f}, \mathcal{R}_{h_n}\mathbf{f})_{\mathbf{H}(\mathbf{curl};\Omega)} \nonumber\\
=& 2\|\mathcal{R}\mathbf{f}\|_{\mathbf{H}(\mathbf{curl};\Omega)}^2 
-2(\mathcal{R}\mathbf{f}, \mathcal{R}\mathbf{f})_{\mathbf{H}(\mathbf{curl};\Omega)}=0,
\end{align}
which proves that $\{\mathcal{R}_{h_n}\mathbf{f}\}_{n=1}^\infty$ converges to $\mathcal{R}\mathbf{f}$ 
in the norm of $\mathbf{H}(\mathbf{curl};\Omega)$.
\end{proof}

\section{Error estimates for eigenpair approximations}\label{Section_Error_Estimates}
In Section \ref{Section_Operator}, we have respectively proven via Theorems \ref{thm:discrete-compactness-property} and \ref{thm:Rh-Point convergence-R} that the sequence of operators $\{\mathcal{R}_{h_n}\}_{n=1}^\infty$ is collectively compact and converges pointwise to the compact operator $\mathcal{R}$. Combined with the self-adjointness of each operator, we conclude that the discrete positive eigenvalues and corresponding eigenfunctions of $\{\mathcal{R}_{h_n}\}_{n=1}^\infty$ will approximate the positive eigenvalues and corresponding eigenfunctions of the operator $\mathcal{R}$.
In this section, we will analyze the convergence rate of the spectrum of the operator 
sequence $\{\mathcal{R}_{h_n}\}_{n=1}^\infty$ by utilizing Lemmas \ref{selfadjoint-operator-eigenvalue-order} and \ref{operator-eigenvector-order}.

\begin{thm}\label{thm:R-R_h-multi-eigenvalue-eigenvector-approximate_order}
Let $\Omega$ be a bounded contractible Lipschitz polyhedron in $\mathbb{R}^3$, 
and $\mu$ be an eigenvalue of the operator $\mathcal{R}$ with multiplicity $m$. 
Then for sufficiently small $h$, there exists an open interval centered at $\mu$ 
that contains exactly $m$ eigenvalues $\mu_{h, j},j=1, \cdots, m$ of the operator $\mathcal{R}_h$.
Let $\{\boldsymbol{\xi}_1, \ldots, \boldsymbol{\xi}_m\}$ be an $\mathbf{L}^2(\Omega)$-orthonormal basis of 
the eigenvector space $\ker(\mu-\mathcal{R})$ of $\mathcal{R}$ corresponding to $\mu$. 
Suppose each $\boldsymbol{\xi}_j\in \mathcal{H}_0$ satisfies $\boldsymbol{\xi}_j
\in \mathbf{H}_0^t(\mathbf{curl};\Omega)$ with some $t>1/2$, then there is 
a constant $C$ depending on $t$ and $\Omega$ such that
\begin{align}
\label{eq:R-R_h-multi-eigenvalue-approximate_order}
\left|\mu-\mu_{h, j}\right|
\leq Cm(m+1)\mu^2h^{2\min\{t, r\}}\max_{1\leq j\leq m}
\|\boldsymbol{\xi}_j\|_{\mathbf{H}^t(\mathbf{curl};\Omega)}^2, \quad\forall j=1, \cdots, m,
\end{align}
where the constant $C$ depends on 
the real number $t$ and the polyhedron $\Omega$.
\end{thm}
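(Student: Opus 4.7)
The plan is to invoke Lemma \ref{selfadjoint-operator-eigenvalue-order} with $H=\mathbf{L}^2(\Omega)$, $T=\mathcal{R}$, and $T_n=\mathcal{R}_{h_n}$. Collective compactness and pointwise convergence of $\{\mathcal{R}_{h_n}\}$ have already been verified in Theorems \ref{thm:discrete-compactness-property-to-collectively-compact}, \ref{thm:discrete-compactness-property} and \ref{thm:Rh-Point convergence-R}, so the existence of exactly $m$ discrete eigenvalues $\mu_{h,j}$ clustering near $\mu$ for small $h$ is automatic. What remains is to bound the two ingredients in that lemma, namely $\sum_{i,j=1}^{m}|((\mathcal{R}-\mathcal{R}_h)\boldsymbol{\xi}_i,\boldsymbol{\xi}_j)|$ and $\|(\mathcal{R}-\mathcal{R}_h)|_{\mathrm{Im}(E)}\|^{2}$.

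My first step is a C\'ea-type estimate for $e_i:=\mathcal{R}\boldsymbol{\xi}_i-\mathcal{R}_h\boldsymbol{\xi}_i$. Since each eigenvector $\boldsymbol{\xi}_i$ lies in $\mathcal{H}_0\subset\mathbf{H}(\mathrm{div}^0;\Omega)$, Theorem \ref{thm:Rxi-Rhxi-orthogonal} asserts that $(e_i,\boldsymbol{\eta}_h)_{\mathbf{H}(\mathbf{curl};\Omega)}=0$ for every $\boldsymbol{\eta}_h\in\mathbf{X}_{h,\tau 0}+\nabla Y_{h,0}$, from which the standard subtract-and-Cauchy--Schwarz argument yields
\[
\|e_i\|_{\mathbf{H}(\mathbf{curl};\Omega)} \leq \inf_{\boldsymbol{\eta}_h\in\mathcal{H}_0^h}\|\mathcal{R}\boldsymbol{\xi}_i-\boldsymbol{\eta}_h\|_{\mathbf{H}(\mathbf{curl};\Omega)}.
\]
Because $\mathcal{R}\boldsymbol{\xi}_i=\mu\boldsymbol{\xi}_i$ lies in $\mathbf{H}_0^t(\mathbf{curl};\Omega)$, Lemma \ref{lem:H_0^h-approximate-H_0} then delivers $\|e_i\|_{\mathbf{H}(\mathbf{curl};\Omega)}\leq C_t\mu h^{\min\{t,r\}}\|\boldsymbol{\xi}_i\|_{\mathbf{H}^t(\mathbf{curl};\Omega)}$. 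Expanding an arbitrary unit vector in $\mathrm{Im}(E)$ in the $\mathbf{L}^2$-orthonormal basis and applying Cauchy--Schwarz immediately upgrades this bound to $\|(\mathcal{R}-\mathcal{R}_h)|_{\mathrm{Im}(E)}\|^{2}\leq C\,m\,\mu^{2}h^{2\min\{t,r\}}\max_k\|\boldsymbol{\xi}_k\|_{\mathbf{H}^t(\mathbf{curl};\Omega)}^{2}$, which accounts for the $+m$ portion of the $m(m+1)$ factor in \eqref{eq:R-R_h-multi-eigenvalue-approximate_order}.

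The principal difficulty --- and where self-adjointness must enter --- is upgrading each diagonal term $((\mathcal{R}-\mathcal{R}_h)\boldsymbol{\xi}_i,\boldsymbol{\xi}_j)$ from first- to second-order in $h$. I intend to establish the key identity
\[
((\mathcal{R}-\mathcal{R}_h)\boldsymbol{\xi}_i,\boldsymbol{\xi}_j)_{\mathbf{L}^2(\Omega)} \;=\; (e_i,e_j)_{\mathbf{H}(\mathbf{curl};\Omega)},
\]
by expanding the right-hand side and systematically exploiting Theorem \ref{thm:Rxi-Rhxi-orthogonal} together with the defining equations \eqref{eq:def-operater-R} and \eqref{eq:def-operater-R_h}. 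Concretely, the term $(\mathcal{R}\boldsymbol{\xi}_i,\mathcal{R}\boldsymbol{\xi}_j)_{\mathbf{H}(\mathbf{curl};\Omega)}$ equals $\mu\delta_{ij}$ by testing \eqref{eq:def-operater-R} with $\mathcal{R}\boldsymbol{\xi}_j\in\mathcal{H}_0$; both cross terms collapse onto $(\mathcal{R}_h\boldsymbol{\xi}_i,\mathcal{R}_h\boldsymbol{\xi}_j)_{\mathbf{H}(\mathbf{curl};\Omega)}$ because the orthogonality of $e_i$ (resp.\ $e_j$) to $\mathcal{H}_0^h$ annihilates the difference; and this last quantity equals $(\mathcal{R}_h\boldsymbol{\xi}_i,\boldsymbol{\xi}_j)_{\mathbf{L}^2(\Omega)}$ after testing \eqref{eq:def-operater-R_h} with $\mathcal{R}_h\boldsymbol{\xi}_j\in\mathcal{H}_0^h$ and invoking the self-adjointness of $\mathcal{R}_h$. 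Assembling these pieces, one obtains $(e_i,e_j)_{\mathbf{H}(\mathbf{curl};\Omega)}=\mu\delta_{ij}-(\mathcal{R}_h\boldsymbol{\xi}_i,\boldsymbol{\xi}_j)_{\mathbf{L}^2(\Omega)}=((\mathcal{R}-\mathcal{R}_h)\boldsymbol{\xi}_i,\boldsymbol{\xi}_j)_{\mathbf{L}^2(\Omega)}$. Once this identity is in hand, Cauchy--Schwarz combined with the C\'ea bound of the previous step gives $|((\mathcal{R}-\mathcal{R}_h)\boldsymbol{\xi}_i,\boldsymbol{\xi}_j)|\leq C\mu^{2}h^{2\min\{t,r\}}\max_k\|\boldsymbol{\xi}_k\|_{\mathbf{H}^t(\mathbf{curl};\Omega)}^{2}$; summing over the $m^{2}$ pairs, combining with the restricted-norm estimate, and substituting into Lemma \ref{selfadjoint-operator-eigenvalue-order} produces the desired bound \eqref{eq:R-R_h-multi-eigenvalue-approximate_order}.
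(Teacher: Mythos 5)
Your proposal is correct and follows essentially the same route as the paper's proof: the quasi-optimality of $\mathcal{R}_h\boldsymbol{\xi}_i$ via the orthogonality of Theorem \ref{thm:Rxi-Rhxi-orthogonal} combined with Lemma \ref{lem:H_0^h-approximate-H_0}, the identity $((\mathcal{R}-\mathcal{R}_h)\boldsymbol{\xi}_i,\boldsymbol{\xi}_j)=(\mathcal{R}\boldsymbol{\xi}_i-\mathcal{R}_h\boldsymbol{\xi}_i,\mathcal{R}\boldsymbol{\xi}_j-\mathcal{R}_h\boldsymbol{\xi}_j)_{\mathbf{H}(\mathbf{curl};\Omega)}$ obtained by testing the defining equations and cancelling cross terms, and the basis expansion for the restricted operator norm are exactly the ingredients used in the paper. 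The only cosmetic difference is that you state the first step as a C\'ea best-approximation bound over $\mathcal{H}_0^h$ while the paper plugs in the explicit test function $\mu\boldsymbol{\xi}_{j,h}-\mathcal{R}_h\boldsymbol{\xi}_j$; these are equivalent.
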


\begin{proof}
From Theorems \ref{thm:discrete-compactness-property} and \ref{thm:Rh-Point convergence-R},  
the sequence of adjoint operators $\{\mathcal{R}_{h_n}\}_{n=1}^\infty$ is collectively compact and converges 
pointwise to the adjoint compact operator $\mathcal{R}$.  
Then, for sufficiently small $h$, there exists an open interval 
centered at $\mu$ that contains exactly $m$ eigenvalues 
$\mu_{h, j},j=1, \cdots, m$ of the operator $\mathcal{R}_h$.

By Lemma \ref{selfadjoint-operator-eigenvalue-order}, there exists a constant $C_1$ such that for 
each $\mu_{h, j}$, $1\leq j\leq m$,
\begin{align}\label{multi-eigenvalue-estimate}
\left|\mu-\mu_{h, j}\right|\leq C_1
\left\{\sum\limits_{i, j=1}^m \left|\left((\mathcal{R}-\mathcal{R}_h)\boldsymbol{\xi}_i, \boldsymbol{\xi}_j\right)\right| 
+ \left\|(\mathcal{R}-\mathcal{R}_h)|_{\ker(\mu-\mathcal{R})}\right\|^2 \right\}, 
\end{align}
where $\ker(\mu-\mathcal{R})$ is the space spanned by the eigenfunctions 
$\boldsymbol{\xi}_1, \ldots, \boldsymbol{\xi}_m$.
For each $\boldsymbol{\xi}_j\in \mathcal{H}_0, 1\leq j\leq m$, Theorem \ref{thm:Rxi-Rhxi-orthogonal} implies that
\begin{align}
\label{eq:eigenvector-Rxi_j-Rhxi_j-orthogonal}
(\mathcal{R}\boldsymbol{\xi}_j-\mathcal{R}_{h}\boldsymbol{\xi}_j, \mathbf{v}_h
+\nabla q_h)_{\mathbf{H}(\mathbf{curl};\Omega)}=0, 
\quad\forall \mathbf{v}_h\in \mathbf{X}_{h, \tau0},\ \ 
\forall  q_{h}\in Y_{h, 0}.
\end{align}
Further, according to Lemma \ref{lem:H_0^h-approximate-H_0}, there exist a constant $C_{2, j}$ and a discrete function sequence $\boldsymbol{\xi}_{j,h}\in \mathcal{H}_0^h$ satisfying
\begin{align}
\label{eq:eigenvector-xi_j-xi_jh-curldistance}
\|\boldsymbol{\xi}_j-\boldsymbol{\xi}_{j,h}\|_{\mathbf{H}(\mathbf{curl};\Omega)}\leq C_{2, j}h^{\min\{t, r\}}\|\boldsymbol{\xi}_j\|_{\mathbf{H}^t(\mathbf{curl};\Omega)}.
\end{align}
Setting $\mathbf{v}_h+\nabla q_h=\mu\boldsymbol{\xi}_{j,h}-\mathcal{R}_{h}\boldsymbol{\xi}_j\in \mathcal{H}_0^h$ in
\eqref{eq:eigenvector-Rxi_j-Rhxi_j-orthogonal}, it follows that
\begin{align}
\label{eq:eigenvector-Rxi_j-Rhxi_j-distance}
\|\mathcal{R}\boldsymbol{\xi}_j-\mathcal{R}_{h}\boldsymbol{\xi}_j\|_{\mathbf{H}(\mathbf{curl};\Omega)}^2
=&(\mathcal{R}\boldsymbol{\xi}_j-\mathcal{R}_{h}\boldsymbol{\xi}_j, \mathcal{R}\boldsymbol{\xi}_j-\mathcal{R}_{h}\boldsymbol{\xi}_j)_{\mathbf{H}(\mathbf{curl};\Omega)} \nonumber\\
=&(\mathcal{R}\boldsymbol{\xi}_j-\mathcal{R}_{h}\boldsymbol{\xi}_j, \mathcal{R}\boldsymbol{\xi}_j
-\mathcal{R}_{h}\boldsymbol{\xi}_j-(\mu\boldsymbol{\xi}_{j,h}
-\mathcal{R}_{h}\boldsymbol{\xi}_j))_{\mathbf{H}(\mathbf{curl};\Omega)} \nonumber\\
=&(\mathcal{R}\boldsymbol{\xi}_j-\mathcal{R}_{h}\boldsymbol{\xi}_j, \mu\boldsymbol{\xi}_j-\mu\boldsymbol{\xi}_{j,h})_{\mathbf{H}(\mathbf{curl};\Omega)} \nonumber\\
\leq&\|\mathcal{R}\boldsymbol{\xi}_j-\mathcal{R}_{h}\boldsymbol{\xi}_j\|_{\mathbf{H}(\mathbf{curl};\Omega)}
\|\mu\boldsymbol{\xi}_j-\mu\boldsymbol{\xi}_{j,h}\|_{\mathbf{H}(\mathbf{curl};\Omega)}.
\end{align}
Substituting \eqref{eq:eigenvector-xi_j-xi_jh-curldistance} into \eqref{eq:eigenvector-Rxi_j-Rhxi_j-distance} yields
\begin{align}
\label{eq:eigenvector-Rxi_j-Rhxi_j-order}
\|\mathcal{R}\boldsymbol{\xi}_j-\mathcal{R}_{h}\boldsymbol{\xi}_j\|_{\mathbf{H}(\mathbf{curl};\Omega)}
\leq \mu\|\boldsymbol{\xi}_j-\boldsymbol{\xi}_{j,h}\|_{\mathbf{H}(\mathbf{curl};\Omega)}
\leq C_{2, j}\mu h^{\min\{t, r\}}\|\boldsymbol{\xi}_j\|_{\mathbf{H}^t(\mathbf{curl};\Omega)}.
\end{align}
Let $C_2=\max\limits_{1\leq j\leq m}C_{2, j}$.
On the one hand, taking the test functions as $\mathcal{R}_h\boldsymbol{\xi}_i\in \mathcal{H}_0^h$ 
and $\mathcal{R}_h\boldsymbol{\xi}_j\in \mathcal{H}_0^h$ in the orthogonality relation \eqref{eq:eigenvector-Rxi_j-Rhxi_j-orthogonal} respectively, we obtain
\begin{align}
(\mathcal{R}\boldsymbol{\xi}_i-\mathcal{R}_h\boldsymbol{\xi}_i, \mathcal{R}_h\boldsymbol{\xi}_j)_{\mathbf{H}(\mathbf{curl};\Omega)}
=(\mathcal{R}\boldsymbol{\xi}_j-\mathcal{R}_h\boldsymbol{\xi}_j, \mathcal{R}_h\boldsymbol{\xi}_i)_{\mathbf{H}(\mathbf{curl};\Omega)}=0.
\end{align}
Combining this with \eqref{eq:eigenvector-Rxi_j-Rhxi_j-order} 
and self-adjointness of $\mathcal R$ and $\mathcal R_h$, it follows that
\begin{align}
&|((\mathcal{R}-\mathcal{R}_h)\boldsymbol{\xi}_i, \boldsymbol{\xi}_j)|=|(\boldsymbol{\xi}_j, \mathcal{R}\boldsymbol{\xi}_i)-(\boldsymbol{\xi}_j, \mathcal{R}_h\boldsymbol{\xi}_i)| \nonumber\\
=&|(\mathcal{R}\boldsymbol{\xi}_j, \mathcal{R}\boldsymbol{\xi}_i)_{\mathbf{H}(\mathbf{curl};\Omega)}-(\mathcal{R}_h\boldsymbol{\xi}_j, \mathcal{R}_h\boldsymbol{\xi}_i)_{\mathbf{H}(\mathbf{curl};\Omega)}| \nonumber\\
=&|(\mathcal{R}\boldsymbol{\xi}_j-\mathcal{R}_h\boldsymbol{\xi}_j, \mathcal{R}\boldsymbol{\xi}_i)_{\mathbf{H}(\mathbf{curl};\Omega)}+(\mathcal{R}_h\boldsymbol{\xi}_j, \mathcal{R}\boldsymbol{\xi}_i-\mathcal{R}_h\boldsymbol{\xi}_i)_{\mathbf{H}(\mathbf{curl};\Omega)}| \nonumber\\
=&|(\mathcal{R}\boldsymbol{\xi}_j-\mathcal{R}_h\boldsymbol{\xi}_j, \mathcal{R}\boldsymbol{\xi}_i)_{\mathbf{H}(\mathbf{curl};\Omega)}| \nonumber\\
=&|(\mathcal{R}\boldsymbol{\xi}_j-\mathcal{R}_h\boldsymbol{\xi}_j, \mathcal{R}\boldsymbol{\xi}_i-\mathcal{R}_h\boldsymbol{\xi}_i)_{\mathbf{H}(\mathbf{curl};\Omega)}
+(\mathcal{R}\boldsymbol{\xi}_j-\mathcal{R}_h\boldsymbol{\xi}_j, \mathcal{R}_h\boldsymbol{\xi}_i)_{\mathbf{H}(\mathbf{curl};\Omega)}| \nonumber\\
=&|(\mathcal{R}\boldsymbol{\xi}_j-\mathcal{R}_h\boldsymbol{\xi}_j, \mathcal{R}\boldsymbol{\xi}_i-\mathcal{R}_h\boldsymbol{\xi}_i)_{\mathbf{H}(\mathbf{curl};\Omega)}| \nonumber\\
\leq&\|\mathcal{R}\boldsymbol{\xi}_j-\mathcal{R}_h\boldsymbol{\xi}_j\|_{\mathbf{H}(\mathbf{curl};\Omega)}
\|\mathcal{R}\boldsymbol{\xi}_i-\mathcal{R}_h\boldsymbol{\xi}_i\|_{\mathbf{H}(\mathbf{curl};\Omega)} \nonumber\\
\leq& C_{2, j}C_{2, i}\mu^2 h^{2\min\{t, r\}}\|\boldsymbol{\xi}_j\|_{\mathbf{H}^t(\mathbf{curl};\Omega)}\|\boldsymbol{\xi}_i\|_{\mathbf{H}^t(\mathbf{curl};\Omega)} \nonumber\\
\leq& C_2^2\mu^2 h^{2\min\{t, r\}}\max_{1\leq j\leq m}\|\boldsymbol{\xi}_j\|_{\mathbf{H}^t(\mathbf{curl};\Omega)}^2.
\end{align}
Thus, the first term
of \eqref{multi-eigenvalue-estimate} satisfies
\begin{align}
\label{eq:multi-eigenvalue-estimate1}
\sum\limits_{i, j=1}^m \left|\left((\mathcal{R}-\mathcal{R}_h)\boldsymbol{\xi}_i, \boldsymbol{\xi}_j\right)\right|\leq m^2C_2^2\mu^2 h^{2\min\{t, r\}}\max_{1\leq j\leq m}\|\boldsymbol{\xi}_j\|_{\mathbf{H}^t(\mathbf{curl};\Omega)}^2.
\end{align}
On the other hand, any function $\boldsymbol{\xi}\in\ker(\mu-\mathcal{R})$ 
can be expressed as $\boldsymbol{\xi}=\sum_{i=1}^mc_i\boldsymbol{\xi}_i$ 
where $c_i\in\mathbb{R}, 1\leq i\leq m$.
By the property of orthonormal bases, $\|\boldsymbol{\xi}\|_{\mathbf{L}^2(\Omega)}=1$ 
is equivalent to $\sum_{i=1}^mc_i^2=1$. 
Therefore, according to \eqref{eq:eigenvector-Rxi_j-Rhxi_j-order} and the Cauchy-Schwarz inequality, the second term 
of \eqref{multi-eigenvalue-estimate} satisfies
\begin{align}\label{eq:multi-eigenvalue-estimate2}
&\|(\mathcal{R}-\mathcal{R}_h)|_{\ker(\mu-\mathcal{R})}\|^2
=\sup_{\boldsymbol{\xi}\in\ker(\mu-\mathcal{R}), \|\boldsymbol{\xi}\|_{\mathbf{L}^2(\Omega)}=1}
\|(\mathcal{R}-\mathcal{R}_h)\boldsymbol{\xi}\|_{\mathbf{L}^2(\Omega)}^2 \nonumber\\
=&\sup_{\sum_{i=1}^mc_i^2=1}\bigg\|\sum_{i=1}^mc_i(\mathcal{R}-\mathcal{R}_h)\boldsymbol{\xi}_i\bigg\|_{\mathbf{L}^2(\Omega)}^2
\leq\sup_{\sum_{i=1}^mc_i^2=1} m\sum_{i=1}^mc_i^2\|(\mathcal{R}-\mathcal{R}_h)\boldsymbol{\xi}_i\|_{\mathbf{L}^2(\Omega)}^2 \nonumber\\
\leq&\sup_{\sum_{i=1}^mc_i^2=1} m\sum_{i=1}^mc_i^2C_{2, i}^2\mu^2 h^{2\min\{t, r\}}\|\boldsymbol{\xi}_i\|_{\mathbf{H}^t(\mathbf{curl};\Omega)}^2 \nonumber\\
\leq& mC_2^2\mu^2 h^{2\min\{t, r\}}\max_{1\leq j\leq m}\|\boldsymbol{\xi}_j\|_{\mathbf{H}^t(\mathbf{curl};\Omega)}^2.
\end{align}
Substituting \eqref{eq:multi-eigenvalue-estimate1} and \eqref{eq:multi-eigenvalue-estimate2} into \eqref{multi-eigenvalue-estimate}, the following error estimate holds 
\begin{align}
|\mu-\mu_{h, j}|\leq Cm(m+1)\mu^2h^{2\min\{t, r\}}\max_{1\leq j\leq m}\|\boldsymbol{\xi}_j\|_{\mathbf{H}^t(\mathbf{curl};\Omega)}^2, \quad\forall j=1, \cdots, m, 
\end{align}
where the constant $C=C_1C_2^2$ depends on $t$ and $\Omega$.
\end{proof}

\begin{cor}
According to the spectral equivalence in Theorems \ref{thm:operater-R-equivalence-variational} and \ref{thm:operater-Rh-equivalence-variational},
by taking $\mu=1/(\lambda+1)$ and $\mu_{h,j}=1/(\lambda_{h,j}+1)$ in \eqref{eq:R-R_h-multi-eigenvalue-approximate_order},
the following error estimate for the eigenvalues of the Maxwell system holds
\begin{align}
\label{eq:maxwell-multi-eigenvalue-approximate_order}
\left|\lambda-\lambda_{h, j}\right|
&=\left|\frac{1}{\mu}-\frac{1}{\mu_{h, j}}\right|
\leq \frac{2}{\mu^2}Cm(m+1)\mu^2h^{2\min\{t, r\}}\max_{1\leq j\leq m}\|\boldsymbol{\xi}_j\|_{\mathbf{H}^t(\mathbf{curl};\Omega)}^2 \nonumber\\
&\leq 2Cm(m+1)h^{2\min\{t, r\}}\max_{1\leq j\leq m}\|\boldsymbol{\xi}_j\|_{\mathbf{H}^t(\mathbf{curl};\Omega)}^2, \quad\forall j=1, \cdots, m,
\end{align}
where the constant $C$ depends on $t$ and $\Omega$.
\end{cor}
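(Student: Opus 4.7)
The statement is a direct corollary of Theorem \ref{thm:R-R_h-multi-eigenvalue-eigenvector-approximate_order} combined with the spectral equivalences established in Theorems \ref{thm:operater-R-equivalence-variational} and \ref{thm:operater-Rh-equivalence-variational}, so the plan is purely algebraic and requires no new analytic input.

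First, I would transfer the problem from the Maxwell eigenvalues to the operator eigenvalues. By Theorem \ref{thm:operater-R-equivalence-variational}, an eigenvalue $\lambda$ of Problem \ref{primal-problem} of multiplicity $m$ corresponds bijectively to an eigenvalue $\mu=1/(\lambda+1)$ of the operator $\mathcal{R}$ with the same eigenspace and the same multiplicity, and Theorem \ref{thm:operater-Rh-equivalence-variational} supplies the analogous correspondence $\mu_{h,j}=1/(\lambda_{h,j}+1)$ at the discrete level. Applying Theorem \ref{thm:R-R_h-multi-eigenvalue-eigenvector-approximate_order} to $\mu$ then produces exactly $m$ discrete operator eigenvalues $\mu_{h,1},\ldots,\mu_{h,m}$ clustering around $\mu$ and each satisfying the estimate \eqref{eq:R-R_h-multi-eigenvalue-approximate_order}.

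Second, I would push this bound back to the Maxwell eigenvalues through the elementary identity
\begin{align*}
|\lambda-\lambda_{h,j}|=\left|\frac{1}{\mu}-\frac{1}{\mu_{h,j}}\right|=\frac{|\mu-\mu_{h,j}|}{\mu\,\mu_{h,j}}.
\end{align*}
The pointwise convergence $\mathcal{R}_h\to\mathcal{R}$ together with collective compactness (Theorems \ref{thm:Rh-Point convergence-R} and \ref{thm:discrete-compactness-property}) ensures $\mu_{h,j}\to\mu$ as $h\to 0^+$, so for $h$ small enough we have $\mu_{h,j}\geq\mu/2$ and therefore $1/(\mu\,\mu_{h,j})\leq 2/\mu^2$. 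Substituting the bound \eqref{eq:R-R_h-multi-eigenvalue-approximate_order} into this identity, the factor $2/\mu^2$ cancels the $\mu^2$ appearing on the right-hand side and delivers exactly the stated inequality.

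There is no genuine obstacle here; the only point worth a brief verification is the bound $\mu_{h,j}\geq\mu/2$ for sufficiently small $h$, which is already built into the clustering conclusion of Theorem \ref{thm:R-R_h-multi-eigenvalue-eigenvector-approximate_order} and therefore requires no separate argument.
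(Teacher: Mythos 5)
Your proposal is correct and follows essentially the same route as the paper: the corollary is obtained by the identity $|1/\mu-1/\mu_{h,j}|=|\mu-\mu_{h,j}|/(\mu\mu_{h,j})$, the observation that $\mu_{h,j}\geq\mu/2$ for sufficiently small $h$ (which yields the factor $2/\mu^2$), and substitution of the bound from Theorem \ref{thm:R-R_h-multi-eigenvalue-eigenvector-approximate_order} so that the $\mu^2$ factors cancel. The paper treats this as an immediate computation embedded in the statement itself, and your write-up supplies exactly that computation with the one small point ($\mu_{h,j}\geq\mu/2$) correctly justified by the spectral convergence.
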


Theorem \ref{thm:R-R_h-multi-eigenvalue-eigenvector-approximate_order} provides 
the convergence rates for the positive discrete eigenvalues $\mu_h$ of the operator sequence $\{\mathcal{R}_h\}_{h\in\wedge}$ to the eigenvalue $\mu$ of the operator $\mathcal{R}$. Specifically, the convergence order of $\mu_h$ to $\mu$ with respect to the mesh size $h$ is $2\min\{t, r\}$, where $t$ denotes the regularity of the eigenfunctions in $\ker(\mu-\mathcal{R})$.
Consequently, the following theorem 
essentially characterizes the gap between the direct sum of the eigenvector spaces 
of all $\mu_h$ and the eigenspace $\ker(\mu-\mathcal{R})$, 
which provides the corresponding convergence order for the eigenfunction approximations. 
\begin{thm}\label{R-R_h-eigenvector-approximate_order}
Under the conditions of Theorem \ref{thm:R-R_h-multi-eigenvalue-eigenvector-approximate_order},
let $\mu_h$ be an eigenvalue of the
operator $\mathcal{R}_h$ such that $\lim\limits_{h\to0^+}\mu_h=\mu$, and let $\boldsymbol{\xi}_h\in \mathcal{H}_0^h$ satisfy 
\begin{align}
\mathcal{R}_h\boldsymbol{\xi}_h=\mu_h\boldsymbol{\xi}_h, \quad\|\boldsymbol{\xi}_h\|_{\mathbf{L}^2(\Omega)}=1, \quad\forall h\in \wedge.
\end{align}
Then there exist a constant $C$ depending on $t$ and $\Omega$ and an eigenfunction $\boldsymbol{\xi}\in\ker(\mu-\mathcal{R})$ of $\mathcal{R}$ corresponding to $\mu$ such that
\begin{align}
\|\boldsymbol{\xi}-\boldsymbol{\xi}_h\|_{\mathbf{H}(\mathbf{curl};\Omega)}\leq C\sqrt{m}h^{\min\{t, r\}}\max_{1\leq j\leq m}\|\boldsymbol{\xi}_j\|_{\mathbf{H}^t(\mathbf{curl};\Omega)}.
\end{align}
\end{thm}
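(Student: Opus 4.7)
The plan is to combine the abstract spectral convergence result of Lemma~\ref{operator-eigenvector-order} with a bootstrap argument that exploits the smoothing property of $\mathcal{R}_h$ from \eqref{eq:Rhf-form-control} together with the curl-norm bound \eqref{eq:eigenvector-Rxi_j-Rhxi_j-order} already produced in the proof of Theorem~\ref{thm:R-R_h-multi-eigenvalue-eigenvector-approximate_order}. Concretely, I first extract an $\mathbf{L}^2(\Omega)$ estimate for $\boldsymbol{\xi}-\boldsymbol{\xi}_h$ from the general theory, and then promote it to the target $\mathbf{H}(\mathbf{curl};\Omega)$ estimate via an algebraic identity built from the two eigenvalue relations.

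For the first step I apply Lemma~\ref{operator-eigenvector-order} with $H=\mathbf{L}^2(\Omega)$, $T=\mathcal{R}$, $T_n=\mathcal{R}_{h_n}$; the required collective compactness and pointwise convergence are supplied by Theorems~\ref{thm:discrete-compactness-property} and~\ref{thm:Rh-Point convergence-R}. This produces some $\boldsymbol{\xi}\in\ker(\mu-\mathcal{R})$ satisfying $\|\boldsymbol{\xi}-\boldsymbol{\xi}_h\|_{\mathbf{L}^2(\Omega)}\leq C\,\|(\mathcal{R}-\mathcal{R}_h)|_{\ker(\mu-\mathcal{R})}\|$, whose right-hand side was already shown in \eqref{eq:multi-eigenvalue-estimate2} to be at most $C\sqrt{m}\,\mu\,h^{\min\{t,r\}}\max_j\|\boldsymbol{\xi}_j\|_{\mathbf{H}^t(\mathbf{curl};\Omega)}$; this delivers the $\mathbf{L}^2$ bound of the desired order.

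The upgrade to $\mathbf{H}(\mathbf{curl};\Omega)$ proceeds from the identity derived by inserting $\mu^{-1}\mathcal{R}_h\boldsymbol{\xi}$ and $\mu^{-1}\mathcal{R}_h\boldsymbol{\xi}_h$ into the difference of the eigenvalue relations $\boldsymbol{\xi}=\mu^{-1}\mathcal{R}\boldsymbol{\xi}$ and $\boldsymbol{\xi}_h=\mu_h^{-1}\mathcal{R}_h\boldsymbol{\xi}_h$:
\begin{equation*}
\boldsymbol{\xi}-\boldsymbol{\xi}_h=\frac{1}{\mu}(\mathcal{R}-\mathcal{R}_h)\boldsymbol{\xi}+\frac{1}{\mu}\mathcal{R}_h(\boldsymbol{\xi}-\boldsymbol{\xi}_h)+\left(\frac{1}{\mu}-\frac{1}{\mu_h}\right)\mathcal{R}_h\boldsymbol{\xi}_h.
\end{equation*}
Each term is then estimated in the curl-norm. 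For the first, expanding $\boldsymbol{\xi}=\sum_i c_i\boldsymbol{\xi}_i$ in the orthonormal basis and noting that $\|\boldsymbol{\xi}\|_{\mathbf{L}^2(\Omega)}$ is bounded (by the $\mathbf{L}^2$ estimate above plus $\|\boldsymbol{\xi}_h\|_{\mathbf{L}^2(\Omega)}=1$), Cauchy--Schwarz yields $\|\boldsymbol{\xi}\|_{\mathbf{H}^t(\mathbf{curl};\Omega)}\leq C\sqrt{m}\max_j\|\boldsymbol{\xi}_j\|_{\mathbf{H}^t(\mathbf{curl};\Omega)}$, so \eqref{eq:eigenvector-Rxi_j-Rhxi_j-order} applied to $\boldsymbol{\xi}$ gives the bound $O(\sqrt{m}\,h^{\min\{t,r\}})$. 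For the second, the smoothing inequality \eqref{eq:Rhf-form-control} yields $\|\mathcal{R}_h(\boldsymbol{\xi}-\boldsymbol{\xi}_h)\|_{\mathbf{H}(\mathbf{curl};\Omega)}\leq\|\boldsymbol{\xi}-\boldsymbol{\xi}_h\|_{\mathbf{L}^2(\Omega)}$, reducing it to the first step. For the third, $\|\mathcal{R}_h\boldsymbol{\xi}_h\|_{\mathbf{H}(\mathbf{curl};\Omega)}=\mu_h\|\boldsymbol{\xi}_h\|_{\mathbf{H}(\mathbf{curl};\Omega)}$ is uniformly bounded while $|\mu^{-1}-\mu_h^{-1}|$ is $O(h^{2\min\{t,r\}})$ by Theorem~\ref{thm:R-R_h-multi-eigenvalue-eigenvector-approximate_order}, so the term is of strictly higher order.

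The main obstacle is precisely the norm mismatch: Lemma~\ref{operator-eigenvector-order} only delivers $\mathbf{L}^2$ convergence, whereas the statement requires the stronger $\mathbf{H}(\mathbf{curl};\Omega)$ control. The key device is the identity above, which routes $\boldsymbol{\xi}-\boldsymbol{\xi}_h$ through $\mathcal{R}_h$ and thereby converts the available $\mathbf{L}^2$ estimate into a curl-norm estimate via the one-derivative gain encoded in \eqref{eq:Rhf-form-control}. Some care is also needed so that the implicit constants remain independent of the particular $\boldsymbol{\xi}\in\ker(\mu-\mathcal{R})$ selected by the abstract lemma; this is achieved by expanding in the orthonormal basis and absorbing the dimension factor into the $\sqrt{m}$ factor appearing in the final estimate.
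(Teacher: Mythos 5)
Your proposal is correct and follows essentially the same route as the paper's proof: first the $\mathbf{L}^2$ estimate from Lemma \ref{operator-eigenvector-order} combined with the bound \eqref{eq:multi-eigenvalue-estimate2}, then the three-term splitting of $\boldsymbol{\xi}-\boldsymbol{\xi}_h$ via the eigenvalue relations $\boldsymbol{\xi}=\mu^{-1}\mathcal{R}\boldsymbol{\xi}$, $\boldsymbol{\xi}_h=\mu_h^{-1}\mathcal{R}_h\boldsymbol{\xi}_h$, with the eigenvalue-difference term absorbed as higher order, the $(\mathcal{R}-\mathcal{R}_h)\boldsymbol{\xi}$ term handled by expanding in the orthonormal basis and invoking \eqref{eq:eigenvector-Rxi_j-Rhxi_j-order}, and the $\mathcal{R}_h(\boldsymbol{\xi}-\boldsymbol{\xi}_h)$ term reduced to the $\mathbf{L}^2$ estimate through \eqref{eq:Rhf-form-control}. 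The only cosmetic difference is where the factors $1/\mu$ versus $1/\mu_h$ sit in the telescoping, which does not affect the argument.
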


\begin{proof}
By Lemma \ref{operator-eigenvector-order}, there is a constant $C_1$ and a function $\boldsymbol{\xi}\in\ker(\lambda-\mathcal{R})$ such that
\begin{align}
\label{eq:R-R_h-eigenvector-approximate}
\|\boldsymbol{\xi}-\boldsymbol{\xi}_h\|_{\mathbf{L}^2(\Omega)}\leq C_1\|(\mathcal{R}-\mathcal{R}_h)|_{\ker(\lambda-\mathcal{R})}\|, \quad\forall h\in \wedge.
\end{align}
In the proof of Theorem \ref{thm:R-R_h-multi-eigenvalue-eigenvector-approximate_order}, we have already derived the estimate \eqref{eq:multi-eigenvalue-estimate2} that there exists a constant $C_2$ such that
\begin{align}
\|(\mathcal{R}-\mathcal{R}_h)|_{\ker(\lambda-\mathcal{R})}\|^2 \leq mC_2^2\mu^2 h^{2\min\{t, r\}}\max_{1\leq j\leq m}\|\boldsymbol{\xi}_j\|_{\mathbf{H}^t(\mathbf{curl};\Omega)}^2.
\end{align}
Substituting this into \eqref{eq:R-R_h-eigenvector-approximate} yields the following $\mathbf{L}^2(\Omega)$-norm estimate for $\boldsymbol{\xi}-\boldsymbol{\xi}_h$
\begin{align}
\label{xi-xi_h-eigenvector-L2-approximate_order}
\|\boldsymbol{\xi}-\boldsymbol{\xi}_h\|_{\mathbf{L}^2(\Omega)} \leq \sqrt{m}C_1C_2\mu h^{\min\{t, r\}}\max_{1\leq j\leq m}\|\boldsymbol{\xi}_j\|_{\mathbf{H}^t(\mathbf{curl};\Omega)}.
\end{align}
The $\mathbf{H}(\mathbf{curl};\Omega)$-norm error
$\|\boldsymbol{\xi}-\boldsymbol{\xi}_h\|_{\mathbf{H}(\mathbf{curl};\Omega)}$ can be decomposed into the sum of following three terms
\begin{align}\label{R-R_h-eigenvector-approximate_order-I123}
&\|\boldsymbol{\xi}-\boldsymbol{\xi}_h\|_{\mathbf{H}(\mathbf{curl};\Omega)} 
=\left\|\frac{1}{\mu}\mathcal{R}\boldsymbol{\xi}-\frac{1}{\mu_h}\mathcal{R}_h\boldsymbol{\xi}_h\right\|_{\mathbf{H}(\mathbf{curl};\Omega)} \nonumber\\
=&\left\|\frac{1}{\mu}\mathcal{R}\boldsymbol{\xi}-\frac{1}{\mu_h}\mathcal{R}\boldsymbol{\xi} +\frac{1}{\mu_h}\mathcal{R}\boldsymbol{\xi} -\frac{1}{\mu_h}\mathcal{R}_h\boldsymbol{\xi} +\frac{1}{\mu_h}\mathcal{R}_h\boldsymbol{\xi} -\frac{1}{\mu_h}\mathcal{R}_h\boldsymbol{\xi}_h\right\|_{\mathbf{H}(\mathbf{curl};\Omega)} \nonumber\\
\leq& \left|\frac{1}{\mu}-\frac{1}{\mu_h}\right|\|\mathcal{R}\boldsymbol{\xi}\|_{\mathbf{H}(\mathbf{curl};\Omega)} +\frac{1}{\mu_h}\|\mathcal{R}\boldsymbol{\xi} -\mathcal{R}_h\boldsymbol{\xi}\|_{\mathbf{H}(\mathbf{curl};\Omega)}\nonumber\\
&+\frac{1}{\mu_h}\|\mathcal{R}_h\boldsymbol{\xi}-\mathcal{R}_h\boldsymbol{\xi}_h\|_{\mathbf{H}(\mathbf{curl};\Omega)}.
\end{align}
These three terms represent the eigenvalue approximation error, the operator approximation error, and the discrete solution operator error, respectively.

For sufficiently small $h$,
taking $f=\boldsymbol{\xi}$ in \eqref{eq:Rf-form-control} and combining it with \eqref{xi-xi_h-eigenvector-L2-approximate_order} yields
\begin{align}
\label{eq:eta_h-L2-estimate}
\|\mathcal{R}\boldsymbol{\xi}\|_{\mathbf{H}(\mathbf{curl};\Omega)}
\leq\|\boldsymbol{\xi}\|_{\mathbf{L}^2(\Omega)}
\leq \|\boldsymbol{\xi}-\boldsymbol{\xi}_h\|_{\mathbf{L}^2(\Omega)}+\|\boldsymbol{\xi}_h\|_{\mathbf{L}^2(\Omega)}
=O(h^{\min\{t, r\}})+1\leq 2.
\end{align}
By Theorem \ref{thm:R-R_h-multi-eigenvalue-eigenvector-approximate_order}, there exists a constant $C_3$ such that
\begin{align}
\left|\frac{1}{\mu}-\frac{1}{\mu_h}\right|=\frac{|\mu-\mu_h|}{\mu\mu_h}\leq \frac{2}{\mu^2}C_3m(m+1)\mu^2h^{2\min\{t, r\}}\max_{1\leq j\leq m}\|\boldsymbol{\xi}_j\|_{\mathbf{H}^t(\mathbf{curl};\Omega)}^2.
\end{align}
Combining this with \eqref{eq:eta_h-L2-estimate},
the first term of \eqref{R-R_h-eigenvector-approximate_order-I123} satisfies
\begin{align}
\label{R-R_h-eigenvector-approximate_order-I1}
\left|\frac{1}{\mu}-\frac{1}{\mu_h}\right|\|\mathcal{R}\boldsymbol{\xi}\|_{\mathbf{H}(\mathbf{curl};\Omega)}
\leq 4C_3m(m+1)h^{2\min\{t, r\}}\max_{1\leq j\leq m}\|\boldsymbol{\xi}_j\|_{\mathbf{H}^t(\mathbf{curl};\Omega)}^2.
\end{align}
Furthermore,
since $\{\boldsymbol{\xi}_1, \ldots, \boldsymbol{\xi}_m\}$ is an $\mathbf{L}^2(\Omega)$-orthonormal basis of $\ker(\mu-\mathcal{R})$, 
$\boldsymbol{\xi}$ can be expressed as $\boldsymbol{\xi}=\sum_{i=1}^mc_{h, i}\boldsymbol{\xi}_i$ where $c_{h, i}\in\mathbb{R}, 1\leq i\leq m$ satisfy 
$\sum_{i=1}^m c_{h, i}^2=\|\boldsymbol{\xi}\|_{\mathbf{L}^2(\Omega)}^2\leq4$
according to \eqref{eq:eta_h-L2-estimate}.
In the proof of Theorem \ref{thm:R-R_h-multi-eigenvalue-eigenvector-approximate_order}, we have derived the estimate 
\eqref{eq:eigenvector-Rxi_j-Rhxi_j-order} that for each $\boldsymbol{\xi}_i$, there is a constant $C_{4, i}$ (corresponding to the constant $C_{2,j}$ in \eqref{eq:eigenvector-Rxi_j-Rhxi_j-order}) such that 
\begin{align}\label{eq:eigenvector-Rxi_i-Rhxi_i-order}
\|\mathcal{R}\boldsymbol{\xi}_i-\mathcal{R}_{h}\boldsymbol{\xi}_i\|_{\mathbf{H}(\mathbf{curl};\Omega)} 
\leq C_{4, i}\mu h^{\min\{t, r\}}\|\boldsymbol{\xi}_i\|_{\mathbf{H}^t(\mathbf{curl};\Omega)}.
\end{align}
Let $C_4=\max\limits_{1\leq i\leq m}C_{4, i}$,
then \eqref{eq:eigenvector-Rxi_i-Rhxi_i-order} yields that
\begin{align}
&\|\mathcal{R}\boldsymbol{\xi}-\mathcal{R}_{h}\boldsymbol{\xi}\|_{\mathbf{H}(\mathbf{curl};\Omega)}^2 
=\bigg\|\sum_{i=1}^mc_{h, i}(\mathcal{R}-\mathcal{R}_h)\boldsymbol{\xi}_i\bigg\|_{\mathbf{H}(\mathbf{curl};\Omega)}^2\nonumber\\
&\leq m\sum_{i=1}^mc_{h, i}^2\|(\mathcal{R}-\mathcal{R}_h)\boldsymbol{\xi}_i\|_{\mathbf{H}(\mathbf{curl};\Omega)}^2\leq m\sum_{i=1}^mc_{h, i}^2C_{4, i}^2\mu^2 h^{2\min\{t, r\}}\|\boldsymbol{\xi}_i\|_{\mathbf{H}^t(\mathbf{curl};\Omega)}^2\nonumber\\
&\leq 4mC_{4}^2\mu^2 h^{2\min\{t, r\}}\max_{1\leq j\leq m}\|\boldsymbol{\xi}_j\|_{\mathbf{H}^t(\mathbf{curl};\Omega)}^2.
\quad\quad\quad 
\end{align}
Therefore, the second term of \eqref{R-R_h-eigenvector-approximate_order-I123} satisfies
\begin{align}
\label{R-R_h-eigenvector-approximate_order-I2}
\frac{1}{\mu_h}\|\mathcal{R}\boldsymbol{\xi}-\mathcal{R}_h\boldsymbol{\xi}\|_{\mathbf{H}(\mathbf{curl};\Omega)}
&\leq \frac{2}{\mu}\cdot2\sqrt{m}C_4\mu h^{\min\{t, r\}}\max_{1\leq j\leq m}\|\boldsymbol{\xi}_j\|_{\mathbf{H}^t(\mathbf{curl};\Omega)} \nonumber\\
&=4\sqrt{m}C_4h^{\min\{t, r\}}\max_{1\leq j\leq m}\|\boldsymbol{\xi}_j\|_{\mathbf{H}^t(\mathbf{curl};\Omega)}.
\end{align}
Moreover, taking $f=\boldsymbol{\xi}-\boldsymbol{\xi}_h$ in \eqref{eq:Rhf-form-control} gives
\begin{align}
\|\mathcal{R}_h\boldsymbol{\xi}-\mathcal{R}_h\boldsymbol{\xi}_h\|_{\mathbf{H}(\mathbf{curl};\Omega)} 
\leq\|\boldsymbol{\xi}-\boldsymbol{\xi}_h\|_{\mathbf{L}^2(\Omega)}.
\end{align}
Combining this with the $\mathbf{L}^2(\Omega)$-estimate \eqref{xi-xi_h-eigenvector-L2-approximate_order}, 
the third term of \eqref{R-R_h-eigenvector-approximate_order-I123} satisfies
\begin{align}
\label{R-R_h-eigenvector-approximate_order-I3}
\frac{1}{\mu_h}\|\mathcal{R}_h\boldsymbol{\xi}-\mathcal{R}_h\boldsymbol{\xi}_h\|_{\mathbf{H}(\mathbf{curl};\Omega)}
&\leq \frac{2}{\mu}\sqrt{m}C_1C_2\mu h^{\min\{t, r\}}\max_{1\leq j\leq m}\|\boldsymbol{\xi}_j\|_{\mathbf{H}^t(\mathbf{curl};\Omega)} \nonumber\\
&= 2\sqrt{m}C_1C_2 h^{\min\{t, r\}}\max_{1\leq j\leq m}\|\boldsymbol{\xi}_j\|_{\mathbf{H}^t(\mathbf{curl};\Omega)}.
\end{align}
Substituting \eqref{R-R_h-eigenvector-approximate_order-I1}, \eqref{R-R_h-eigenvector-approximate_order-I2} 
and \eqref{R-R_h-eigenvector-approximate_order-I3} into \eqref{R-R_h-eigenvector-approximate_order-I123}, 
since the first term is a higher order  
than those of the other two terms for suﬀiciently small $h$, it follows that
\begin{align}
\|\boldsymbol{\xi}-\boldsymbol{\xi}_h\|_{\mathbf{H}(\mathbf{curl};\Omega)}
\leq C\sqrt{m}h^{\min\{t, r\}}\max_{1\leq j\leq m}\|\boldsymbol{\xi}_j\|_{\mathbf{H}^t(\mathbf{curl};\Omega)}, 
\end{align}
where the constant $C=4(C_4+C_1C_2)$ depends on $t$ and $\Omega$.
\end{proof}

\section{Numerical results}\label{Section_Numerical}
This section presents numerical examples on three polyhedrons $\Omega$ 
to validate the proposed numerical method and the derived theoretical error estimates.  
For this aim, we begin by creating an initial mesh $\mathcal{T}_H$ consisting of regularly  
distributed tetrahedral elements over $\Omega$.
Then, the initial mesh $\mathcal{T}_H$ is uniformly refined several 
times to obtain fine meshes $\mathcal{T}_h$. 
On these meshes, through solving the discrete mixed Problem \ref{discrete-mixed-problem} 
to obtain the approximate eigenvalues 
and eigenfunctions for the Maxwell eigenvalue problem \eqref{maxwell-eigenvalue-problem}, 
where the spaces are selected as $\mathbf{X}_{h, \tau0}=\mathscr{V}_{h, \tau0}^r$ 
and $Y_{h, 0}=\mathscr{L}_{h, 0}^{r+1}$. In all examples here, we will test the two cases of 
$r=1$ and $r=2$. 
The resulting generalized algebraic eigenvalue problem $Ax=\lambda Bx$ has been solved using the
Krylov-Schur method from SLEPc \cite{key21} 
and the General Conjugate Gradient Eigensolver (GCGE)  \cite{key47,key48,key49}.
Furthermore, the convergence criterion is set to be $\|Ax-\lambda Bx\|_2\leq1.0 \times 10^{-10}$, 
where $\|\cdot\|_2$ denotes the $\mathrm{L}^2$-norm for the vectors.

The numerical examples are carried out on LSSC-IV in the State Key Laboratory 
of Scientific and Engineering Computing, Chinese Academy of Sciences.
Each computing node has two 18-core Intel Xeon Gold 6140 processors at 2.3 GHz 
and 192 GB memory. 
The matrices are produced by the package of open parallel finite element method(OpenPFEM). 
OpenPFEM provides the parallel finite element discretization 
for the partial differential equations and corresponding eigenvalue problems. 
The source code can be downloaded from  \href{https://gitlab.com/xiegroup}{https://gitlab.com/xiegroup}.

\subsection{Unit cube domain}\label{Section_Example_Cube}
In the first example, we consider the Maxwell eigenvalue problems defined on the unit cube. 
\begin{figure}[htbp]
\centering
\includegraphics[width=0.4\textwidth]{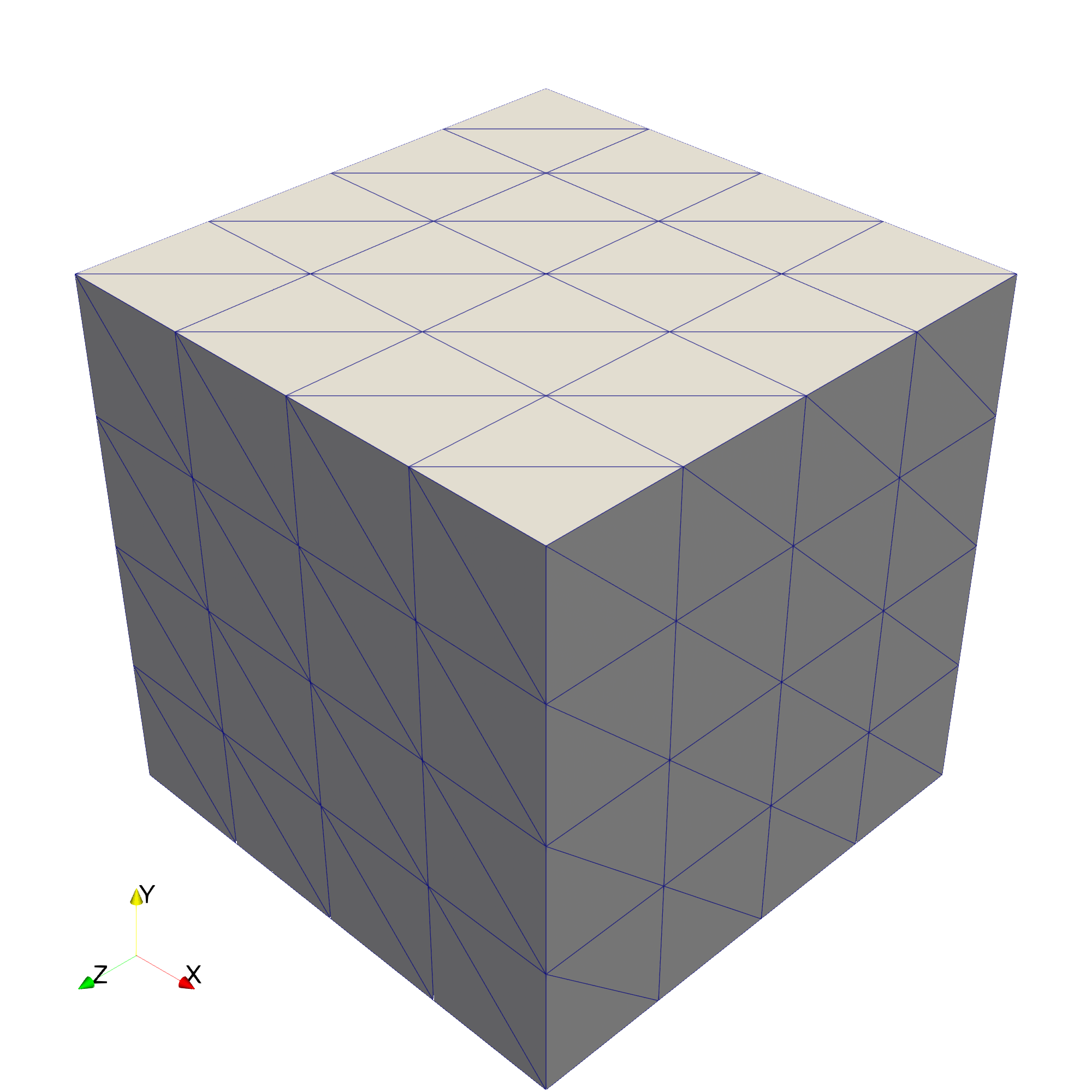}
\caption{The unit cube domain with the mesh size $h=0.25$}
\label{fig:Unit cube domain}
\end{figure}
The unit cube $\Omega = (0,1)^3$ is selected as the computational 
domain for the first numerical example. 
A regular structured mesh consisting of $384$ tetrahedral elements 
with a diameter of $h=0.25$ which is shown in Figure \ref{fig:Unit cube domain}
serves as a coarse partition. 

On the unit cube $\Omega = (0,1)^3$, the exact eigenvalues of 
the Maxwell eigenvalue problem \eqref{maxwell-eigenvalue-problem} can be analytically expressed as 
\begin{align}
\lambda_{m, n, \ell}=(m^2+n^2+\ell^2)\pi^2,
\end{align}
where $m, n, \ell \in \mathbb{N}$ and at least two indices are nonzero.
The first 11 eigenvalues are $(2, 2, 2, 3, 3, 5, 5, 5, 5, 5, 5)\pi^2$, respectively.
For each triplet $(m, n, \ell)$, the set of linearly independent eigenfunctions corresponding to $\lambda_{m, n, \ell}$ is
\begin{eqnarray}\label{eq:Unit-cubes-exact-eigenvector}
\left[
\begin{array}{l}
A\cos(m\pi x)\sin(n\pi y)\sin(\ell\pi z)  \\
B\cos(n\pi y)\sin(m\pi x)\sin(\ell\pi z)  \\
C\cos(\ell\pi z)\sin(m\pi x)\sin(n\pi y)
\end{array}
\right],
\end{eqnarray}
where the coefficients $(A, B, C)$ satisfying $Am+Bn+C\ell=0$. 
Specifically, the three linearly independent eigenfunctions corresponding to the eigenvalue $2\pi^2$ are
\begin{eqnarray}
\label{eq:Unit-cubes-2pi2-eigenvectors}
\left\{
\begin{array}{l}
{\boldsymbol{\xi}}_1(x, y, z)=[0, 0, \sin(\pi x)\sin(\pi y)]^\top,  \\
{\boldsymbol{\xi}}_2(x, y, z)=[0, \sin(\pi x)\sin(\pi z), 0]^\top,  \\
{\boldsymbol{\xi}}_3(x, y, z)=[\sin(\pi y)\sin(\pi z), 0, 0]^\top.
\end{array}
\right.
\end{eqnarray}

The method for computing the distance between the discrete eigenspace and the exact one is as follows. 
Taking the first eigenfunction approximation $\boldsymbol{\xi}_h$ 
as an example,  
we first perform its $\mathbf{L}^2$ projection onto the space spanned by the three eigenfunctions $\boldsymbol{\xi}_1$, 
$\boldsymbol{\xi}_2$ and $\boldsymbol{\xi}_3$ in \eqref{eq:Unit-cubes-2pi2-eigenvectors}.
The projection is denoted as 
\begin{align}
\mathbf{P}\boldsymbol{\xi}_h:=a_1\boldsymbol{\xi}_1+a_2\boldsymbol{\xi}_2+a_3\boldsymbol{\xi}_3,   
\end{align}
satisfying the following Galerkin condition 
\begin{align}\label{projection-inner product}
(\mathbf{P}\boldsymbol{\xi}_h,\boldsymbol{\xi}_i)=(\boldsymbol{\xi}_h,\boldsymbol{\xi}_i),\ \ \ ~\forall i=1,2,3,
\end{align}
from which the values of $a_i,i=1,2,3$ can be solved.
Besides, it holds that
\begin{align}\label{Hcurl-projection-inner product}
&(\mathbf{P}\boldsymbol{\xi}_h,\boldsymbol{\xi}_i)_{\mathbf{H}(\mathbf{curl};\Omega)}
=(\mathbf{P}\boldsymbol{\xi}_h,\nabla\times\nabla\times\boldsymbol{\xi}_i+\boldsymbol{\xi}_i)
=(2\pi^2+1)(\mathbf{P}\boldsymbol{\xi}_h,\boldsymbol{\xi}_i) \nonumber\\
=&(2\pi^2+1)(\boldsymbol{\xi}_h,\boldsymbol{\xi}_i)
=(\boldsymbol{\xi}_h,\nabla\times\nabla\times\boldsymbol{\xi}_i+\boldsymbol{\xi}_i)
=(\boldsymbol{\xi}_h,\boldsymbol{\xi}_i)_{\mathbf{H}(\mathbf{curl};\Omega)}
,\ \ \ ~\forall i=1,2,3,
\end{align}
thus $\mathbf{P}\boldsymbol{\xi}_h$ is also the $\mathbf{H}(\mathbf{curl};\Omega)$ projection of $\boldsymbol{\xi}_h$ onto the eigenfunction space.
The relative $\mathbf{L}^2(\Omega)$ error and $\mathbf{H}(\mathbf{curl};\Omega)$ error of  $\boldsymbol{\xi}_h$ is obtained by computing
\begin{align}
\label{eq:L2-eigvec-error}
\frac{\|\boldsymbol{\xi}_h-\mathbf{P}\boldsymbol{\xi}_h\|_{\mathbf{L}^2(\Omega)}}
{\|\boldsymbol{\xi}_h\|_{\mathbf{L}^2(\Omega)}} 
\quad \mathrm{and} \quad
\frac{\|\boldsymbol{\xi}_h-\mathbf{P}\boldsymbol{\xi}_h\|_{\mathbf{H}(\mathbf{curl};\Omega)}}
{\|\boldsymbol{\xi}_h\|_{\mathbf{H}(\mathbf{curl};\Omega)}}.
\end{align}

\begin{figure}[ht]
\centering
\includegraphics[width=0.49\textwidth]{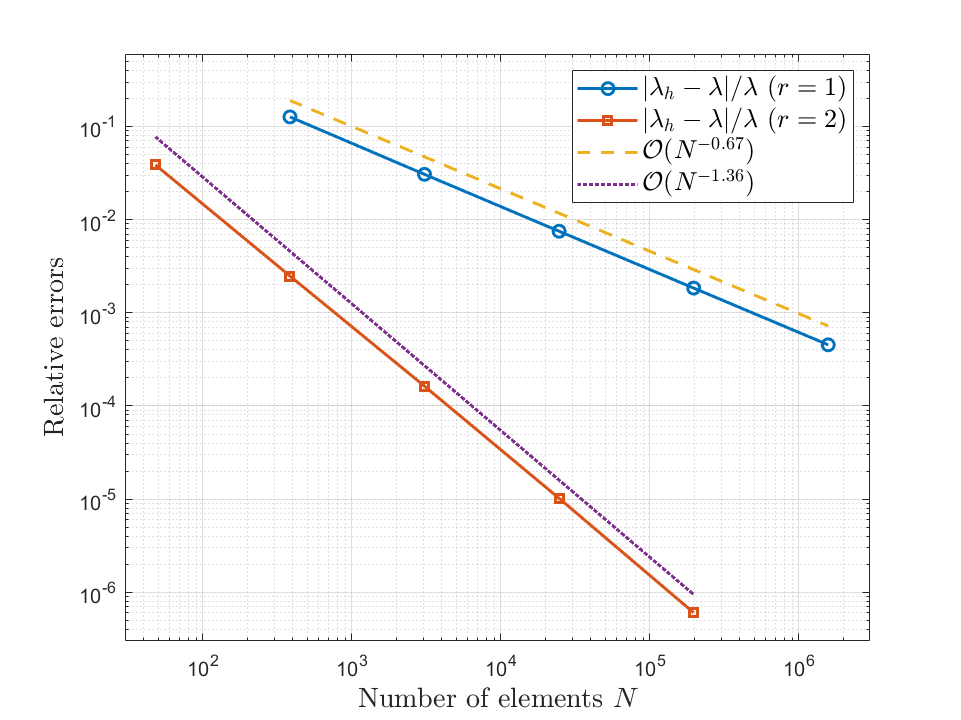}
\includegraphics[width=0.49\textwidth]{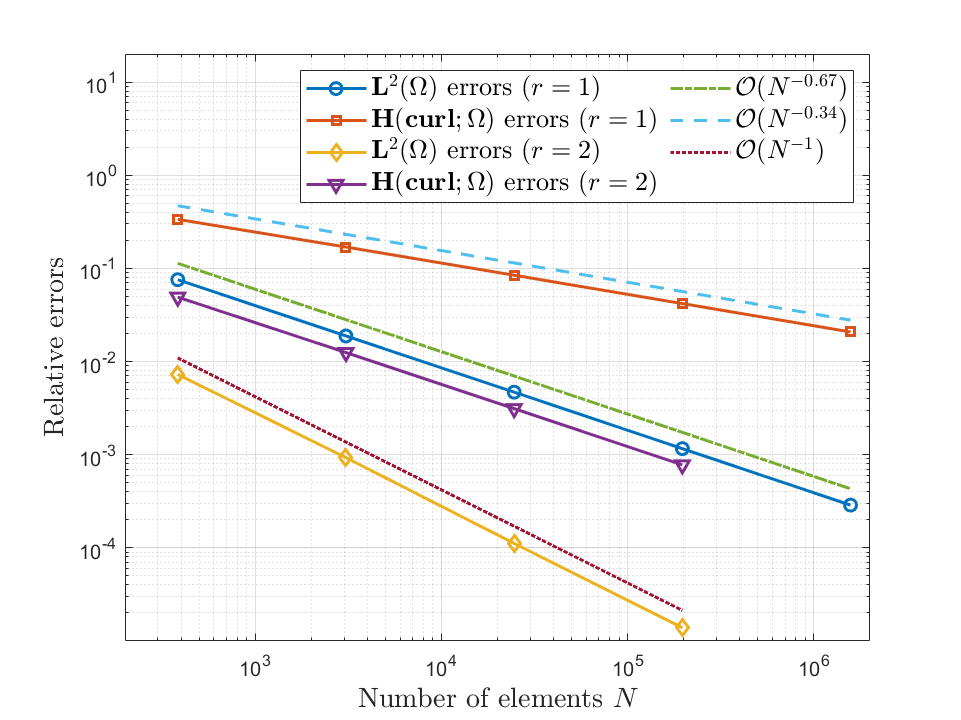}
\caption{Errors for the smallest positive eigenvalue and associated eigenfunction for the Maxwell eigenvalue 
problem on the unit cube domain}\label{fig:cube 1}
\end{figure}
\begin{figure}[ht]
\centering
\includegraphics[width=0.49\textwidth]{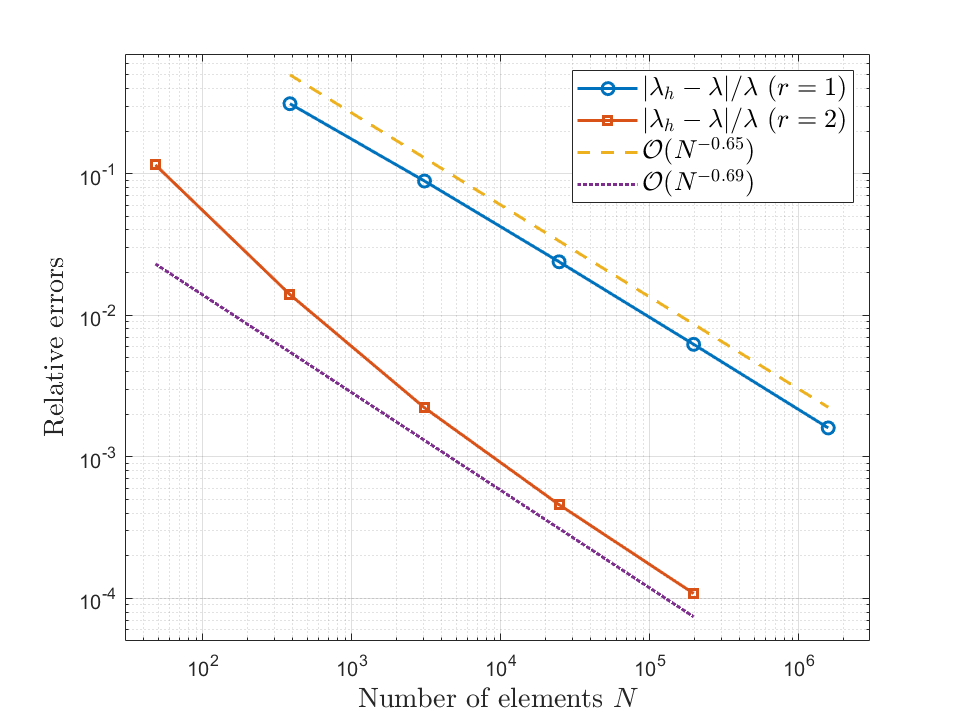}
\includegraphics[width=0.49\textwidth]{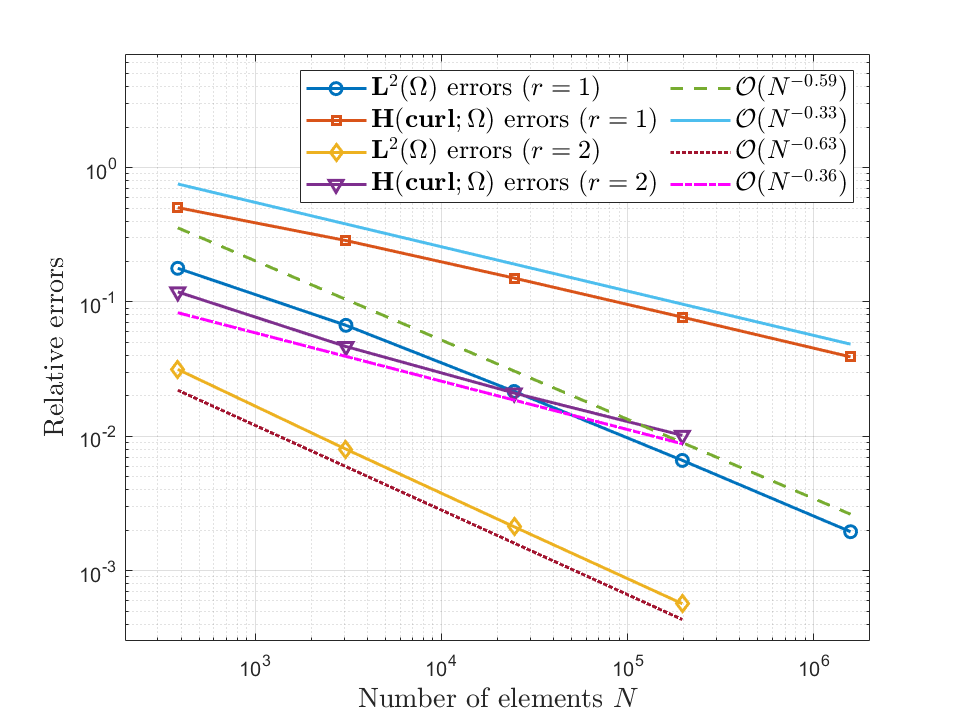}
\caption{Errors for the smallest positive eigenvalue and associated eigenfunction for the Maxwell eigenvalue 
problem on the unit cube domain, when $\mathbf{X}_{h, \tau0}$ is chosen as $\mathscr{V}_{h, \tau0}^r\cap \mathbf{H}_0^1(\Omega)$}
\label{fig:cube H01}
\end{figure}
Figure \ref{fig:cube 1} displays two log–log plots of the relative errors for 
the smallest eigenvalue approximation $\lambda_h$, and the relative $\mathbf{L}^2(\Omega)$ 
errors and $\mathbf{H}(\mathbf{curl};\Omega)$ 
errors \eqref{eq:L2-eigvec-error} 
for the corresponding eigenfunction approximation $\boldsymbol{\xi}_h$, 
versus the number $N$ of elements of the meshes, with $r=1$ and $r=2$.


Since the Maxwell eigenfunctions on the unit cube are smooth enough, 
the eigenpair approximations should have the full convergence order 
presented in Theorems \ref{thm:R-R_h-multi-eigenvalue-eigenvector-approximate_order} 
and \ref{R-R_h-eigenvector-approximate_order}. 
As can be seen from Figure \ref{fig:cube 1}, the full convergence orders for the eigenvalue errors and eigenfunction errors in the sense of $\mathbf{H}(\mathbf{curl};\Omega)$-norm are consistent with the theoretical results.
An interesting phenomenon observable in Figure \ref{fig:cube 1} is that, 
regardless of whether $r=1$ or $r=2$, the convergence rate of 
the discrete eigenfunction in the $\mathbf{L}^2(\Omega)$-norm has one order higher than that
in the $\mathbf{H}(\mathbf{curl};\Omega)$-norm,
which suggests that a duality argument (Aubin-Nitsche trick) might be applicable.
This superconvergence result currently lacks theoretical investigation and is our future work.

Meanwhile, we also conducted supplementary numerical experiments by selecting $\mathbf{X}_{h, \tau0}$ 
as $\mathscr{V}_{h, \tau0}^r\cap \mathbf{H}_0^1(\Omega)$, and presented two corresponding 
log–log plots in Figure \ref{fig:cube H01}. 
As mentioned in Section \ref{Section_Intro}, such a discrete scheme, which is constrained by 
the regularity of the potential function, fails to achieve full convergence order when $r=2$.

\subsection{Thick L-shaped domain}
In this example, the computational domain is the so-called thick L-shaped domain 
$\Omega={(-1, 1)^2\times(0, 1)}\setminus[0, 1]^3$, 
a non-convex Lipschitz polyhedron with an edge having an opening angle of $3\pi/2$. 
A regular structured mesh consisting of $1152$ tetrahedral elements 
with a diameter of $h=0.25$ which is shown in Figure \ref{fig:Thick L-shaped domain} 
serves as a coarse partition. 
\begin{figure}[ht]
\centering
\includegraphics[width=6cm,height=6cm]{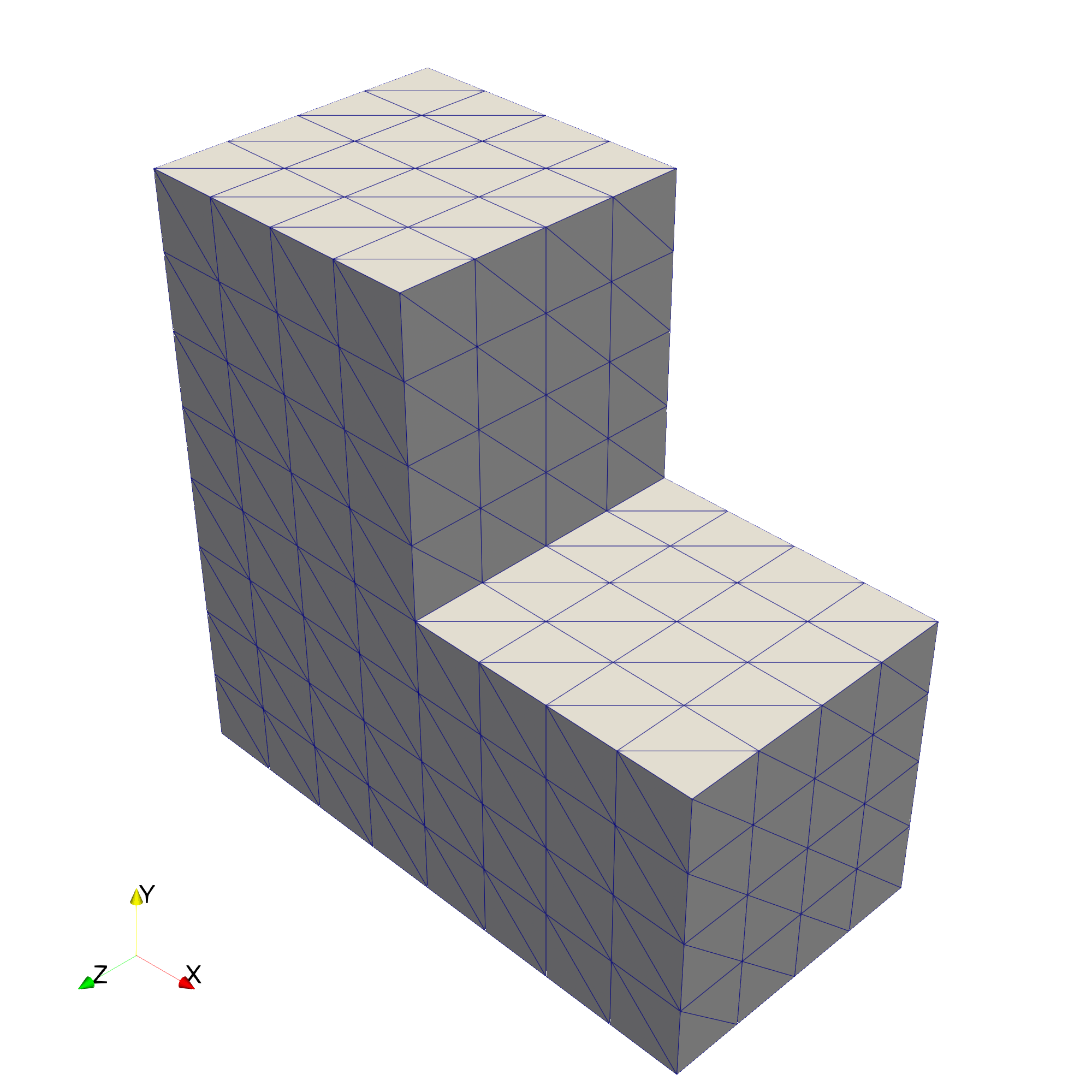}
\caption{The Thick L-shaped domain with the mesh size $h=0.25$}\label{fig:Thick L-shaped domain}
\end{figure}

Maxwell eigenfunctions on the thick L-shaped domain $\Omega$ may not belong 
to $\mathbf{H}^1(\Omega)$. However, according to Theorem \ref{maxwell-eigenvector-discrete-approximate}, 
there exists $s>1/2$ such that all Maxwell eigenfunctions 
belong to $\mathbf{H}^s(\Omega)$. In fact, due to the $3\pi/2$ 
opening angle at the non-convex edge of the thick L-shaped domain $\Omega$, 
the upper regularity bound $s$ for spaces $X_N(\Omega)$ 
and $X_T(\Omega)$ can be arbitrarily close to $2/3$ \cite{key18}.

If we denote the 2D L-shaped domain as $\Omega_2={(-1, 1)^2}\setminus[0, 1]^2$, 
then the eigenvalues of the Maxwell problem on the 
thick L-shaped domain $\Omega$ can be expressed either as the sum 
of a nonzero Maxwell eigenvalue on $\Omega_2$ and a Dirichlet 
eigenvalue on $[0, 1]$ (which takes the form $n^2\pi^2$ for all positive integers $n$), 
or as the sum of a Laplace eigenvalue on $\Omega_2$ and any Neumann eigenvalue 
on $[0, 1]$ (namely, $n^2\pi^2$ for all positive integers $n$). Specifically, 
the first three high-precision Laplace eigenvalues on $\Omega_2$ 
are $9.63972384$, $15.19725193$ and $19.73920880$, 
while the first five high-precision nonzero Maxwell eigenvalues 
on $\Omega_2$ are $1.47562182$, $3.53403137$, $9.86960440$, $9.86960440$, 
and $11.38947940$ \cite{dauge_benchmark}.

Figure \ref{fig:L 1} presents a log-log plot of the relative errors 
between the smallest eigenvalue approximation and the corresponding 
high-precision eigenvalue $9.63972384$ 
versus the number $N$ of elements of the meshes, with $r=1$ and $r=2$.
\begin{figure}[ht]
\centering
\includegraphics[width=0.6\textwidth]{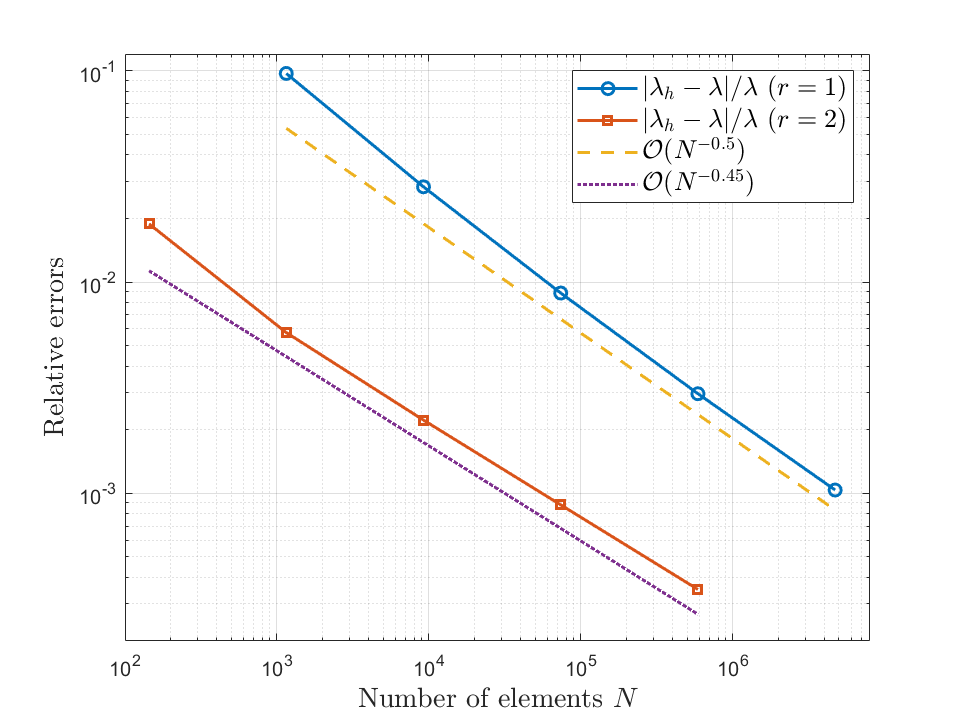}
\caption{Errors of the smallest eigenvalue approximation for the Maxwell 
eigenvalue problem on the thick L-shaped domain}\label{fig:L 1}
\end{figure}

Unlike the case where the computing domain is the unit cube, 
Figure \ref{fig:L 1} shows that the eigenvalue approximation has only a low 
slope of approximately $-0.45$ when $r=2$.
This indicates that the errors for the smallest eigenvalue 
computed on the sequence of uniformly refined meshes has following convergence order  
\begin{align}
|\lambda-\lambda_h| \approx \mathcal{O}\left(N^{-0.45}\right) = \mathcal{O}(h^{2t})
\end{align}
with $t \approx 2/3$, which aligns with the mentioned fact in Section 
\ref{Section_Pre} that the upper regularity bound for 
the space $\mathcal{H}_0$ on the thick L-shaped domain $\Omega$ 
can be arbitrarily close to $2/3$.

As a special case, we point out that on the thick L-shaped domain $\Omega$, 
the eigenfunctions corresponding to the 6th, 7th and 8th smallest eigenvalues 
$2\pi^2$ are smooth analytic functions.
In this scenario, the convergence order of the eigenvalue approximations returns 
to the optimal $\mathcal{O}(h^{2r})$, as shown in Figure \ref{fig:L 6}.
\begin{figure}[ht]
\centering
\includegraphics[width=0.6\textwidth]{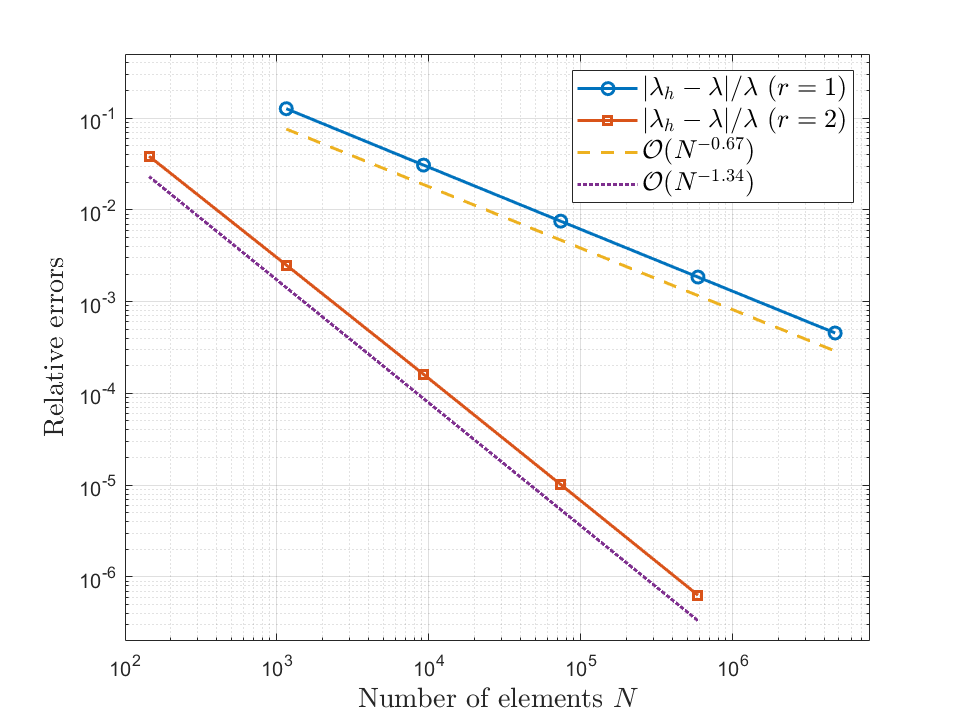}
\caption{Errors for the 6-th smallest eigenvalue of the Maxwell 
eigenvalue problem on the thick L-shaped domain}\label{fig:L 6}
\end{figure}

\subsection{Fichera corner}
We choose the so-called Fichera corner $\Omega=(-1, 1)^3\setminus[0, 1]^3$ 
as the domain for the final numerical example,  
which has one concave corner and three re-entrant edges.
The Fichera corner is the prototype of a domain where edge and corner singularities interact.
It is named after Gaetano Fichera who provided an approximation of $0.45418$ for the first corner singular 
exponent of the Fichera corner in 1993 \cite{dauge_benchmark}.
A regular structured mesh consisting of $2688$ tetrahedral elements with a diameter of $h=0.25$, 
which is shown in Figure \ref{fig:Fichera corner}, serves as a coarse partition in this example.
\begin{figure}[ht]
\centering
\includegraphics[width=0.4\textwidth]{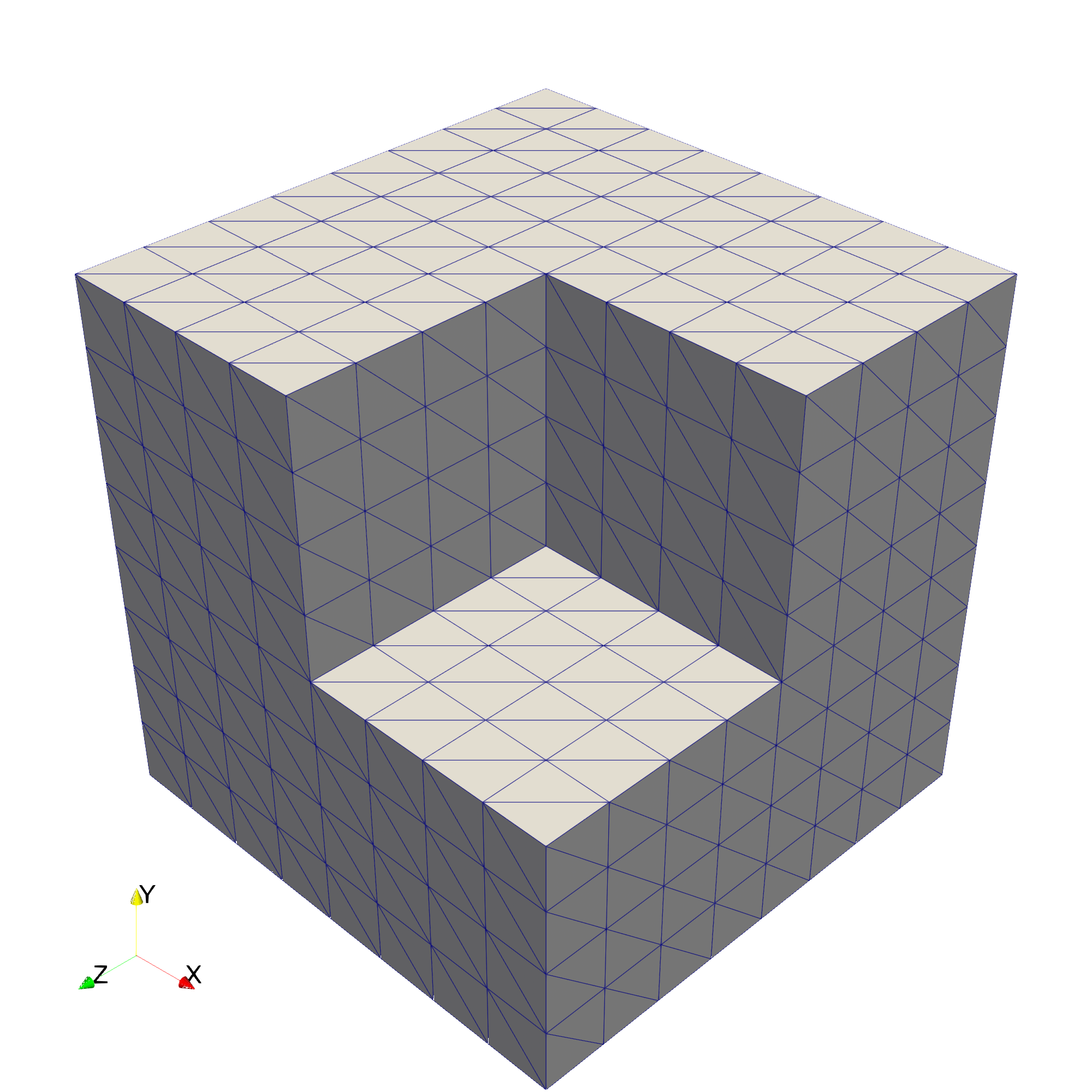}
\caption{The Fichera corner with the mesh size $h=0.25$}\label{fig:Fichera corner}
\end{figure}

As the exact Maxwell eigenvalues on the Fichera corner $\Omega$ are unknown, 
we use the computational results from Monique Dauge's website as a benchmark \cite{fichera_benchmark,dauge_benchmark}. 
These results, posted by Marc Durufl\'{e} in 2006, 
are listed in Table \ref{tab:Fichera corner benchmark}.
\begin{table}[ht]
\centering
\caption{Benchmark for the first eight Maxwell eigenvalues on the Fichera corner,
along with the number of hopefully reliable digits, which are marked in blue.}
\label{tab:Fichera corner benchmark}
\footnotesize
\begin{tabular}{|c|c|}
\hline
Eigenvalues & Number of hopefully reliable digits \\
\hline
\textcolor{blue}{3.219}87401386 & 4 \\
\textcolor{blue}{5.88041}891178 & 6 \\
\textcolor{blue}{5.88041}891780 & 6 \\
\textcolor{blue}{10.68}54921311 & 4 \\
\textcolor{blue}{10.693}7829409 & 5 \\
\textcolor{blue}{10.693}7829737 & 5 \\
\textcolor{blue}{12.3165}204656 & 6 \\
\textcolor{blue}{12.3165}204669 & 6 \\
\hline
\end{tabular}
\end{table}

Tables \ref{tab:fichera-eigenvalue-P1P2} and \ref{tab:fichera-eigenvalue-P2P3} present 
the first eight eigenvalue approximations computed on the Fichera corner $\Omega$ 
for the cases of $r=1$ and $r=2$, respectively, across a sequence of uniformly refined 
meshes with different mesh sizes $h$.
The numerical results clearly indicate that on the finest mesh with mesh size $h=1/{32}$, 
the initial few digits of the approximated eigenvalues agree with the reference ones provided 
in Table \ref{tab:Fichera corner benchmark}.
\begin{table}[ht]
\centering
\caption{The first eight smallest eigenvalue approximation for the case of finite element spaces $\mathbf{X}_{h, \tau0}=\mathscr{V}_{h, \tau0}^1$ and $Y_{h, 0}=\mathscr{L}_{h, 0}^2$ on uniformly refined meshes 
with different mesh sizes $h$ on the Fichera corner.}\label{tab:fichera-eigenvalue-P1P2}
\footnotesize
\begin{tabular}{|cccccc|}
\hline 
$h$ & $\frac{1}{2}$ & $\frac{1}{4}$ & $\frac{1}{8}$ & $\frac{1}{16}$ & $\frac{1}{32}$ \\
$N$ & $336$ & $2688$ & $21504$ & $172032$ & $1376256$ \\
\hline
$\lambda_{1 h}$ & $5.64529995$ & $3.85659662$ & $3.38194201$ & $3.26129321$ & $\textcolor{blue}{3.2}3077689$ \\
$\lambda_{2 h}$ & $7.29620100$ & $6.24130581$ & $5.97607321$ & $5.90624137$ & $\textcolor{blue}{5.88}758575$ \\
$\lambda_{3 h}$ & $7.62144639$ & $6.34392910$ & $6.00277609$ & $5.91264091$ & $\textcolor{blue}{5.88}910509$ \\
$\lambda_{4 h}$ & $14.68192246$ & $11.70775578$ & $10.97828389$ & $10.77940895$ & $\textcolor{blue}{10}.71822904$ \\
$\lambda_{5 h}$ & $14.82866959$ & $11.74242843$ & $10.98644759$ & $10.78082670$ & $\textcolor{blue}{10}.72108140$ \\
$\lambda_{6 h}$ & $16.33691886$ & $12.02907743$ & $11.04776056$ & $10.79202622$ & $\textcolor{blue}{10}.72195707$ \\
$\lambda_{7 h}$ & $17.37060702$ & $13.85458768$ & $12.73951707$ & $12.43209312$ & $\textcolor{blue}{12.3}4867492$ \\
$\lambda_{8 h}$ & $18.19300319$ & $14.00207790$ & $12.78642002$ & $12.44380619$ & $\textcolor{blue}{12.3}5142510$ \\
\hline
\end{tabular}
\end{table}
\begin{table}[ht]
\centering
\caption{The first eight smallest eigenvalue approximation for the case of 
finite element spaces $\mathbf{X}_{h, \tau0}=\mathscr{V}_{h, \tau0}^2$ 
and $Y_{h, 0}=\mathscr{L}_{h, 0}^3$ on uniformly refined meshes with 
different mesh sizes $h$ on the Fichera corner.}\label{tab:fichera-eigenvalue-P2P3}
\footnotesize
\begin{tabular}{|cccccc|}
\hline 
$h$ & $\frac{1}{2}$ & $\frac{1}{4}$ & $\frac{1}{8}$ & $\frac{1}{16}$ & $\frac{1}{32}$ \\
$N$ & $336$ & $2688$ & $21504$ & $172032$ & $1376256$ \\
\hline
$\lambda_{1 h}$ & $3.22302985$ & $3.21452284$ & $3.21762012$ & $3.21903789$ & $\textcolor{blue}{3.219}56986$ \\
$\lambda_{2 h}$ & $5.91425978$ & $5.88621242$ & $5.88222728$ & $5.88111123$ & $\textcolor{blue}{5.880}69236$ \\
$\lambda_{3 h}$ & $5.91747984$ & $5.88627636$ & $5.88224600$ & $5.88111399$ & $\textcolor{blue}{5.880}69345$ \\
$\lambda_{4 h}$ & $10.87682102$ & $10.73312211$ & $10.70276087$ & $10.69230970$ & $\textcolor{blue}{10.68}819876$\\
$\lambda_{5 h}$ & $10.87787382$ & $10.74241142$ & $10.71158169$ & $10.70077574$ & $\textcolor{blue}{10.69}654641$\\
$\lambda_{6 h}$ & $10.91711684$ & $10.74324978$ & $10.71168815$ & $10.70080437$ & $\textcolor{blue}{10.69}654992$\\
$\lambda_{7 h}$ & $12.54313405$ & $12.34451116$ & $12.32305284$ & $12.31881040$ & $\textcolor{blue}{12.31}739501$\\
$\lambda_{8 h}$ & $12.55191724$ & $12.34495599$ & $12.32328059$ & $12.31883317$ & $\textcolor{blue}{12.31}740448$\\
\hline
\end{tabular}
\end{table}

The first Maxwell eigenvalue $\lambda_1$ is simple, and the corresponding 
eigenfunction has the most singular part possible, 
manifested as the Fichera exponent observable at the corner.
Taking $\lambda_1\approx3.2199$ as reference, Figure \ref{fig:fichera 1} presents 
relative errors for the smallest eigenvalue approximations 
for the cases of $r=1$ and $r=2$.
\begin{figure}[ht]
\centering
\includegraphics[width=0.6\textwidth]{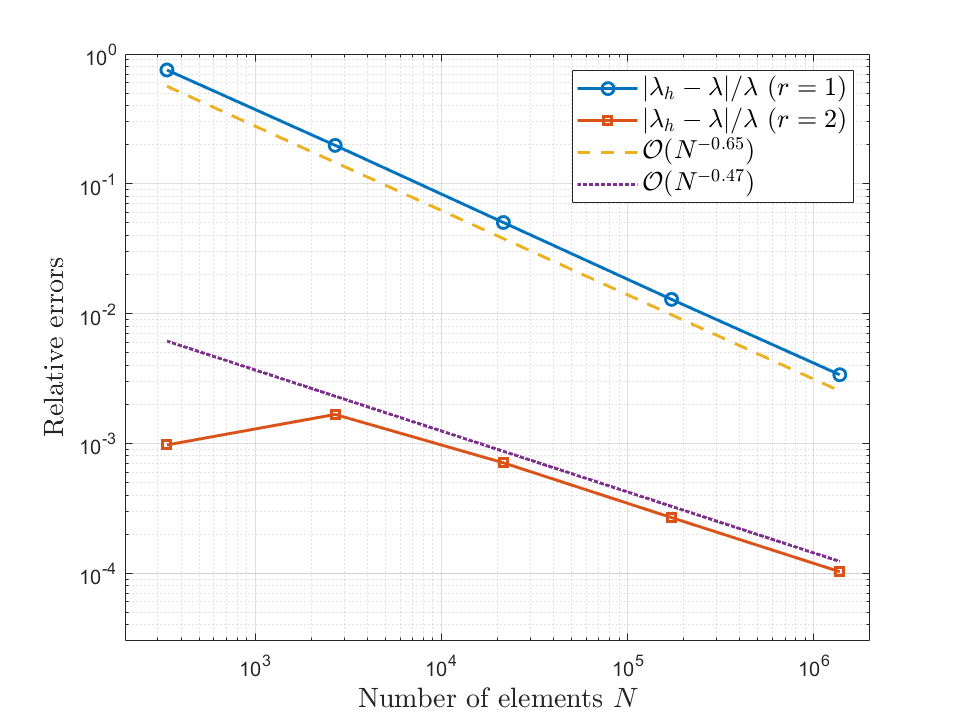}
\caption{Errors for the smallest eigenvalue approximations of the Maxwell eigenvalue problem 
on the Fichera corner.}\label{fig:fichera 1}
\end{figure}

Figure \ref{fig:fichera 1} shows the slope of the error curve corresponding to the case of $r=1$ 
is approximately $-2/3$, while the slope for $r=2$ is approximately $-0.47$.
The approximate $\mathcal{O}(h^2)$ convergence in eigenvalues at $r=1$ is attributed to the fact that the discrete eigenvalues have not yet sufficiently approached the exact one.
Due to computational platform limitations, the results on 
fine enough quasi-uniform meshes have not yet been obtained.
Besides, the slope of the error curve corresponding to the case of $r=2$ indicates 
that the smallest eigenvalue approximations have the following convergence order 
\begin{align*}
|\lambda-\lambda_h| \approx \mathcal{O}(N^{-0.47}) = \mathcal{O}(h^{1.41}).
\end{align*}

\section{Conclusions}
With the novel regular decomposition technique, 
we decompose the space $\mathbf{H}_0^s(\mathbf{curl}; \Omega)$ 
into the sum of a vector potential space and the gradient of a scalar space, both possessing higher regularity. 
Based on this regular decomposition, a type of numerical method 
using continuous Lagrange finite elements for solving Maxwell eigenvalue problems is proposed and analyzed in this paper. 

This new regular decomposition here enables the use of standard continuous finite element spaces, circumventing the implementation complexities associated with high-order edge elements or specialized meshes.
Furthermore, we establish a rigorous convergence analysis, demonstrating that the proposed scheme yields optimal convergence rates for eigenpair approximations and eliminates spurious zero modes.
Numerical experiments on various benchmark domains have validated the efficacy of the proposed method, confirming the theoretical predictions.
Future work will focus on developing efficient eigensolvers specifically tailored to the algebraic eigenvalue problems arising from the discretization scheme in this paper.



\printbibliography

\end{document}